\documentclass[aap]{imsart}
\usepackage{amsthm,amsmath, amsfonts, amssymb}
\usepackage{cite}
\RequirePackage[colorlinks,citecolor=blue,urlcolor=blue]{hyperref}

\startlocaldefs

\numberwithin{equation}{section}
\theoremstyle{plain}
\newtheorem{lemma}{Lemma}[section]
\newtheorem{thm}{Theorem}[section]
\newtheorem{defn}{Definition}[section]
\newtheorem{prop}{Proposition}[section]

\newtheorem{remark}{Remark}[section]

\DeclareMathOperator*{\argmax}{arg\,max}

\newcommand{\cA}{\mathcal{A}}

\newcommand{\cC}{\mathcal{C}}
 \newcommand{\cD}{\mathcal{D}}

\newcommand{\cJ}{\mathcal{J}}

\newcommand{\cU}{\mathcal{U}}
\newcommand{\cV}{\mathcal{V}}
\newcommand{\cW}{\mathcal{W}}

\newcommand{\cY}{\mathcal{Y}}


\newcommand{\Del}{{\Delta}}

\newcommand{\pa}{{\partial}}

\newcommand{\ignore}[1]{}

\def \a{\alpha}
\def \b{\beta}

\def \d{\delta}
\def \1{\mathbf 1}

\def \E{\mathbb{E}}

\def \N{\mathbb{N}}

\def \P{\mathbb{P}}

\def \R{\mathbb{R}}

\newcommand{\comm}[1]{}
\DeclareMathOperator{\arctanh}{arctanh}
\DeclareMathOperator{\csch}{csch}
\makeatletter
\def\@setcopyright{}
\def\serieslogo@{}
\makeatother

\endlocaldefs

\begin{document}

\begin{frontmatter}

\title{On the asymptotic optimality of the comb strategy for prediction with expert advice}

 \author{\fnms{Erhan} \snm{Bayraktar}\corref{}\ead[label=e1]{erhan@umich.edu}\thanksref{ t1}}
  \thankstext{t1}{Supported in part by the National Science Foundation (NSF) under grant DMS-1613170 and the Susan M. Smith Professorship.} 
\address{erhan@umich.edu\\Department of Mathematics\\ 530 Church Street, Ann Arbor MI 48109}
 \affiliation{University of Michigan}

\and 

\author{\fnms{Ibrahim} \snm{Ekren}\ead[label=e2]{iekren@fsu.edu}}
 \address{iekren@fsu.edu\\ Department of Mathematics\\ 1017 Academic Way Tallahassee, FL 32306}
 \affiliation{Florida State University}
 \and 
 \author{\fnms{Yili} \snm{Zhang}\ead[label=e1]{zhyili@umich.edu}}
 \address{zhyili@umich.edu \\ Department of Mathematics\\ 530 Church Street, Ann Arbor MI 48109}
 \affiliation{University of Michigan}


\begin{abstract}
For the problem of prediction with expert advice in the adversarial setting with geometric stopping, we compute the exact leading order expansion for the long time behavior of the value function. Then, we use this expansion to prove that as conjectured in Gravin, Peres and Sivan \cite{MR3478415}, the comb strategies are indeed asymptotically optimal for the adversary in the case of $4$ experts.
\end{abstract}

\begin{keyword}[class=MSC]
\kwd{68T05}
\kwd{35L02}
\kwd{35J60}
\end{keyword}

\begin{keyword}
\kwd{machine learning}
\kwd{expert advice framework}
\kwd{asymptotic expansion}
\kwd{reflected Brownian motion}
\kwd{system of hyperbolic equations}
\kwd{regret minimization.}
\end{keyword}

\end{frontmatter}

\section{Introduction} 
In this paper we use PDE tools to analyze one of the classical problems in machine learning, namely prediction with expert advice. In this framework, a game is played between a player and the nature (also called the adversary in the learning literature). At each time step, given past information, the player has to choose an expert among $N>0$ experts. 
Simultaneously the nature chooses a set of winning experts. Then, both choices are announced. If the player chooses an expert belonging to the set of winning experts, the player also wins.
The objective of the player is to minimize his regret with respect to the best performing expert, i.e., minimize
$$R_T=\max_{i} G^i_T-G_T$$
where $G^i_T$ is the total gain of the expert $i$ and $G_T$ is the gain of the player at the final time. 
The objective of the nature is to choose the set of winning experts to maximize the regret of the player. This problem that has been extensively studied in learning theory \cite{covers,cesa-bianchi-lucosi,rakhlin-shanir-sritharan, MR3478415 , haussler, Littlestone, vovk} can also be seen as a discrete time and discrete space robust utility maximization problem similar to \cite{nutz} for a particular choice of utility function. 

For the case of 2 experts, the optimal strategy for the adversary was first described by Cover \cite{covers} in the 1960's. Recently, for the case with 3 experts, using an ansatz of exponential type for the value function of the game, Gravin et al. showed in \cite{MR3478415} that the so called comb strategy,  the strategy that consists of choosing the leading and the third leading expert by the nature, is optimal.  However, the exponential type ansatz for the value of the game does not generalize to larger number of experts.

In this paper, we follow the setting of \cite{MR3478415}, where the maturity of the game is a geometric random variable with parameter $\d>0$ and study the game where both the player and nature can use randomized strategies. In this framework, we prove 2 conjectures stated in \cite{MR3478415} for the game with $N=4$ experts. We use tools from stochastic analysis and PDE theory to give an explicit expansion of the value function of the game for small $\d>0$, which corresponds to long time asymptotics. In Theorem \ref{thm:main}, this expansion allows us to prove that the value of the game, also called best regret, indeed grows as  $\frac{\pi}{4\sqrt{2\d}}$ as conjectured in \cite{MR3478415}.

The proof of this result is achieved in two steps. This first step can be found in \cite{MR3768426}, where using tools from viscosity theory the author shows that the rescaled value function \eqref{eq:rescvalue} solves the elliptic PDE \eqref{eq:pdegeo}. The second step, which is the main contribution of this paper, is to explicitly solve this PDE for the case of 4 experts. In order to find this expression, we use the conjectured optimal strategy in \cite{MR3478415}, and relate the value function of the control problem \eqref{eq:fk} to an expectation of a functional of an obliquely reflected Brownian motion; see in particular Lemma~\ref{lem:reflecBM}. This expression is a discounted expected value of the local time that measures the number of times the best two experts' gains cross each other.  
Then, using appropriate differentiation of the dynamic programming equation \eqref{eq:expV}, we characterize the value of the expectation on two ``opposite" faces of the domain of reflection by a system of hyperbolic PDE \eqref{syst:hf} and \eqref{syst:hr}.
Then, we solve this system of hyperbolic PDE to explicitly compute the value for the conjectured control at the boundary, which then leads to the value in the whole domain. 
Finally,  in Section~\ref{s.proof}, we check that the value given for the conjectured control solve the nonlinear PDE \eqref{eq:pdegeo}, which is a simple verification argument proving the optimality of comb strategy, the second conjecture in \cite{MR3478415}. See Theorem~\ref{thm:asyopt}. The direct proof of the verification argument is quite tedious. We came up with a method that relies on Proposition \ref{prop:comp}, which is a type of maximum principle for the system of hyperbolic equations \eqref{eq:Fsuper}.   

From the perspective of control theory, we note that the setting of \cite{MR3478415} is in fact similar to the weak formulation (or feedback/closed loop formulation) of zero-sum games in the sense of \cite{pham-zhang} (see also \cite{MR3553921}) where the player and the nature observe the same source of information, i.e. the path of the gains of the experts and the player. One can also state the game in a Elliott-Kalton sense, see e.g. \cite{MR997385}, in which similarly to \cite{kohnserfaty}, before taking its decision, the nature learns the choice of the player. These two formulations generally lead to different values; see Remark 4.2 in \cite{MR3553921}.

Our expansion is in accordance with well known results in prediction problems. Indeed, it is known that in the long run, there is an upper bound for the value of regret minimization problems that grows at most as $\sqrt{\frac{T\log(N)}{2}}$ which is achieved by the so-called multiplicative weight algorithms \cite{cesa-bianchi-lucosi}. In this paper, we compute the exact scaling for the geometric stopping problem which also allows us to directly provide explicit algorithms for both the player and the nature.

The rest of the paper is organized as follows. In Section \ref{s.notation} we introduce our notation and define the value function of the regret minimization problem. In Section \ref{s.result}, we give the main results of the paper. This result is proven in Section \ref{s.proof}. The Sections \ref{s.oblique} and \ref{s.character} are there to provide the methodology used in finding the explicit solution \eqref{eq:u}. 

\section{Statement of the problem}\label{s.notation}
\comm{
We consider the following online optimization problem: fix $N\geq 2$ be the number of experts. At each time step t, nature decides a probability distribution $\cD_t$ on $P(N)$, the power set of $\{1,\ldots N\}$, and assign a gain $g_t^i \in [0,1]$ for each expert $i$ following this distribution. The player decides the probability distribution $\cA_t$ over the $\{1,\ldots, N\}$, experts, and then pick one of the experts following this distribution. Then the gain of the experts at this time $g_t^1\ldots,g_t^N$ is reviled of the player,and the gain of the player $g_t \in [0,1]$is the same as the chosen expert. The stopping time $T$ of this game is determined by a geometric random variable with parameter $\delta$, that is, $P(T=t)=\delta(1-\delta)^t$. The regret of this game at time $t$ is defined as $R_t(\cA_\cdot,\cD_\cdot)= \E^{\cA_\cdot,\cD_\cdot}[\max\limits_{i\in [N]}G_t^i-G_t]$, where $G_t^i$ is the cumulative gain for expert $i$ at time t, and $G_t$ is the cumulative gain for the player at time t. The regret of the game is defined as  $R_\delta(\cA_\cdot,\cD_\cdot)= \sum\limits_{t=0}^{\infty}\delta(1-\delta)^t\E^{\cA_\cdot,\cD_\cdot}[\max\limits_{i\in [N]}G_t^i-G_t]$ The goal of the player is to minimize the regret of the game while the goal for the nature is to maximize it. This is a zero-sum two player game.\\

It has been showed that without lost of generality, we can assume that the strategies of the nature and the player are both oblivious, then the distribution $\cD_t$ and $\cA_t$ is a function only of $\{G^i_s\}_{s=1,\ldots,t-1}$.


}

We fix $N\geq 2$ and denote by $U$ the set of probability measures on $\{1,\ldots, N\}$ and by $V$ the set of probability measures on $P(N)$, the power set of $\{1,\ldots N\}$. These sets of probability measures are in fact in bijection with respectively $N$ and $2^N$ dimensional unit simplexes. We denote by $\{e_i\}_{i=\{1,\ldots, N\}}$ the canonical basis of $\R^N$ and for $J\in P(N)$, $e_J$ stands for 
$e_J:=\sum_{j\in J}e_j$. Similarly to \cite{MR3055258}, for all $x\in \R^N$, we denote by $\{x^{(i)}\}_{i=1,\ldots, N}$ the ranked coordinates of $x$ with 
$$x^{(1)}\leq x^{(2)}\leq \ldots\leq x^{(N)},$$ and define the function 
\begin{align}\label{def:phi}
\Phi:x\in\R^N\mapsto \max_{i}x_i=x^{(N)}.
\end{align}

We assume that a player and the nature interact through the evolution of the state of $N$ experts. 
 At time $t\in \N$, the state of the game in hand is described by $\{G^i_s\}_{s=1,\ldots,t-1}$, the history of the gains of each expert $i=1,\ldots N$ and $\{G_s\}_{s=1,\ldots, t-1}$ the history of the gains of the player. 
At time step $t\in \N$, observing $\{(G^{i}_s,G_s):s=0,\ldots, t-1\}$, simultaneously, the player chooses $I_t\in \{1,\ldots,N\}$ and the nature chooses $J_t\in P(N)$. The gain of each expert chosen by the nature increases by $1$ i.e.,
\begin{align*}
G^i_{t}&=G^{i}_{t-1}+1\mbox{ if }i\in J_t \\
G^i_{t}&=G^{i}_{t-1}\mbox{ if }i\notin J_t .
\end{align*}
If the player also chooses an expert chosen by the nature, then the gain of player also increases i.e.,
\begin{align*}
G_{t}&=G_{t-1}+1\mbox{ if }I_t\in J_t \\
G_t&=G_{t-1}\mbox{ if }I_t\notin J_t .
\end{align*}
The regret of the player at time $t\in \N$ is defined as 
\begin{align*}
R_t:=\max_{i=1,\ldots,N}G^i_t-G_t.
\end{align*}
Let $T$ denote the random maturity of the problem. We assume that $T$ is a geometric random variable with parameter $\d>0$. 

We now convexify the problem by assuming that instead of choosing deterministic $J_t$ and $I_t$, the nature and the player choose randomized strategies. At time $t$, the player chooses a probability distribution $\a_t\in U$ and the nature chooses $\b_t\in V$ that may depend on the observation $\{(G^{i}_s,G_s):s=0,\ldots t-1,\, i=1,\ldots ,N\}$. We denote by $\cU$ the set of such sequences $\{\a_t\}_{t\in \N}$ and by $\cV$ the set of such sequences $\{\b_t\}$. With some notational abuse, we denote by $I_t\in \{1,\ldots N\}$ the random variable with distribution $\a_t$ and $J_t\in P(N)$ the random variable with distribution $\b_t$.  

The objective of the player is to minimize his expected regret at time $T$ and the objective of the nature is to maximize the regret of the player. Hence we have a zero sum game with the lower and the upper value for the game
\begin{align*}
\sup_{\b\in\cV}\inf_{\a\in \cU} \E^{\a,\b}\left[R_T\right]\leq\inf_{\a\in \cU}\sup_{\b\in\cV}  \E^{\a,\b}\left[R_T\right]
\end{align*}
where $\E^{\a,\b}$ is the probability distribution under which we evaluate the regret given the controls $\a=\{\a_t\}$ and $\b=\{\b_t\}$. We denote by
\begin{align}\label{eq:X}
X_t:=(X^1_t,\ldots, X_t^N):=(G^1_t-G_t,\ldots, G^N_t-G_t)
\end{align}
the difference between the gain of the player and the experts. The following result, which can be found in \cite{MR3478415,MR3768426}, establishes the existence of a value for this discrete game.  \begin{prop}
The game has a value, i.e.,
\begin{align}\label{eq:defVd}
V^\d(X_0):=\sup_{\b\in\cV}\inf_{\a\in \cU} \E^{\a,\b}\left[R_T\right]=\inf_{\a\in \cU}\sup_{\b\in\cV}  \E^{\a,\b}\left[R_T\right].
\end{align}
There exists $M>0$ independent of $\d$ such that for all $\d>0$ and $x\in \R^N$ we have that
\begin{align*}
|V^\d(x)-\Phi(x)|\leq \frac{M}{\sqrt{\d}}.
\end{align*}
Additionally, $V^\d$ satisfies the following dynamic programming principle
\begin{align*}
& V^\d(x)=\delta \Phi(x)+(1-\d)\inf_{\a\in U} \sup_{\beta\in V}\sum_{J} \beta_J \left( V^\d\left(x+e_J\right)-\a(J)\right).
\end{align*}

\end{prop}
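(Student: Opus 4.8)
The plan is to regard the problem as a stationary discounted zero‑sum stochastic game driven by the controlled chain $X$ of \eqref{eq:X}, and to prove the three assertions separately: existence of the value together with the dynamic programming principle; the elementary lower bound $V^\d\ge\Phi$; and the quantitative upper bound. First I would use that $T$ is geometric with parameter $\d$ and independent of the controls to write $\E^{\a,\b}[R_T]=\sum_{t\ge0}\d(1-\d)^t\,\E^{\a,\b}[R_t]$, which, since $R_t=\max_iG^i_t-G_t=\Phi(X_t)$, exhibits the game as one with running payoff $\d\,\Phi(X_t)$ and discount factor $1-\d$. Two structural facts are worth recording at this point: the increments of $X$ do not depend on $X_0$, so (together with the diagonal covariance of $\Phi$) the function $V^\d$ is covariant under $x\mapsto x+c(1,\dots,1)$, which is exactly what lets the DPP be stated purely in terms of $V^\d(x+e_J)$; and, for any continuation value $W$, the one‑step game with payoff $(\a,\b)\mapsto\d\,\Phi(x)+(1-\d)\sum_J\b_J(W(x+e_J)-\a(J))$ is affine in $\b\in V$ for fixed $\a$ and affine in $\a\in U$ for fixed $\b$, because $\a(J)=\sum_{i\in J}\a_i$ (the $\a$‑probability that the player's expert is in $J$) is linear in $\a$; since $U$ and $V$ are compact convex finite‑dimensional simplices, the minimax theorem endows this one‑step game with a value.

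For the DPP and the existence of $V^\d$, I would introduce the operator $\cT:W\mapsto\d\,\Phi+(1-\d)\inf_{\a\in U}\sup_{\b\in V}\sum_J\b_J\bigl(W(\cdot+e_J)-\a(J)\bigr)$. Using $0\le\Phi(x+e_J)-\Phi(x)\le1$ and $\a(J)\in[0,1]$, one checks that $\cT$ maps $\{W:\|W-\Phi\|_\infty<\infty\}$ into itself and is a $(1-\d)$‑contraction there for the sup norm of $W-\Phi$; by Banach's fixed point theorem there is a unique such $V^\d$ with $V^\d=\cT V^\d$, which is the asserted dynamic programming principle. To identify this fixed point with the game value, I would carry out the usual verification argument: a measurable selection of $\e$‑optimizers in the one‑step games yields Markov strategies for the two players, and telescoping the identity $V^\d=\cT V^\d$ along the play — which is legitimate because the survival probability $(1-\d)^t$ decays faster than the at‑most‑linear growth of $|V^\d(X_t)|$, while the accumulated per‑stage discrepancies sum to $O(\e/\d)$ — shows that the nature's Markov strategy secures at least $V^\d(X_0)-\e$ against every adaptive player, and that the player's Markov strategy guarantees at most $V^\d(X_0)+\e$ against every adaptive nature; letting $\e\downarrow0$ gives \eqref{eq:defVd}.

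The lower bound $V^\d\ge\Phi$ is immediate: if the nature always picks $J_t=\{1,\dots,N\}$ then every expert and the player gain $1$ at each step, so $X_t\equiv X_0$ and $R_T=\Phi(X_0)$ almost surely whatever the player does. For the upper bound I would have the player discard the game structure and run the exponential‑weights (Hedge) algorithm with a fixed learning rate $\eta$ on the incremental expert gains; the classical regret estimate then gives, pathwise, $R_t\le\Phi(x)+\tfrac{\log N}{\eta}+\tfrac{\eta t}{8}$, hence $V^\d(x)\le\Phi(x)+\tfrac{\log N}{\eta}+\tfrac{\eta}{8}\E[T]=\Phi(x)+\tfrac{\log N}{\eta}+\tfrac{\eta(1-\d)}{8\d}$, and optimizing over $\eta$ yields $V^\d(x)-\Phi(x)\le\sqrt{(1-\d)\log N/(2\d)}\le M/\sqrt\d$ with $M=\sqrt{(\log N)/2}$. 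Combined with $V^\d\ge\Phi$ this is the claimed estimate.

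The step I expect to be the real obstacle is the interchange of $\inf_\a$ and $\sup_\b$ for the full multi‑stage game, that is, the first equality in \eqref{eq:defVd}: unlike for the one‑step game this is not automatic, and it rests on the finiteness of the per‑stage action sets (so that the minimax theorem can be applied stage by stage) together with the memorylessness of the geometric horizon (which makes the game stationary and $\cT$ a contraction); the part demanding care is the bookkeeping that identifies the Banach fixed point with both the upper and the lower value and that certifies Markov strategies lose nothing against general adaptive opponents. All three assertions are established in \cite{MR3478415,MR3768426}.
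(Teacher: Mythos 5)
The paper's own proof is essentially a citation: the value's existence is ascribed to \cite{MR3478415} (a direct application of the minimax theorem to the convex compact multi-stage strategy sets) and the dynamic programming principle together with the bound $|V^\delta-\Phi|\le M/\sqrt{\delta}$ to \cite{MR3768426}. Your proposal supplies a genuinely self-contained route through the standard discounted-game machinery: introduce the Bellman operator $\mathcal{T}$, show it is a $(1-\delta)$-contraction in the metric $\|W-\Phi\|_\infty$, take the unique fixed point, and run a verification argument with $\varepsilon$-optimal Markov strategies to pin both the lower and upper multi-stage values to that fixed point. The lower bound $V^\delta\ge\Phi$ (nature always picking all experts, so $X_t\equiv X_0$) and the upper bound via multiplicative weights tuned at $\eta\sim\sqrt{\delta}$ are both correct and give the stated $O(\delta^{-1/2})$ width. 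What this route buys is a constructive handle on the DPP and on explicit $\varepsilon$-strategies; the cost is the bookkeeping in the verification, where the per-stage slippage $\varepsilon(1-\delta)^t$ accumulates to $O(\varepsilon/\delta)$ and must be killed by sending $\varepsilon\downarrow 0$ at fixed $\delta$. Two points you should make explicit in a polished write-up: (i) the Hedge estimate you call ``pathwise'' is pathwise in the experts' gains but averaged over the player's internal randomization --- which is exactly what $\E^{\alpha,\beta}$ requires, so it is correct but worth flagging; and (ii) the DPP in the stated form rests on the covariance $V^\delta(x+c\mathbf{1})=V^\delta(x)+c$, which you correctly identify as what collapses the two continuation values $V^\delta(x+e_J)$ and $V^\delta(x+e_J-\mathbf{1})$ into the single expression $V^\delta(x+e_J)-\alpha(J)$.
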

\begin{proof}
The existence of the value is a direct consequence of the Minimax Theorem and is provided in \cite{MR3478415}. The proof of the rest of the Proposition can be found in \cite{MR3768426}. In particular the uniform bound in $x$ is a consequence of \cite[Theorem 3]{MR3768426}.
\comm{
let $\Del x $ denote the change of $x$ in one round of the game, $\Del x = g_t^i-g_t$ let $\Phi(x) = \max\limits_{i=1,\ldots,N} x^i$.\\
If we start this game with a initial regret $x_1,\ldots x_N$, the probability of the game end immediately is $\delta$, in this case, the regret of the whole game is $\max_{i=1,\ldots,N}$; the probability that the game will continue with at least one round is $(1-\delta)$, and in this case, the regret of the whole game will be the min-max of the regret for all possible next state. Therefore ,we can formulate the dynamic programming principle for this game as:
\begin{align*}
    V^\d(x_1,\ldots x_N)&=\delta \Phi (x)+ (1-\delta) \min_{nature}\max_{player} \E[V^\d(x+\Del x)]\\
    &=\delta \Phi (x)+ (1-\delta) \min_{nature}\max_{player} \beta_J \left((1-\a_J) \underline V^\d\left(x+e_J\right)+\a_J\underline V^\d\left(x+e_J-e\right)\right)
\end{align*}
Since for each round of the game, the regret can increase at most 1, and for any $x$ and $x'$ we can find a path through the lattice in $R^N$ of length $\left\lVert x-x'\right\rVert_0$, and this means that $| V^\d(x)-V^\d(x')| \leq \left\lVert x-x'\right\rVert_0 $ as desired.\\
The nature chooses $J\in P(N)$ and the player chooses $i=1,\ldots, N$. 
If $i \notin J$ then the state of regrets passes from $x$ to $x+e_J$ where $e_J=\sum_{j\in J}e_j$ is the vector such that $e_J(j)=\1_{j\in J}$. 
If  $i\in J$ then the state of regrets passes from $x$ to $x+e_J-e$ where $e$ is the vector of ones. let $\Del x $ denote the change of $x$ in one round of the game, $\Del x = e_J-e$.\\
We note that the nature needs to make a trade off between increasing the size of $J$ which would increase the regret more but makes it easier for the player to guess which expert to choose. 
Additionally the regret at state $x-e$ is one less than the regret at state $x$. 
Hence, denoting 
$\a_J:=\sum_{i\in J}\a_i$, the dynamic programming equation becomes
\begin{align}\label{DPP2}
&\underline V^\d(x)=\delta \Phi(x)+(1-\d)\inf_{\a} \sup_{\beta}\sum_{J} \beta_J \left((1-\a_J) \underline V^\d\left(x+e_J\right)+\a_J\underline V^\d\left(x+e_J-e\right)\right)\\
&=\delta \Phi(x)+(1-\d)\inf_{\a} \sup_{\beta}\sum_{J} \beta_J \left(\underline V^\d\left(x+e_J\right)-\a_J\right)\notag.
\end{align}
We now give a characerisation of the $\lim_\d u^\d$ and a stochastic representation

To show that this game has a value, we need to show that \begin{align}\label{eq:gamediscrete}
\sup_{\cD_\cdot}\inf_{\cA_\cdot} \E^{\cA_\cdot,\cD_\cdot}\left[R_T\right]=\inf_{\cA_\cdot}\sup_{\cD_\cdot}  \E^{\cA_\cdot,\cD_\cdot}\left[R_T\right]
\end{align}holds.\\

Let $\cD$ and $\cA$ denote the space of the strategies for nature and player. Since we are considering mixed strategies $\cD$ and $\cA$ are convex and compact. Moreover, for every $D_1,D_2 \in \cD$, for any $A_1, A_2 \in \cA$, we have that 
\comm{
\begin{align*}
    R_\delta(\lambda A_{1\cdot}+(1-\lambda) A_{2\cdot},D_{1\cdot})= \lambda R_\delta(A_{1\cdot},D_{1\cdot})+(1-\lambda)R_\delta( A_{2\cdot},D_{1\cdot})\\ 
    R_\delta(A_{1\cdot},\lambda D_{1\cdot}+(1-\lambda) D_{2\cdot})= \lambda R_\delta(A_{1\cdot},D_{1\cdot})+(1-\lambda)R_\delta( A_{2\cdot},D_{2\cdot})
\end{align*}
}
\begin{align*}
   \E^{\lambda A_{1\cdot}+(1-\lambda) A_{2\cdot},D_{1\cdot}}[ R_T]= \lambda \E^{A_{1\cdot},D_{1\cdot}} [R_T]+(1-\lambda)\E^{A_{2\cdot},D_{1\cdot}}[R_T]\\ 
    \E^{A_{1\cdot},\lambda D_{1\cdot}+(1-\lambda) D_{2\cdot}}[R_T]= \lambda \E^{A_{1\cdot},D_{1\cdot}}[R_T]+(1-\lambda)\E^{A_{2\cdot},D_{2\cdot}}[R_T]
\end{align*}
holds for all $\lambda \in [0,1]$ followed form the linearity of expected value. Then the general minimax theorem holds. Which gives us the desired equality (2.7), and the value function is well defined. 
\begin{align*}
    V^\d(x_1,\ldots x_N)=\sup_{\cD_\cdot}\inf_{\cA_\cdot} \E^{\cA_\cdot,\cD_\cdot}\left[R_T\right]=\sup_{\cD_\cdot}\inf_{\cA_\cdot} \E^{\cA_\cdot,\cD_\cdot}\left[\max_{i=1,\ldots,N} X^i\right]
\end{align*}
\comm{\sum\limits_{t=0}^{\infty}\delta(1-\delta)^t\sup_{\cD_\cdot}\inf_{\cA_\cdot}\E^{\cA_\cdot,\cD_\cdot}[\max\limits_{i\in [N]}x_t^i]}
}
\end{proof}

\subsection{Limiting behavior of $V^\d$ }
The main objective of the paper is to provide an explicit formula for the leading order for the function $V^\d$ for small $\d>0$. For this purpose define the rescaled value function:
\begin{align}\label{eq:rescvalue}
u^\d:x\in \R^N\mapsto V^\d\left(\frac{x}{\sqrt{\d}}\right)\sqrt{\d}.
\end{align}
The next result shows that the limiting behavior of the value of the game can be characterized by the value of a stochastic control problem. 
\begin{prop}\label{prop:dppud} 
As $\d\downarrow 0$, the function $u^\d$ converges locally uniformly to $u:\R^N\mapsto \R$ which is the unique viscosity solution of the equation
\begin{align}\label{eq:pdegeo}
u(x)-\frac{1}{2}\sup_{J\in P(N)} e_J^\top \pa^2 u(x)e_J= \Phi(x)
\end{align}
in the class of functions with linear growth. 
Additionally, $u$ admits the stochastic control representation
\begin{align}\label{eq:fk}
u(x)=\sup_{(\sigma_t)} \E\left[\int_0^\infty e^{-t}\Phi(X_t)dt\right]
\end{align}
where $X$ is defined by $X_t=x+\int_0^t\sigma_s dW_s$ with $W$ a 1-dimensional Brownian motion and the progressively measurable process $(\sigma_t)$ satisfies for all $t$ $\sigma_t\in \{e_J:J\in P(N)\}$.
\end{prop}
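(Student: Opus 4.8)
The statement contains two assertions, which I would establish in turn. First, the local uniform convergence $u^\delta\to u$ with $u$ the unique linear-growth viscosity solution of \eqref{eq:pdegeo}: I would obtain this as a scaling limit of the discrete dynamic programming principle recorded above (this is essentially the content of \cite{MR3768426}). Second, the control representation \eqref{eq:fk}: I would deduce it by recognising \eqref{eq:pdegeo} as the Hamilton--Jacobi--Bellman equation of the stochastic control problem in \eqref{eq:fk} and appealing to the uniqueness from the first part.

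For the convergence, the first step is to record uniform regularity of the family $\{u^\delta\}$. From the dynamic programming principle and the observation that two lattice states at $\ell^0$-distance $k$ are joined by a path of $k$ unit steps, $V^\delta$ is non-decreasing in each coordinate, $1$-Lipschitz for $\|\cdot\|_1$, and satisfies $V^\delta(x+c\mathbf 1)=V^\delta(x)+c$; after rescaling, $u^\delta$ inherits these properties, and together with the bound $|V^\delta-\Phi|\le M/\sqrt\delta$ this yields $|u^\delta(x)-\Phi(x)|\le M$ uniformly in $\delta$ and $x$. Hence $\{u^\delta\}$ is precompact for local uniform convergence and every limit point has linear growth. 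The rescaled DPP reads
\begin{align*}
u^\delta(x)=\delta\,\Phi(x)+(1-\delta)\inf_{\alpha\in U}\sup_{\beta\in V}\sum_{J}\beta_J\bigl(u^\delta(x+\sqrt\delta\,e_J)-\sqrt\delta\,\alpha(J)\bigr),
\end{align*}
and to identify a limit $u$ as a viscosity solution I would take a smooth $\phi$ with $u-\phi$ having a strict local extremum at $x_0$, pick near-extrema $x_\delta\to x_0$ of $u^\delta-\phi$, insert the one-sided bound $u^\delta(x_\delta+\sqrt\delta e_J)-u^\delta(x_\delta)\le\phi(x_\delta+\sqrt\delta e_J)-\phi(x_\delta)$ (resp.\ $\ge$) into the DPP, Taylor-expand $\phi$ to second order, divide by $\delta$ and let $\delta\to0$. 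With $m(\beta):=\sum_J\beta_J e_J\in[0,1]^N$ the bracket equals $\phi(x_\delta)+\sqrt\delta\,m(\beta)\cdot(\nabla\phi(x_\delta)-\alpha)+\tfrac\delta2\sum_J\beta_J\,e_J^\top\partial^2\phi(x_\delta)e_J+o(\delta)$ uniformly in $(\alpha,\beta)$. The key structural input is that the super- and subdifferentials of a non-decreasing, $\ell^1$-contractive, $\mathbf 1$-covariant function consist of probability vectors, so $\nabla\phi(x_\delta)$ is asymptotically a probability vector; for the subsolution inequality the player then takes $\alpha=\nabla\phi(x_\delta)$, annihilating the first-order term for every $\beta$ and leaving $\tfrac12\sup_J e_J^\top\partial^2\phi(x_0)e_J$ in the limit. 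The supersolution inequality is the harder half: one must show that against an arbitrary $\alpha$ the nature can respond with a $\beta$ keeping the first-order term non-negative while still realising $\tfrac12\sup_J e_J^\top\partial^2\phi(x_0)e_J$ at second order --- this is where the combinatorics of $P(N)$ and, once more, the $\mathbf 1$-covariance enter, and it is carried out in \cite{MR3768426}. Finally, \eqref{eq:pdegeo} is a proper, degenerate-elliptic HJB equation, so a comparison principle holds by Ishii's lemma and variable doubling on bounded sets, extended to $\R^N$ by perturbing a subsolution with $\eta\sqrt{|x|^2+1}$ and letting $\eta\to0$; this pins down the limit uniquely and upgrades subsequential convergence to convergence of the whole family.

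For the representation \eqref{eq:fk}, put $\bar u(x):=\sup_{(\sigma_t)}\E[\int_0^\infty e^{-t}\Phi(X_t)\,dt]$ with $X_t=x+\int_0^t\sigma_s\,dW_s$ and $\sigma_t\in\{e_J:J\in P(N)\}$. Since $|\Phi(y)|\le|y|$ and $\E|X_t|\le|x|+\sqrt{Nt}$ uniformly over admissible controls, $\bar u$ is finite with linear growth and satisfies the dynamic programming principle for this discounted control problem; a routine verification then shows $\bar u$ is a viscosity solution of $\bar u-\tfrac12\sup_J e_J^\top\partial^2\bar u\,e_J=\Phi$, i.e.\ of \eqref{eq:pdegeo}, so $\bar u=u$ by uniqueness. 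The steps I expect to be the main obstacle are the supersolution half of the scaling limit --- proving that the $O(\sqrt\delta)$ term in the rescaled DPP contributes nothing once the nature optimises, which is precisely where the probabilistic structure is genuinely needed --- and the comparison principle for \eqref{eq:pdegeo} in the linear-growth class on $\R^N$, where the degeneracy of the operator rules out the routine arguments.
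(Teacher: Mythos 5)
Your proposal is in the right spirit and follows essentially the same architecture as the paper: convergence of $u^\delta$ via a Barles--Souganidis-type scaling limit, uniqueness of the viscosity solution in the linear-growth class, and identification of the control value function with that solution. The paper, however, delegates each of these steps to a citation: \cite[Theorem 7]{MR3768426} for the convergence (you are reproducing the substance of that proof), \cite[Theorem 5.1]{MR1118699} for uniqueness of the viscosity solution of \eqref{eq:pdegeo}, and --- crucially --- the \emph{stochastic Perron's method} of \cite{MR3124891} to connect the control problem \eqref{eq:fk} to the PDE. The one place your outline genuinely diverges from the paper's route is that last step. You write that $\bar u$ "satisfies the dynamic programming principle for this discounted control problem; a routine verification then shows $\bar u$ is a viscosity solution," but establishing the DPP for a nondegenerate-free stochastic control problem over an uncountable set of strategies involves measurable-selection issues that are anything but routine. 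The stochastic Perron's method is designed precisely to bypass this: it sandwiches the control value function between a subsolution and a supersolution built from stochastic sub/super-martingales, then invokes comparison, never needing to prove the DPP. So your approach buys self-containedness but at the cost of a technical obstacle the paper deliberately avoids; if you want to carry out your version honestly you would need either to cite a DPP result applicable to this discounted singular-diffusion setting or to switch to the Perron argument. Your treatment of the supersolution half of the scaling limit, and of comparison via doubling plus a coercive perturbation, is also plausible but largely re-derives what \cite{MR3768426} and \cite{MR1118699} already supply.
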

\begin{proof}
The fact that $u^\d$ converges to $u$ is a consequence of \cite[Theorem 7]{MR3768426}. Note also that an analysis of the proof of \cite[Theorem 7]{MR3768426} and the general methodology of proof in \cite{MR1115933} allows us to claim that the convergence is in fact locally uniform. The fact that $u$ admits the representation \eqref{eq:fk} is a consequence of uniqueness of viscosity solution of \eqref{eq:pdegeo} with linear growth that is proven in \cite[Theorem 5.1]{MR1118699} and the stochastic Perron's method of \cite{MR3124891}.
\end{proof}
\section{Main Results}\label{s.result}
\subsection{Explicit solution for $4$ experts}
The main contribution of the paper is to provide a method to explicitly solve the PDE \eqref{eq:pdegeo}.
\begin{thm}\label{thm:main} With $4$ experts, for $x\in \R^4$, the function $u$ is given by the expression
\begin{align}\label{eq:u}
 u(x)&=x^{(4)}-\frac{\sqrt{2}}{4} \sinh(\sqrt{2}(x^{(4)}-x^{(3)}))\\
&+\frac{\sqrt{2}}{2} \arctan\left(e^{\frac{x^{(1)}+x^{(2)}-x^{(3)}-x^{(4)}}{\sqrt{2}}}\right)\cosh\left(\frac{x^{(1)}-x^{(2)}+x^{(3)}-x^{(4)}}{\sqrt{2}}\right)\notag\\
&\quad\cosh\left(\frac{-x^{(1)}+x^{(2)}+x^{(3)}-x^{(4)}}{\sqrt{2}}\right)\cosh\left(\frac{-x^{(1)}-x^{(2)}+x^{(3)}+x^{(4)}}{\sqrt{2}}\right)\notag\\
    &+\frac{\sqrt{2}}{2}\arctanh\left(e^{\frac{x^{(1)}+x^{(2)}-x^{(3)}-x^{(4)}}{\sqrt{2}}}\right)\sinh\left(\frac{x^{(1)}-x^{(2)}+x^{(3)}-x^{(4)}}{\sqrt{2}}\right)\notag\\
    &\quad\sinh\left(\frac{-x^{(1)}+x^{(2)}+x^{(3)}-x^{(4)}}{\sqrt{2}}\right)\sinh\left(\frac{-x^{(1)}-x^{(2)}+x^{(3)}+x^{(4)}}{\sqrt{2}}\right)\notag
\end{align}
Additionally, $u$ is twice continuously differentiable, monotone \footnote{Monotone here means
$$u(x_1+y_1,\dots,x_N+y_N)\geq u(x_1,\dots,x_N)\mbox{ for all }x_i\in \R\mbox{ and }y_i\geq 0.$$ }, symmetric in its variables on $\R^4$, satisfy 
\begin{align}\label{translate}
u(x+\lambda (e_1+e_2+e_3+e_4))=u(x)+\lambda\mbox{ for all }x\in\R^4\mbox{ and }\lambda \in \R
\end{align}
and if $J$ is a maximizer of the Hamiltonian $\sup_{J\in P(N)} e_J^\top \pa^2 u(x)e_J$ then its complement $J^c$ is also a maximizer of the same Hamiltonian. 

Moreover, 
\begin{equation}\label{cor:0}
V^{\delta}(0)=\frac{\pi}{4\sqrt{2\d}}+o\left(\frac{1}{\sqrt{\d}}\right),
\end{equation}
In fact, $u$  has the following expansion at the origin
\begin{equation}\label{eq:expofu}
\begin{split}
&u(x_1,x_2,x_3,x_4)=\frac{\pi}{4\sqrt{2}}+\frac{1}{4}(x_1+x_2+x_3+x_4)+\\
&\frac{3\pi}{16\sqrt{2}}(x_1^2+x^2_2+x^2_3+x^2_4-\frac{2}{3}(x_1x_2+x_1x_3+x_1x_4+x_2x_3+x_2x_4+x_3x_4)) \\
&+ o(|x|^2).
\end{split}
\end{equation}
\end{thm}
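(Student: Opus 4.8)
The plan is to run a \emph{verification argument}. Let $\tilde u$ denote the right‑hand side of \eqref{eq:u}, written as a function of the ordered coordinates $y_i:=x^{(i)}$. I would prove that $\tilde u$ belongs to $C^2(\R^4)$, has linear growth, and solves \eqref{eq:pdegeo} in the classical (hence viscosity) sense; since Proposition~\ref{prop:dppud} identifies $u$ as the \emph{unique} viscosity solution of \eqref{eq:pdegeo} with linear growth, this forces $u=\tilde u$. The remaining assertions are then cheap. Symmetry in the variables is built into the formula. The normalization \eqref{translate} holds because, under $x\mapsto x+\lambda\mathbf 1$, the term $x^{(4)}$ increases by $\lambda$ while the exponent $\tfrac{x^{(1)}+x^{(2)}-x^{(3)}-x^{(4)}}{\sqrt2}$ and all three hyperbolic arguments (whose coefficient vectors each sum to $0$) are invariant. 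Monotonicity is read off the stochastic representation \eqref{eq:fk}, since $\Phi=\max$ is nondecreasing and a componentwise shift of $x$ shifts $X_t$ accordingly. Finally, the statement that $J^c$ maximizes the Hamiltonian whenever $J$ does is a one‑line consequence of \eqref{translate}: differentiating \eqref{translate} twice in $\lambda$ gives $\partial^2 u(x)\mathbf 1=0$, whence $e_{J^c}^\top\partial^2 u\,e_{J^c}=(\mathbf 1-e_J)^\top\partial^2 u\,(\mathbf 1-e_J)=e_J^\top\partial^2 u\,e_J$ by symmetry of the Hessian.

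Next I would establish regularity and growth of $\tilde u$. On each open chamber $\{x_{\pi(1)}<\cdots<x_{\pi(4)}\}$ the formula is a composition of real‑analytic functions: the only value where a factor degenerates is the diagonal $\{x_1=x_2=x_3=x_4\}$, where $\arctanh(e^z)$ blows up, $z:=\tfrac{x^{(1)}+x^{(2)}-x^{(3)}-x^{(4)}}{\sqrt2}\le 0$; but the third $\sinh$ has argument $-z$, so the offending product is $O(z\log|z|)$ and the singularity is removable in $C^2$. For gluing across a wall $\{x^{(i)}=x^{(i+1)}\}$, the product‑to‑sum identities are decisive: with $P=x^{(1)}-x^{(2)}$, $Q=x^{(3)}-x^{(4)}$ one has $\cosh a\cosh b=\tfrac12[\cosh(\sqrt2Q)+\cosh(\sqrt2P)]$ and $\sinh a\sinh b=\tfrac12[\cosh(\sqrt2Q)-\cosh(\sqrt2P)]$, so these blocks are smooth in the original coordinates, while the leading $|x^{(4)}-x^{(3)}|$ singularity of $x^{(4)}$ cancels against that of $-\tfrac{\sqrt2}{4}\sinh(\sqrt2(x^{(4)}-x^{(3)}))$, leaving a residual $-\tfrac16|x^{(4)}-x^{(3)}|^3+\cdots$, which is $C^2$; the analogous checks at the other walls and at the lower strata finish $C^2$‑regularity. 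Linear growth likewise hinges on cancellation: writing $A(z)=\arctan(e^z)\cosh z$ and $B(z)=\arctanh(e^z)\sinh z$ (both bounded on $(-\infty,0]$, with $A-B=1+O(e^{2z})$ as $z\to-\infty$) and using the identities above, the exponentially large pieces $\sinh(\sqrt2Q)$ and $(A-B)\cosh(\sqrt2Q)$ combine into $e^{\sqrt2|Q|}(A(z)-B(z)-1)$, which stays bounded along every ray because $|Q|$ is comparable to $|z|$ precisely when it grows; one concludes $\tilde u-x^{(4)}$ is $O(|x|)$.

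The heart of the proof is \eqref{eq:pdegeo}. By symmetry it suffices to work in the sorted chamber, where $y_i=x^{(i)}$ are smooth and the vector $e_J$ attached to the ranks $\{2,4\}$ acts as $\partial_{y_2}+\partial_{y_4}$. Two things are needed. \textbf{(i)} For the comb set $J^\star$ (ranks $2$ and $4$), the \emph{linear} identity $\tilde u-\tfrac12\bigl(\partial_{y_2}^2+2\partial_{y_2}\partial_{y_4}+\partial_{y_4}^2\bigr)\tilde u=y_4=\Phi$ holds; this is how \eqref{eq:u} was engineered in Sections~\ref{s.oblique}--\ref{s.character} (cf.\ Lemma~\ref{lem:reflecBM} and the hyperbolic systems \eqref{syst:hf}, \eqref{syst:hr}), and re‑deriving it is a routine differentiation. \textbf{(ii)} One must show $e_{J^\star}^\top\partial^2\tilde u\,e_{J^\star}\ge e_J^\top\partial^2\tilde u\,e_J$ for \emph{every} $J\in P(4)$, i.e.\ that the comb strategy maximizes the Hamiltonian throughout $\R^4$. \emph{This is the main obstacle.} Using \eqref{translate} the sixteen comparisons collapse to eight, but the second derivatives are unwieldy, so instead of brute force I would encode the gaps $e_{J^\star}^\top\partial^2\tilde u\,e_{J^\star}-e_J^\top\partial^2\tilde u\,e_J$ in the quantities appearing in \eqref{eq:Fsuper}, show they satisfy a system of hyperbolic differential inequalities with the correct behaviour on the boundary of the sorted chamber, and invoke the maximum principle of Proposition~\ref{prop:comp} to conclude they are nonnegative. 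Combined with (i) this shows $\tilde u$ solves \eqref{eq:pdegeo} classically, hence $u=\tilde u$.

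Finally I would harvest \eqref{eq:expofu} and \eqref{cor:0}. Since $u\in C^2$ and is symmetric, $\partial^2 u(0)=\alpha I+\beta(\mathbf 1\mathbf 1^\top-I)$ for scalars $\alpha,\beta$; \eqref{translate} gives $\alpha+3\beta=0$, and evaluating \eqref{eq:pdegeo} at $0$ with the comb maximizer, whose value is $2(\alpha+\beta)=\tfrac43\alpha$, together with $u(0)=\tfrac{\pi}{4\sqrt2}$ (the $z\to0^-$ limit of \eqref{eq:u}, where the $\arctan$ term gives $\tfrac{\sqrt2}{2}\cdot\tfrac{\pi}{4}$ and the $\arctanh$ term vanishes), forces $\alpha=\tfrac{3\pi}{8\sqrt2}$, which is exactly the quadratic form displayed; the linear term equals $\tfrac14(x_1+\cdots+x_4)$ because $\nabla u(0)$ is permutation‑invariant and $\mathbf 1\cdot\nabla u(0)=1$ by \eqref{translate}. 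The asymptotics of $V^\delta(0)$ then follow from the local uniform convergence $u^\delta\to u$ of Proposition~\ref{prop:dppud} and the scaling \eqref{eq:rescvalue}: $\sqrt\delta\,V^\delta(0)=u^\delta(0)\to u(0)=\tfrac{\pi}{4\sqrt2}$, i.e.\ $V^\delta(0)=\tfrac{\pi}{4\sqrt{2\delta}}+o(\delta^{-1/2})$.
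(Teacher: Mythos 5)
Your proposal runs essentially the same verification argument as the paper: establish $C^2$ regularity of the explicit formula, show it satisfies the comb PDE \eqref{eq:PDEUU} in the sorted chamber, reduce the Hamiltonian comparison $e_{J^\star}^\top\partial^2 u\,e_{J^\star}\geq e_J^\top\partial^2 u\,e_J$ to nonnegativity of supersolutions of the hyperbolic system and invoke Proposition~\ref{prop:comp}, and conclude $u=\tilde u$ from uniqueness in Proposition~\ref{prop:dppud}. Your derivation of the Hessian at the origin purely from symmetry, $\partial^2u(0)\mathbf 1=0$, and the PDE is a genuinely slicker route to \eqref{eq:expofu} than the paper's direct Taylor expansion of $G$, and the deduction of the $J\leftrightarrow J^c$ invariance and of $u(0)=\pi/(4\sqrt2)$ is correct.

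There is, however, one concrete gap: your $C^2$-gluing argument does not in fact extend ``analogously'' to the middle wall $\{x^{(2)}=x^{(3)}\}$. Across the outer walls $\{x^{(1)}=x^{(2)}\}$ and $\{x^{(3)}=x^{(4)}\}$ the argument $z=\tfrac{x^{(1)}+x^{(2)}-x^{(3)}-x^{(4)}}{\sqrt2}$ of $\arctan$ and $\arctanh$ is a symmetric function of the swapping pair and hence smooth, and the product-to-sum identities you quote dispose of the $\cosh/\sinh$ blocks. But across $\{x^{(2)}=x^{(3)}\}$ the quantity $z$ itself picks up a $|x_i-x_j|$ kink (as do $a$, $c$ and $x^{(4)}-x^{(3)}$), so $\arctan(e^z)$, $\arctanh(e^z)$ and the $\sinh(\sqrt2(x^{(4)}-x^{(3)}))$ term are each only piecewise smooth there and no pairing of hyperbolic factors by itself removes the kink. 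The needed matching of first and second normal derivatives at this face is exactly what the paper extracts from the interior reflection relation \eqref{eq:refl2} in the form \eqref{eq:verifbound2} and its differentiated consequences \eqref{eq:22}, \eqref{eq:parder2}, \eqref{eq:cross3}, \eqref{eq:cross4}; without invoking these (or an equivalent computation) your regularity step is incomplete. The gap is fixable by the paper's method, but it cannot be waved through as an ``analogous check.''
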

\begin{proof}
The proof of this result is provided in Section \ref{s.proof} after developing the methodology required to obtain this expression. Note that one can check by hand (or preferably with a computer) that the expression provided at \eqref{eq:u} solves the equation \eqref{eq:pdegeo} when all $x_i$ are different from each other. Since the set of points $x\in \R^4$ with $x_i=x_j$ for some $i,j$ is of zero Lebesgue measure, this proves that $u$ is an almost everywhere solution of \eqref{eq:pdegeo}. However, due to potential discontinuities of the derivatives when two of the $x_i's$ are equal we need to check that the almost everywhere solution of the equation \eqref{eq:pdegeo} defined via this expression is twice continuously differentiable and is therefore a smooth solution. 
\end{proof}

\begin{remark}
i)\eqref{cor:0} is the main result for the long time behavior of the regret minimization problem with geometric stopping and is conjectured in \cite{MR3478415}. The optimal regret scales as the square root of the time scale in hand. In this case of geometric stopping $u(0)=\frac{\pi}{4\sqrt{2}}$ gives the term of proportionality between the optimal regret and the stopping time parameter. 

ii) The fact that $J^c$ maximizes $\sup_{J\in P(N)} e_J^\top \pa^2 u(x)e_J$ whenever $J$ maximizes this Hamiltonian is a direct consequence of the regularity of $u$ and its translation invariance as in \eqref{translate}. This fact will be useful to us while checking the optimality of comb strategies.

 \end{remark}
\subsection{Asymptotically optimal strategies} Given the value of $u$, we now describe a family of asymptotically optimal strategies for nature. 
Inspired by \cite{MR3478415} we give the following definition. 
\begin{defn}\label{def:controls}
(i) We denote 
\begin{align}\label{def:J*}
\cJ^*(x)=\argmax_{J\in P(N)} e_{J}^\top \pa^2 u(x)e_{J},
\end{align}
the set of maximizers of the Hamiltonian.

(ii) For all $x\in \R^4$ with $x_{i_1}\leq x_{i_2}\leq x_{i_3}\leq x_{i_4}$, we denote $\cJ_\cC(x)\in P(N)$ the comb strategy which is the control for the problem \eqref{eq:fk} that consists in choosing the experts $i_4$ and $i_2$. We take the convention that if two components $x_i$ and $x_j$ of the points are equal for $i<j$ then the ordering of the point is taken with $x_i\leq x_j$.
  
 (iii) We denote $\cJ^b_\cC\in \cV$ the balanced comb strategy which is the control for the nature in game \eqref{eq:defVd} that consists in choosing at $\frac{x}{\sqrt{\d}}\in \R^4$, $\cJ_\cC(x)\in P(N)$ with probability $\frac{1}{2}$ and $\cJ^c_\cC(x)\in P(N)$ with probability $\frac{1}{2}$.
\end{defn}
\begin{remark}
Note that (ii) defines a control for the control problem \eqref{eq:fk} while (iii) defines a control for the game \eqref{eq:defVd}. Hence the latter depends on $\d$ and $x$ and is scaled to reflect the scaling between the two problems. Additionally, as a consequence of \cite[Claim 1]{MR3478415}, we have defined $\cJ^b_\cC$ as the unique balanced strategy that can be generated using $\cJ_\cC(x)$.
\end{remark}
One may conjecture that it is asymptotically optimal for the nature to choose for all $\frac{x}{\sqrt{\d}}\in \R^4$ an element in $\cJ(x)$. However, this conjecture is not true since the strategy is not balanced in the sense of \cite{MR3478415}. Indeed, assume for example that for $x\in \R^4$ $\cJ^*(x)$ is reduced to a unique subset of cardinality $1$, meaning $\cJ^*(x)=\{J\}=\{\{i\}\}$. In this case, choosing the expert $i$ would be suboptimal for the nature since the player can also guess this control and choose the expert $i$. It is proven in \cite{MR3478415} that in order to be optimal any strategy of the nature has to be balanced. Thanks to the Theorem \ref{thm:main}, the simplest strategy for the nature would be to randomize his strategy between the maximizer of the Hamiltonian and its complement.

The main result for asymptotically optimal strategies is the following theorem.  
\begin{thm}\label{thm:asyopt}
The control $\cJ^b_\cC\in \cV$ is asymptotically optimal for the nature, in the sense that 
\begin{align}
\underline u^\d(x)=u(x)+o(1),
\end{align}
where $o$ is locally uniform in $x$, and 
we denote
\begin{align}
\underline u^\d(x)=\sqrt{\d}\inf_{\a\in \cU}\E^{\a,\cJ^b_\cC}\left[{R^{\frac{x}{\sqrt{\d}}}_T}\right]\label{def:uu}\end{align}
where $R^{\frac{x}{\sqrt{\d}}}_T$ is the regret of player at time $T$ starting from the state $X_0=\frac{x}{\sqrt{\d}}$.
\end{thm}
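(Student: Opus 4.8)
\emph{Overall strategy.} I would establish the two estimates $\limsup_{\d\downarrow0}\underline u^\d\le u$ and $\liminf_{\d\downarrow0}\underline u^\d\ge u$, locally uniformly. The first is immediate: $\cJ^b_\cC$ is one admissible strategy for the nature, so $\underline u^\d(x)=\sqrt\d\,\inf_\a\E^{\a,\cJ^b_\cC}[R^{x/\sqrt\d}_T]\le\sqrt\d\,\sup_{\b}\inf_\a\E^{\a,\b}[R^{x/\sqrt\d}_T]=u^\d(x)$, and $u^\d\to u$ locally uniformly by Proposition~\ref{prop:dppud}. All the substance is in the lower bound, and for that the key is that the \emph{balanced} comb removes the player from the problem.

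\emph{The balanced comb gives a linear dynamic programming equation.} Against $\cJ^b_\cC$, at a state $y$ the realized winning set is $\cJ_\cC(y)$ or $\cJ_\cC(y)^c$ with probability $\tfrac12$ each, so for any distribution $\a\in U$ the chance that the player's pick lies in the realized set is $\tfrac12\big(\a(\cJ_\cC(y))+\a(\cJ_\cC(y)^c)\big)=\tfrac12$. Hence $\underline u^\d$ is independent of $\a$, and a first-step analysis yields the linear equation
\begin{align}\label{eq:ludDPP}
\underline u^\d(y)=\d\Phi(y)+(1-\d)\Big[\tfrac12\underline u^\d\big(y+\sqrt\d\,e_{\cJ_\cC(y)}\big)+\tfrac12\underline u^\d\big(y+\sqrt\d\,e_{\cJ_\cC(y)^c}\big)-\tfrac{\sqrt\d}{2}\Big].
\end{align}
Equivalently $\underline u^\d(y)=\E[\Phi(Y^{\d,y}_T)]$, where $Y^{\d,y}$ is the martingale started at $y$ whose increment at a state $z$ is $\pm\sqrt\d\,(e_{\cJ_\cC(z)}-\tfrac12 e)$ with equal probabilities and $e:=e_1+\cdots+e_4$ (in ranked coordinates this is a walk in the Weyl chamber obliquely reflected at the walls $\{x^{(i)}=x^{(i+1)}\}$), and $T$ is an independent geometric time with mean of order $1/\d$. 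This representation also gives the exact translation invariance $\underline u^\d(y+\lambda e)=\underline u^\d(y)+\lambda$ — which is precisely what makes the $-\tfrac{\sqrt\d}{2}$ term in \eqref{eq:ludDPP} consistent with a second-order operator — together with the a priori bounds $\Phi\le\underline u^\d\le\Phi+M$ (lower bound by the martingale property and convexity of $\Phi$; upper bound from $\underline u^\d\le u^\d$ and the uniform estimate $|V^\d-\Phi|\le M/\sqrt\d$, or directly from $\E|Y^{\d,y}_T-y|^2=O(1)$) and the second-moment control $\d^2\sum_{s}(1-\d)^s\,s\,\E|Y^{\d,y}_s-y|^2=O(1)$.

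\emph{$u$ solves the scheme to leading order, and conclusion.} By Theorem~\ref{thm:main}, $u\in C^2$, satisfies \eqref{translate} (hence $\nabla u\cdot e\equiv1$ and $\pa^2 u\,e\equiv0$), and $\cJ_\cC(y)$ is a maximizer of $\sup_{J}e_J^\top\pa^2 u(y)e_J$; since moreover $u$ is the value of \eqref{eq:fk} for the comb control and solves \eqref{eq:pdegeo}, it satisfies the linear equation $u-\tfrac12 e_{\cJ_\cC(y)}^\top\pa^2 u(y)e_{\cJ_\cC(y)}=\Phi$, equivalently (using $\pa^2 u\,e=0$) $u-\tfrac14\big(e_{\cJ_\cC}^\top\pa^2 u\,e_{\cJ_\cC}+e_{\cJ_\cC^c}^\top\pa^2 u\,e_{\cJ_\cC^c}\big)=\Phi$. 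Substituting $u$ into the right-hand side of \eqref{eq:ludDPP} and Taylor expanding, the first-order terms produce $\tfrac{\sqrt\d}{2}\,\nabla u\cdot e=\tfrac{\sqrt\d}{2}$, which cancels $-\tfrac{\sqrt\d}{2}$; the second-order terms combined with $\d(\Phi-u)$ vanish by the equation just recalled; and the remainder is $o(\d)$ locally uniformly since $\pa^2 u$ is continuous. Thus $u$ satisfies \eqref{eq:ludDPP} up to an error $\d\,r^\d(y)$ with $r^\d\to0$ locally uniformly. Setting $\phi^\d:=\underline u^\d-u$ (also translation invariant and bounded uniformly in $\d$), subtraction gives $\phi^\d(y)=(1-\d)\big[\tfrac12\phi^\d(y+\sqrt\d e_{\cJ_\cC(y)})+\tfrac12\phi^\d(y+\sqrt\d e_{\cJ_\cC(y)^c})\big]+\d r^\d(y)$, so that $(1-\d)^t\phi^\d(Y^{\d,y}_t)+\d\sum_{s<t}(1-\d)^s r^\d(Y^{\d,y}_s)$ is a martingale; letting $t\to\infty$ and using the boundedness of $\phi^\d$,
$$\phi^\d(y)=\d\sum_{s\ge0}(1-\d)^s\,\E\!\big[r^\d(Y^{\d,y}_s)\big].$$
For $y$ in a fixed compact $K$, the second-moment control shows the geometrically-weighted occupation of $Y^{\d,y}$ concentrates, up to a negligible far-excursion contribution, on a fixed compact neighbourhood of $K$, where $r^\d\to0$ uniformly; hence $\phi^\d\to0$ uniformly on $K$, which is the assertion.

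\emph{Main obstacle and an alternative.} The elementary direction and the linearization \eqref{eq:ludDPP} are routine; the crux is the passage from \eqref{eq:ludDPP} and the consistency estimate to $\phi^\d\to0$. Two points need care. First, the consistency error $r^\d$ is only \emph{locally} uniform because $\pa^2 u$ is unbounded on $\R^4$ (the explicit formula \eqref{eq:u} contains hyperbolic functions), so one must bound excursions of $Y^{\d,y}$ away from $K$ using the at-most-exponential growth of $\pa^2 u$, hence of $r^\d$, together with Gaussian-type tail estimates for the walk. Second, there is a $\sqrt\d$-thin boundary layer at the walls $\{x^{(i)}=x^{(i+1)}\}$ where $\cJ_\cC$ is discontinuous and the increments of $\Phi$ fail to be affine — this is exactly where the ``local time'' contribution mentioned in the introduction lives, and one checks it is $O(\sqrt\d)$ per visit and so is absorbed into $r^\d$. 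An equivalent, perhaps cleaner, way to organize the same argument is to prove an invariance principle identifying $\lim_{\d\downarrow0}\E[\Phi(Y^{\d,y}_T)]$ with $\E\big[\int_0^\infty e^{-s}\Phi(\bar Y^y_s)\,ds\big]$ for the obliquely reflected Brownian motion $\bar Y^y$, and then invoke Lemma~\ref{lem:reflecBM} together with Theorem~\ref{thm:main} to recognize this limit as $u(y)$; there the delicate step is passing to the limit in the discrete reflection jointly with the geometric clock, controlled by the uniform integrability following from $\E|Y^{\d,y}_T-y|^2=O(1)$.
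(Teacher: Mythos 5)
Your proof is correct in substance and reaches the same conclusion, but it routes around the key step differently from the paper. Both arguments begin identically: the balanced comb makes the game against $\cJ^b_\cC$ linear (the $\a$-dependence cancels because $\a(\cJ_\cC)+\a(\cJ_\cC^c)=1$), which together with the translation invariance $\underline u^\d(\cdot+\lambda\sqrt\d\,e)=\underline u^\d(\cdot)+\lambda$ turns the DPP into a centered second-difference scheme. The paper then simply cites the Barles--Souganidis--type convergence theorem of \cite{MR3768426} (Theorem 7 there) to conclude that $\underline u^\d$ converges to the \emph{unique viscosity solution with linear growth} of $f-\tfrac12 e_{\cJ_\cC}^\top\pa^2 f\,e_{\cJ_\cC}=\Phi$, and finishes by observing that \eqref{eq:PDEUU} shows $u$ solves this very equation with linear growth. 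You instead give a self-contained consistency--stability argument: Taylor expansion of the $C^2$ function $u$ inside the scheme (using $\pa^2 u\,e=0$ and the linear PDE from \eqref{eq:PDEUU}) yields an $o(\d)$ consistency error $r^\d$, and then the difference $\phi^\d=\underline u^\d-u$ is propagated by the discrete martingale with an occupation-time estimate to show $\phi^\d\to0$. This buys you independence from the viscosity uniqueness theorem of \cite{MR1118699} and the monotone-scheme machinery, at the cost of needing the $C^2$ regularity of $u$ (which Theorem~\ref{thm:main} indeed supplies) and explicit tail control of the walk against the exponential growth of $\pa^2 u$ and the discontinuity of $\cJ_\cC$ across the Weyl chamber walls --- technical points you do flag but only sketch. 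One small inaccuracy: the clean representation $\underline u^\d(y)=\E[\Phi(Y^{\d,y}_T)]$ with increments $\pm\sqrt\d(e_{\cJ_\cC}-\tfrac12 e)$ does not fall directly out of the DPP; what the DPP gives (after using translation invariance to replace $\underline u^\d(x+\sqrt\d e_{\cJ^c})-1$ by $\underline u^\d(x-\sqrt\d e_{\cJ})$, as the paper does) is a centered second-difference equation, and the $-\tfrac{\sqrt\d}{2}$ drift has to be accounted for step by step rather than absorbed into a state-space shift. This is cosmetic and does not affect the conclusion, but the martingale/stability estimate should be phrased directly from the subtracted DPP for $\phi^\d$, as you in fact do later in the argument.
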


The proof is deferred to Section~\ref{sec:profossrst}. We will finish this section with a few remarks.

\begin{remark}
As a sanity check, the expansion of $u$ implies that the Hessian of $u$ is 
$$H=\frac{\pi}{8\sqrt{2}}\begin{pmatrix}
3&-1&-1&-1\\
-1&3&-1&-1\\
-1&-1&3&-1\\
-1&-1&-1&3
\end{pmatrix}$$
and 
$$u(0)=\frac{\pi}{4\sqrt{2}}=\frac{1}{2}\begin{pmatrix}
1\\
0\\
1\\
0
\end{pmatrix}^\top H\begin{pmatrix}
1\\
0\\
1\\
0
\end{pmatrix},$$
where the second equality follows from \eqref{eq:pdegeo} and the optimality of the comb strategies.
\end{remark}

\begin{remark}
We note that at the leading order it is optimal for the nature to choose the controls $\cJ_\cC^b$ in the sense that for all family 
$\a^\d\in \cU$ and $\beta^\d \in \cV$ for $\d>0$,  we have that 
$$\limsup_{\delta\downarrow 0}\sqrt{\delta}\left( \E^{\a^\d,\b^\d}\left[{R^{\frac{x}{\sqrt{\d}}}_T}\right]-\E^{\a^\d,\cJ^b_\cC}\left[{R^{\frac{x}{\sqrt{\d}}}_T}\right]\right)\leq 0.$$
This inequality means that up to an error negligible at the leading order, the comb strategy is optimal for the nature. 
 \end{remark}

\begin{remark}
i)In the case of 3 experts, \cite{MR3478415} gives the exact value of $V^\d$ based on a ``guess and verify approach".The following expression is given for $u$ in \cite{MR3768426}
$$x^{(3)}+\frac{1}{2\sqrt{2}}e^{\sqrt{2}(x^{(2)}-x^{(3)})}+\frac{1}{6\sqrt{2}}e^{\sqrt{2}(2x^{(1)}-x^{(2)}-x^{(3)})},$$
which is obtained by taking a continuum analogue of \cite{MR3768426}.
Compared to this 3 dimensional counterpart the expression \eqref{eq:u} is not a simple sum of exponentials. Instead of guess and verify we needed  to directly compute the value of comb strategies.
 
 ii)It is possible to prove the Theorem \ref{thm:asyopt} for the particular case of $x=0$ using a combination of Theorem \ref{thm:main} and \cite[Theorem 5.1]{MR3478415}. However, unlike the proof provided below, such a proof cannot be directly extended to the general case $x\in \R^4$. 
 \end{remark}

\begin{remark}
Note that for all $x\in \R^4$ we have $\pa_i u(x)\in [0,1]$ with $\sum_{i=1}^4\pa_i u(x)\in [0,1]=1$. Hence $\{\pa_i u(x)\}_{i=1}^4\in U$. The claim is direct consequence of \eqref{translate}.
Thanks to this observation, we can define $\a^*\in \cU$ via the feedback control :
at point $\frac{x}{\sqrt{\d}}\in \R^4$, the player chooses the expert $i$ with probability $\pa_i u(x)$ and define the value 
\begin{align}
\overline u^\d(x)=\sqrt{\d}\sup_{\b\in \cV}\E^{\a^*,\b}\left[{R^{\frac{x}{\sqrt{\d}}}_T}\right]\label{def:ou}.
\end{align}
We conjecture that
$$\overline u^\d(x)=u(x)+o(1)$$
which would imply that $\a^*$ is an asymptotically optimal strategy for the player. The main difficulty one faces to obtain such a result is to obtain locally uniform bounds for $\overline u^\d(x)$ when $\d\downarrow 0$. 
\end{remark}

 \section{Value for comb strategies}\label{s.oblique}
Inspired by the conjecture in \cite{MR3478415}, our objective here is to introduce the value of the control problem \eqref{eq:fk} corresponding to comb strategies. Then, in Section \ref{s.character}, we develop a methodology to compute this value.  Finally, in Section \ref{s.proof}, we check that the value computed in these sections is a solution to \eqref{eq:pdegeo}. 

We note that the Sections \ref{s.oblique} and \ref{s.character} are only included in the paper to explain how to find the expression \eqref{eq:u}. Indeed, the only rigorous proofs for our results are in Section \ref{s.proof}. Therefore, in Sections \ref{s.oblique} and \ref{s.character}, we will slightly deviate from mathematical rigor.  The purpose of this section is to relate the value given by comb strategies with distributional properties of an obliquely reflected Brownian Motion. Then, we compute and analyze this value in Sections \ref{s.character} and \ref{s.proof}.  

\subsection{Analysis}
The optimal strategy for \eqref{eq:fk} conjectured in \cite{MR3478415} consists in choosing the best and the third best experts. This is a rank based interaction for the evolution of the components of $X^\cC$, the optimally controlled state. Therefore, for any $x\in \R^4$, it is expected that $X^{i,x,\cC}$ solves the following SDE
 \begin{align}\label{eq:combcontrol}
 X_t^{i,x,\cC}=x^i+\int_0^t\sum_{j=1}^4 \sigma^\cC_j \1_{\{X_r^{i,x,\cC}=X_r^{(j),x,\cC}\}}dW_r\mbox{ for }t\geq 0\mbox{ and }i=1,\dots,4; 
 \end{align}
 where $\sigma^\cC_4=\sigma^\cC_2=1$ and $\sigma^\cC_3=\sigma^\cC_1=0$ is the control corresponding to comb strategy.

It is not clear that \eqref{eq:combcontrol} admits a strong solution. In fact, based on \cite[Theorem 4.1]{fernholz2011planar}, we conjecture that there is no strong solution to \eqref{eq:combcontrol}. However, it is expected that the ranked components $X_t^{(i),x,\cC}$ are well-defined. Given also the fact that the payoff of the problem is symmetric, we will directly define our value of interest via an obliquely reflected Brownian motion. This procedure also allows a reduction of the dimension of the problem.

We first recall the definition of an obliquely reflected Brownian motion given in \cite[Definition 2.1]{MR1381009}.
\begin{defn}\label{def:srbm}
We say that the family of continuous processes $\{\cY^{y}_t\}_{y\in \R_+^3}$ and probability measures $\{\P^{y}\}_{y\in \R^3_+}$ is a weak solution to the semimartingale reflected Brownian motion on $\R^3_+$ with covariance matrix $\Gamma$ and reflection matrix $R$ if \\
i) For all $t\geq 0$ and $y\in \R^3_+$
$$\cY^{y}_t=\cW_t^y+R\lambda^y(t)$$
ii) The process $\cW_t^y\in\R^3$ is a Brownian motion with covariance matrix $\Gamma$ under $\P^y$.\\
iii) $\lambda^y$ is adapted to the filtration generated by $\cY^{y}$, $\lambda_0^y=0$, $\lambda^y$ is continuous, non decreasing, and 
$$\int_0^t\1_{\{\cY^{i,y}_r=0\}}d\lambda^i_r=\lambda^i_t\mbox{ for }i=1, 2, 3.$$
\end{defn}
We will denote by $(Y^{y})_{y \in \mathbb{R}_+^3}=(Y^{1,y},Y^{2,y},Y^{3,y})_{y \in \mathbb{R}_+^3}$ the family
with
$$\Gamma:=\begin{pmatrix}
1&-1&1\\
-1&1&-1\\
1&-1&1
\end{pmatrix},\,R:=\begin{pmatrix}
1&-1/2&0\\
-1/2&1&-1/2\\
0&-1/2&1
\end{pmatrix}$$
and $(Y^y_1(0),Y^y_2(0),Y^y_3(0))=y \in \R^3_+$.
These processes have the 
following semimartingale decomposition for $t\geq 0$,
\begin{align}\label{eq:decomY}
dY_t^{3,y}&=dW_t+d\Lambda^{3}_t-\frac{1}{2}d\Lambda^{2}_t,\notag \\
dY_t^{2,y}&=-dW_t+d\Lambda^2_t-\frac{1}{2}(d\Lambda^{3}_t+d\Lambda^{1}_t),\\
dY_t^{1,y}&=dW_t+d\Lambda^{1}_t-\frac{1}{2}d\Lambda^{2}_t,\notag
\end{align}
and denote 
$\Lambda^{j}_t$ for $j=1,2,3$ the local time of $Y^y_j\geq 0$  at the origin. Since the matrix $R-I$ is a tridiagonal Toeplitz matrix whose eigenvalues are less than $1$ in absolute value, there exists a unique solution to the oblique reflection problem;  see \cite[Theorem 2.1]{MR1381009}.
However, the existence of solution to \eqref{eq:combcontrol} is not straightforward as discussed above. If a solution to this system existed, then we clearly would have 
\begin{align*}
Y_t^{j-1,y}(t)= X_t^{(j),x,\cC}- X_t^{(j-1),x,\cC}\geq 0\mbox{ for } j=2, 3, 4,
\end{align*}
with $y=(x^{(2)}-x^{(1)},x^{(3)}-x^{(2)},x^{(4)}-x^{(3)})\in \R^3_+$. Henceforth, we will assume that this is the case. (This is the only non-rigorous part of the derivation. But we should again remark that a rigorous verification of our claims is in Section~\ref{s.proof} and the arguments here are performed for giving an intuitive construction of the solution.)
In the sequel we will denote
\[
Y^{4,y}_t=\sum_{j=1}^4X_t^{(j),x,\cC}=\sum_{j=1}^4X_t^{j,x,\cC}.
\]

\subsection{Value associated to an obliquely reflected Brownian motion}
We now give a lemma that allows us to define our candidate solution to \eqref{eq:pdegeo}.
\begin{lemma}\label{lem:reflecBM}
Assume that there exists a weak solution to \eqref{eq:combcontrol}. Then for all $x$ we have 
\begin{align}\label{eq:decomuc}
\E\left[\int_0^\infty e^{-t}\Phi(X^{x,\cC}_t)dt\right]=&\Phi(x)+v(x^{(2)}-x^{(1)},x^{(3)}-x^{(2)},x^{(4)}-x^{(3)})
\end{align}
where 
\begin{align*}
&v(y_1,y_2, y_{3}):=\frac{1}{2}\E\left[\int_0^\infty e^{-t}\Lambda^{3,y}_tdt\right].
\end{align*}

\end{lemma}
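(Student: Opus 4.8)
The plan is to decompose the payoff $\Phi(X^{x,\cC}_t) = \Phi(X^{x,\cC}_0) + (\Phi(X^{x,\cC}_t) - \Phi(X^{x,\cC}_0))$ and evaluate the two resulting contributions separately. The first contribution is constant in $t$, equal to $\Phi(x) = x^{(4)}$, and since $\int_0^\infty e^{-t}\,dt = 1$ it integrates to exactly $\Phi(x)$, which is the first term on the right-hand side of \eqref{eq:decomuc}. For the second contribution, I would express $\Phi(X^{x,\cC}_t)=X_t^{(4),x,\cC}$ and use the semimartingale decomposition of the ranked components. Under the comb control $\sigma^\cC_4=1$, the top-ranked process satisfies $dX_t^{(4),x,\cC}=dW_t + (\text{local time terms})$; more precisely, the increase of the top rank differs from the driving Brownian motion only through the local time $\Lambda^{3}$ at the interface between the two highest experts (the gap $Y^{3,y}$). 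Concretely, from the decomposition \eqref{eq:decomY} one reads $dY_t^{3,y}=dW_t+d\Lambda^{3}_t-\tfrac12 d\Lambda^{2}_t$, and since $Y^{3,y}_t = X_t^{(4),x,\cC}-X_t^{(3),x,\cC}$, combining this with the analogous identity for $X^{(3)}$ should isolate $X_t^{(4),x,\cC}$ as a Brownian motion plus a specific linear combination of the $\Lambda^j$'s; the coefficient of the relevant local time will come out to be $\tfrac12$, matching the definition of $v$.

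The key computational step is then an integration-by-parts (Fubini plus the occupation-times/local-time formula) applied to $\E\left[\int_0^\infty e^{-t}\bigl(X_t^{(4),x,\cC}-x^{(4)}\bigr)dt\right]$. Writing $X_t^{(4),x,\cC}-x^{(4)} = M_t + A_t$ with $M$ a martingale (the $dW$ part) and $A$ the finite-variation part built from the local times, the martingale part vanishes after taking expectation (one must justify that $\E\left[\int_0^\infty e^{-t}M_t\,dt\right]=0$, e.g. by Fubini and optional stopping, using that $M$ has zero mean and controlled growth), and for the finite-variation part one uses $\int_0^\infty e^{-t}A_t\,dt = \int_0^\infty \left(\int_s^\infty e^{-t}\,dt\right) dA_s = \int_0^\infty e^{-s}\,dA_s$. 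Feeding in $dA_s = \tfrac12\,d\Lambda^{3,y}_s$ (after the other local-time contributions cancel by the structure of $\Gamma$ and $R$, or are shown to sum to zero in the expression for $X^{(4)}$) yields $\E\left[\int_0^\infty e^{-s}\,dA_s\right] = \tfrac12\,\E\left[\int_0^\infty e^{-s}\,d\Lambda^{3,y}_s\right]$, and a further integration by parts turns $\int_0^\infty e^{-s}\,d\Lambda^{3,y}_s$ into $\int_0^\infty e^{-s}\Lambda^{3,y}_s\,ds$ (the boundary term vanishes since $\Lambda^{3,y}_0=0$ and $e^{-s}\Lambda^{3,y}_s\to 0$). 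This is exactly $v(y_1,y_2,y_3)$ with $y=(x^{(2)}-x^{(1)},x^{(3)}-x^{(2)},x^{(4)}-x^{(3)})$, completing the identity.

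The main obstacle I anticipate is bookkeeping the local-time terms correctly: one must verify that when $X_t^{(4),x,\cC}$ is written out from the rank dynamics, only the local time $\Lambda^{3}$ at the top interface survives with coefficient $\tfrac12$ and all other local-time contributions cancel — this requires carefully tracking the reflection matrix $R$ and the fact that under the comb control the second and fourth ranks carry the volatility while the first and third do not, so collisions at the lower interfaces do not push the top rank. A secondary technical point is the integrability/justification of the Fubini and martingale-vanishing steps, which needs a moment bound on $X_t^{(4),x,\cC}$ (at most linear growth in $\sqrt t$, hence integrable against $e^{-t}$); since the excerpt explicitly flags this section as heuristic ("we will slightly deviate from mathematical rigor"), I would present these steps at the level of formal manipulation, deferring the genuine verification to Section~\ref{s.proof}.
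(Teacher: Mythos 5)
Your proposal is correct and arrives at the right answer, but it takes a genuinely different route from the paper's. The paper avoids ever writing down the SDE for the top-ranked process $X^{(4),x,\cC}$ directly; instead it observes the algebraic identity $Y^{1,y}_t+2Y^{2,y}_t+3Y^{3,y}_t+Y^{4,y}_t=4X^{(4),x,\cC}_t$ (with $Y^{4,y}_t=\sum_j X^{j,x,\cC}_t$) and then reads off, term by term, that $Y^{4,y}$ is a martingale while $d(\sum_{k=1}^3 k\,Y^{k,y}_t)=2\,dW_t+2\,d\Lambda^{3,y}_t$ directly from the reflected decomposition \eqref{eq:decomY}; all other local-time contributions cancel automatically in that linear combination. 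You instead work with $X^{(4),x,\cC}$ itself and posit the rank-based decomposition $dX^{(4),x,\cC}_t=dW_t+\tfrac12\,d\Lambda^{3,y}_t$, which is the Banner--Fernholz--Karatzas type decomposition for the top rank (under the comb volatilities and assuming no triple collisions). That decomposition is indeed correct --- one can check it is the unique one compatible with \eqref{eq:decomY}, $\sigma^{\cC}_4=\sigma^{\cC}_2=1$, $\sigma^{\cC}_3=\sigma^{\cC}_1=0$, and the requirement that each $X^{(k)}$ picks up $+\tfrac12$ of the local time at the interface below it and $-\tfrac12$ at the interface above --- but you do not actually carry out the bookkeeping that verifies the other local times drop out, which you yourself flag as the main obstacle. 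The paper's linear-combination trick is precisely how it sidesteps that bookkeeping and stays within what can be read off from Definition~\ref{def:srbm}. Your two nested integrations by parts (first $\int_0^\infty e^{-t}A_t\,dt=\int_0^\infty e^{-s}\,dA_s$, then back to $\int_0^\infty e^{-s}\Lambda^{3,y}_s\,ds$) undo each other, so you could have just substituted $A_t=\tfrac12\Lambda^{3,y}_t$ into the original integral at the outset. Apart from these stylistic points, your Fubini/martingale-vanishing justifications and the final matching with $\Phi(x)+v(y)$ are correct, and your decision to treat them as formal steps is consistent with the paper's own caveat that this section is heuristic.
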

\begin{proof}
We fix $x\in \R^d$ define $y\in \R_+^3$ with $y:=( x^{(2)}-x^{(1)},x^{(3)}-x^{(2)},x^{(4)}-x^{(3)})$. 
Thanks to our definitions for all $t\geq0$, 
\begin{align*}
 Y^{1,y}_t+2Y^{2,y}_t+3Y^{3,y}_t&=3 X^{(4),x,\cC}_t-X^{(3),x,\cC}_t-X^{(2),x,\cC}_t-X^{(1),x,\cC}_t\\
 &=4 X^{(4),x,\cC}_t-Y^{4,y}_t.
 \end{align*}
Thus, 
\begin{align}\label{eq:aux1}
\E\left[\int_0^\infty e^{-t}\Phi(X^{x,\cC}_t)dt\right]=\frac{1}{4}\E\left[\int_0^\infty e^{-t} Y^{4,y}_tdt\right]+\frac{1}{4}\E\left[\int_0^\infty e^{-t}\sum_{k=1}^{3}k Y^{k,y}_tdt\right].
\end{align}
Note that $Y^{4,y}$ is a martingale, and by differentiation and \eqref{eq:decomY}
 $$d\left(\sum_{k=1}^{3}k Y^{k,y}_t\right)=dW_t+2d\Lambda^{3,y}_t.$$
 Therefore, 
\begin{align}\label{eq:aux2}
\E\left[\int_0^\infty e^{-t}\Phi\left(X^{x,\cC}_t\right)dt\right]=\sum_{i=1}^4 \frac{x_i}{4}+\frac{1}{4}\sum_{i=1}^3iy_i+\frac{1}{2}\E\left[\int_0^\infty e^{-t}\Lambda^{3,y}_tdt\right].
\end{align}
Thus, by the definition of $v$, and a simple algebraic verification for the first two terms on the right, we have the equality \eqref{eq:decomuc}. 
\end{proof}
\begin{remark}
One interpretation of the previous lemma is that the optimal strategy aims to maximize the third component of the local time of a reflected Brownian motion. This is consistent with discrete time problem in the case $N=2$ or $N=3$ where the optimal strategies of the nature is proven to be maximizer of the number of crossings between the leading and the second leading experts \cite{covers,MR3478415}. We note that this strategy also maximize the expected value of $\sum_{k=1}^{3}kY^{k,y}_{\tau}$  where $\tau$ is exponentially distributed. 
\end{remark}

\begin{prop}
The function defined by 
\begin{align}\label{eq:defv}
&v:y\in\R^3_+\mapsto \frac{1}{2}\E\left[\int_0^\infty e^{-t}\Lambda^{3,y}_tdt\right]
\end{align}
 is a viscosity solution of 
\begin{align}\label{eq:pdev}
0=v-\frac{1}{2}\begin{pmatrix}
1\\
-1\\
1
\end{pmatrix}^T \pa^2 v\begin{pmatrix}
1\\
-1\\
1
\end{pmatrix},\mbox{ on }(0,\infty)^3
\end{align}
with the reflection conditions
\begin{align}
\pa_{3} v-\frac{1}{2}\pa_{2}v&=-\frac{1}{2}\mbox{ if }y_{3}=0,\mbox{ and }(y_1,y_2)\in (0,\infty)^2,\label{eq:refl1}\\
\pa_{2} v-\frac{1}{2}(\pa_{1}v+\pa_3v)&=0\mbox{ if }y_{2}=0\mbox{ and }(y_1,y_3)\in (0,\infty)^2,\label{eq:refl2}\\
\pa_{1} v-\frac{1}{2}\pa_{2}v&=0\mbox{ if }y_{1}=0\mbox{ and }(y_2,y_3)\in (0,\infty)^2\label{eq:refl3}.
\end{align}

\end{prop}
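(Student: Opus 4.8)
The plan is to characterize $v$ by a dynamic programming identity for the obliquely reflected process $Y$, and then to read the interior equation \eqref{eq:pdev} and the reflection conditions \eqref{eq:refl1}--\eqref{eq:refl3} off that identity in the viscosity sense, since a priori one does not know that $v$ is smooth. Heuristically, $e^{-t}v(Y^y_t)+\tfrac12\int_0^t e^{-s}\,d\Lambda^{3,y}_s$ should be a martingale, and \eqref{eq:pdev}--\eqref{eq:refl3} are precisely the conditions on $v$ and $\nabla v$ that make the $dt$-drift and each of the three local-time terms of its \Ito--Tanaka expansion vanish; the proof turns this into rigorous viscosity inequalities. Two preliminary facts are used throughout. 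Since, as recalled above, the oblique reflection problem for $R$ has a unique, Lipschitz-continuous solution map, $Y^y$ is unique in law and strong Markov, and $y\mapsto(Y^y,\Lambda^{1,y},\Lambda^{2,y},\Lambda^{3,y})$ depends continuously on $y$, uniformly on compact time intervals; moreover the Lipschitz bound gives $\Lambda^{3,y}_t\le C(|y|+\sup_{s\le t}|W_s|)$, hence $\E[\Lambda^{3,y}_t]=O(|y|+\sqrt t)$, so the integral in \eqref{eq:defv} converges and, by dominated convergence, $v$ is finite and continuous on $\R^3_+$ --- which is all that is needed to state the viscosity property.

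First, the dynamic programming identity. For any stopping time $\tau$ of the filtration of $Y^y$, conditioning on the path of $Y^y$ up to $\tau$ and using that the process of local time accrued after $\tau$ is, by the strong Markov property, the third local time of a reflected Brownian motion started at $Y^y_\tau$, Fubini together with a Stieltjes integration by parts gives
\begin{align*}
v(y)=\frac12\,\E\Big[\int_0^\tau e^{-t}\,d\Lambda^{3,y}_t\Big]+\E\big[e^{-\tau}v(Y^y_\tau)\big].
\end{align*}
For the interior equation, let $\varphi\in C^2$ touch $v$ from above (the touching-from-below case being symmetric) at $y_0\in(0,\infty)^3$, pick $r>0$ with $\overline{B(y_0,r)}\subset(0,\infty)^3$, and use this identity with $\tau$ the exit time of $Y^{y_0}$ from $B(y_0,r)$: on $[0,\tau]$ no face is visited, so all local times are frozen and $Y^{y_0}_t=y_0+(1,-1,1)^\top W_t$, whence $v(y_0)=\E[e^{-\tau}v(Y^{y_0}_\tau)]$. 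Applying \Ito's formula to $e^{-t}\varphi(Y^{y_0}_t)$, using $v\le\varphi$ near $y_0$ with equality at $y_0$, dividing by $\E[\int_0^\tau e^{-t}\,dt]$ and letting $r\downarrow0$ yields $\varphi(y_0)-\tfrac12(1,-1,1)\,\pa^2\varphi(y_0)\,(1,-1,1)^\top\le0$, i.e.\ the subsolution version of \eqref{eq:pdev}.

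The reflection conditions are the crux. Fix $y_0$ in the relative interior of the face $\{y_3=0\}$, so $y_{0,3}=0$ and $y_{0,1},y_{0,2}>0$, choose $r<\min(y_{0,1},y_{0,2})$ so that on $B(y_0,r)$ only $\Lambda^{3,y_0}$ can increase, and let $\tau$ be the exit time of $Y^{y_0}$ from $B(y_0,r)$. \Ito's formula together with the semimartingale decomposition \eqref{eq:decomY} gives, for $\varphi\in C^2$,
\begin{align*}
\E\big[e^{-\tau}\varphi(Y^{y_0}_\tau)\big]-\varphi(y_0)
&=\E\Big[\int_0^\tau e^{-t}\Big(-\varphi+\tfrac12(1,-1,1)\,\pa^2\varphi\,(1,-1,1)^\top\Big)(Y^{y_0}_t)\,dt\Big]\\
&\quad+\E\Big[\int_0^\tau e^{-t}\big(\pa_3\varphi-\tfrac12\pa_2\varphi\big)(Y^{y_0}_t)\,d\Lambda^{3,y_0}_t\Big].
\end{align*}
Feeding this expansion and the inequality $v(Y^{y_0}_\tau)\le\varphi(Y^{y_0}_\tau)$ (valid for $\varphi$ touching $v$ from above) into the dynamic programming identity, and using $v(y_0)=\varphi(y_0)$, produces
\begin{align*}
0&\le\E\Big[\int_0^\tau e^{-t}\big(\tfrac12+\pa_3\varphi-\tfrac12\pa_2\varphi\big)(Y^{y_0}_t)\,d\Lambda^{3,y_0}_t\Big]\\
&\quad+\E\Big[\int_0^\tau e^{-t}\big(-\varphi+\tfrac12(1,-1,1)\,\pa^2\varphi\,(1,-1,1)^\top\big)(Y^{y_0}_t)\,dt\Big].
\end{align*}
If at $y_0$ one had simultaneously $\varphi-\tfrac12(1,-1,1)\pa^2\varphi(1,-1,1)^\top>0$ and $\pa_3\varphi-\tfrac12\pa_2\varphi<-\tfrac12$, then by continuity both integrands would be bounded above by $-c<0$ on $B(y_0,r)$ for $r$ small; since $Y^{3,y_0}$ starts at $0$ and, before $\Lambda^{2,y_0}$ becomes active, coincides with the Skorokhod reflection of $W$ at $0$, whose local time at $0$ is a.s.\ strictly positive on every $(0,\veps]$, one has $\E[\int_0^\tau e^{-t}\,d\Lambda^{3,y_0}_t]>0$, so the right-hand side is $<0$ --- a contradiction. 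Hence $\min\big(\varphi(y_0)-\tfrac12(1,-1,1)\pa^2\varphi(y_0)(1,-1,1)^\top,\ \tfrac12\pa_2\varphi(y_0)-\pa_3\varphi(y_0)-\tfrac12\big)\le0$, which is the viscosity form of \eqref{eq:pdev} and \eqref{eq:refl1}; the corresponding $\max$-inequality at a local minimum follows by the mirror computation. The faces $\{y_2=0\}$ and $\{y_1=0\}$ are treated identically, with $r$ chosen so that only $\Lambda^{2,y_0}$, resp.\ $\Lambda^{1,y_0}$, is active; by \eqref{eq:decomY} the pertinent $d\Lambda$-coefficients are $\pa_2\varphi-\tfrac12(\pa_1\varphi+\pa_3\varphi)$ and $\pa_1\varphi-\tfrac12\pa_2\varphi$, and there is no ``$\tfrac12$'' reward term (as $v$ integrates only $\Lambda^3$), which gives \eqref{eq:refl2} and \eqref{eq:refl3}.

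I expect this last step to be the main obstacle. One must orient the viscosity Neumann operators according to the columns of $R$, i.e.\ the inward reflection directions, so that the sub- and supersolution inequalities come out with consistent signs; one must justify both the Stieltjes integration by parts and the vanishing of the stochastic-integral term over the random interval $[0,\tau]$; and one must exploit the structural fact that the local time accrued before exiting $B(y_0,r)$ is of order $r$ whereas the elapsed time is only of order $r^2$, so that at a boundary point it is the reflection coefficient, and not the elliptic-plus-discount part, that must balance. The remaining ingredients --- finiteness and continuity of $v$, uniqueness and the strong Markov property of $Y$ --- are exactly what the hypothesis on $R$ already used to construct $Y$ provides.
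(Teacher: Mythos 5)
Your proposal is correct and constitutes a genuinely more explicit route to the same result. The paper's proof introduces the auxiliary function $\tilde v(y)=\frac14\sum_{i=1}^3 i\,y_i+v(y)$, which by \eqref{eq:aux1}--\eqref{eq:aux2} equals $\frac14\E\bigl[\int_0^\infty e^{-t}\sum_k k\,Y^{k,y}_t\,dt\bigr]$, a pure discounted running-cost functional with no boundary reward; for such a value function the reflection conditions are the \emph{homogeneous} Neumann conditions $\nabla\tilde v\cdot r_j=0$ on each face, and the $-\frac12$ in \eqref{eq:refl1} then drops out of the affine substitution $\tilde v\mapsto v$. You instead work with $v$ directly, treating $\frac12\,d\Lambda^{3}$ as a reward accrued on the face $\{y_3=0\}$; the dynamic programming identity together with the \Ito{}/Tanaka expansion of a $C^2$ test function produces the inhomogeneous Neumann datum on that face and the homogeneous ones elsewhere. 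Both routes rest on the same DPP and strong Markov property; yours trades the algebraic convenience of all-homogeneous boundary conditions for a transparent account of where the $-\frac12$ comes from, and it additionally spells out the contradiction step that converts the DPP inequality into the viscosity $\min$/$\max$ inequality at boundary points, something the paper (in a section it explicitly labels as non-rigorous) only sketches. One small remark: the comparison of scales (local time $\sim r$ versus elapsed time $\sim r^2$ in a ball of radius $r$) at the end of your argument is good intuition for why the reflection condition is the binding one, but it is not actually needed once you have the $\min$-inequality form you derived; the contradiction argument already captures it.
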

\begin{proof}
We introduce the auxiliary function 
\begin{align}\label{eq:deftildev}
\tilde v(y)=\frac{1}{4}\sum_{i=1}^3i y_i + v(y)\mbox{ for all }y\in \R^3_+.
\end{align}
Thanks to \eqref{eq:aux1} and \eqref{eq:aux2}, 
$\tilde v(y)=\frac{1}{4}\E\left[\int_0^\infty e^{-t}\sum_{k=1}^{3}k Y^{k,y}_tdt\right]$.
For all stopping time $\tau\geq 0$, the dynamic programming principle leads to 
$$\tilde v(y_1,y_2,y_3)=\E\left[\int_0^\tau\frac{  e^{-t}}{4}\sum_{k=1}^{3}k Y^{k,y}_tdt +e^{-\tau}\tilde v(Y_\tau^{y})\right].$$
Using the martingality of $\cY^y$ on $(0,\infty)^3$, we obtain that on $(0,\infty)^3$, 
$$0=\tilde v-\frac{  1}{4}\sum_{k=1}^{3}k y_k-\frac{1}{2}\begin{pmatrix}
1\\
-1\\
1
\end{pmatrix}^T \pa^2 \tilde v\begin{pmatrix}
1\\
-1\\
1
\end{pmatrix}=v-\frac{1}{2}\begin{pmatrix}
1\\
-1\\
1
\end{pmatrix}^T \pa^2  v\begin{pmatrix}
1\\
-1\\
1
\end{pmatrix}.
$$
For $\tilde v$ the reflection conditions are 
\begin{align*}
\pa_{3}\tilde v-\frac{1}{2}\pa_{2} \tilde v&=0, \quad \mbox{ if }y_{3}=0,\mbox{ and }(y_1,y_2)\in (0,\infty)^2;\\
\pa_{2}\tilde v-\frac{1}{2}(\pa_{1}\tilde v+\pa_3\tilde v)&=0, \quad \mbox{ if }y_{2}=0\mbox{ and }(y_1,y_3)\in (0,\infty)^2;\\
\pa_{1} \tilde v-\frac{1}{2}\pa_{2} \tilde v&=0, \quad \mbox{ if }y_{1}=0\mbox{ and }(y_2,y_3)\in (0,\infty)^2.
\end{align*}
Thanks to \eqref{eq:deftildev}, this yields \eqref{eq:pdev}-\eqref{eq:refl3} .
\end{proof}

\section{Characterization of the value on the reflection boundary}\label{s.character}
We now characterize the function $v$ via a system of hyperbolic first order PDE.
\subsection{The value of $ v$ for $y_1=y_3$}
We start by characterizing $v$ on the set $y_1=y_3$.
\begin{prop}\label{prop:diag}
The value function $v$ satisfies
 $$v(y_1,y_2,y_1)=V(y_1,y_2),\mbox{ for }y_1\geq 0, \, y_2\geq 0,$$
 where 
 \begin{align}\label{eq:valueondiag}
 V(y_1,y_2):=\frac{1}{2}\E\left[\int_0^\infty e^{-t}\Lambda^{1,(y_1,y_2)}_tdt\right],
 \end{align}
and $\Lambda^{1,(y_1,y_2)}$ is the local time at 0 of the first component of the two dimensional obliquely reflected Brownian Motion 
\  $(Z^{1,(y_1,y_2)}_t,Z^{2, (y_1,y_2)}_t)$ defined by
 \begin{equation}\label{eq:redrpm}
 \begin{split}
dZ^{1,(y_1,y_2)}_t&=dW_t+d\Lambda^{1,(y_1,y_2)}_t-\frac{1}{2}d\Lambda^{2, (y_1,y_2)}_t,\\
dZ^{2,(y_1,y_2)}_t&=-dW_t+d\Lambda^{2, (y_1,y_2)}_t-d\Lambda^{1, (y_1,y_2)}_t.
\end{split}
\end{equation}

Additionally, for all $y\in \R_+^2$, we have 
\begin{align}\label{eq:valueV}
V(y_1,y_2)&=\frac{\sqrt{2}}{2}\cosh(\sqrt{2}y_1)\cosh(\sqrt{2}(y_1+y_2))\arctan\left(e^{-\sqrt{2}(y_1+y_2)}\right)
\\&-\frac{\sqrt{2}}{4}\sinh(\sqrt{2}y_1) \nonumber.
\end{align}
\end{prop}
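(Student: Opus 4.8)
The plan is to proceed in three steps: (i) exploit a reflection symmetry to collapse the three--dimensional reflected process on $\{y_1=y_3\}$ onto the two--dimensional process $(Z^{1},Z^{2})$ of \eqref{eq:redrpm}, which yields $v(y_1,y_2,y_1)=V(y_1,y_2)$; (ii) identify the first order PDE and the two oblique boundary conditions satisfied by $V$; (iii) integrate that boundary value problem to produce \eqref{eq:valueV}. As already announced for Sections~\ref{s.oblique}--\ref{s.character} the manipulations are formal; their outcome is a candidate whose rigorous verification is carried out in Section~\ref{s.proof}.

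\emph{Step (i).} Both $\Gamma$ and $R$ of Definition~\ref{def:srbm} are invariant under the permutation of $\R^3$ exchanging the first and third coordinates. Hence, for $y$ with $y_1=y_3$, the process $D_t:=Y^{3}_t-Y^{1}_t$ satisfies $D_0=0$ and, by \eqref{eq:decomY}, $dD_t=d\Lambda^{3}_t-d\Lambda^{1}_t$; since $\Lambda^{3}$ (resp. $\Lambda^{1}$) increases only when $Y^{3}_t=0$ (resp. $Y^{1}_t=0$), on $\{D_t>0\}$ one has $Y^{3}_t>0$, so $dD_t=-d\Lambda^{1}_t\le 0$, and on $\{D_t<0\}$ one has $Y^{1}_t>0$, so $dD_t=d\Lambda^{3}_t\ge 0$; thus $|D|$ is nonincreasing and therefore $D\equiv 0$ and $\Lambda^{3}\equiv\Lambda^{1}$. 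Substituting $\Lambda^{3}=\Lambda^{1}$ into \eqref{eq:decomY} shows that $(Y^{1},Y^{2})$ solves \eqref{eq:redrpm} with $\Lambda^{1}$ the local time of $Y^{1}$ at $0$; uniqueness for this two--dimensional reflection problem — the matrix $\bigl(\begin{smallmatrix}1&-1/2\\-1&1\end{smallmatrix}\bigr)-I$ has eigenvalues of modulus strictly less than $1$ — identifies $(Y^{1},Y^{2},\Lambda^{1})$ in law with $(Z^{1,(y_1,y_2)},Z^{2,(y_1,y_2)},\Lambda^{1,(y_1,y_2)})$. Comparing \eqref{eq:defv} with \eqref{eq:valueondiag} then gives $v(y_1,y_2,y_1)=V(y_1,y_2)$.

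\emph{Step (ii).} From \eqref{eq:redrpm} one computes $d(2Z^{1}+Z^{2})=dW+d\Lambda^{1}$, so the only bounded variation contribution to $2Z^{1}+Z^{2}$ is a multiple of $\Lambda^{1}$; consequently
$$\tilde V(y):=y_1+\tfrac12 y_2+V(y)=\tfrac12\,\E\Bigl[\int_0^\infty e^{-t}\bigl(2Z^{1}_t+Z^{2}_t\bigr)\,dt\Bigr].$$
Writing the dynamic programming principle for $\tilde V$ and applying Itô's formula (using $d\langle Z^{1}\rangle=d\langle Z^{2}\rangle=-d\langle Z^{1},Z^{2}\rangle=dt$), then substituting back $\tilde V=y_1+\tfrac12 y_2+V$, I would obtain
\begin{align*}
V-\tfrac12(\pa_1-\pa_2)^2V&=0\qquad\text{on }(0,\infty)^2,\\
(\pa_1-\pa_2)V&=-\tfrac12\qquad\text{on }\{y_1=0,\ y_2>0\},\\
\pa_2V-\tfrac12\pa_1V&=0\qquad\text{on }\{y_2=0,\ y_1>0\},
\end{align*}
the boundary relations being the reflection conditions attached to the columns $(1,-1)^\top$ and $(-1/2,1)^\top$ of the two--dimensional reflection matrix, the first one inhomogeneous because $\Lambda^{1}$ enters the payoff \eqref{eq:valueondiag}.

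\emph{Step (iii).} In the coordinates $r=y_1$, $s=y_1+y_2$ (so the domain is $\{0\le r\le s\}$ and $\pa_1-\pa_2=\pa_r$) the equation reads $\pa_r^2V=2V$, hence $V=A(s)\cosh(\sqrt2\,r)+B(s)\sinh(\sqrt2\,r)$. The condition $\pa_rV=-\tfrac12$ at $r=0$ forces $B(s)\equiv -\tfrac{\sqrt2}{4}$; the condition at $\{y_2=0\}$, which becomes $\pa_sV=\pa_rV$ on $\{r=s\}$, yields the linear ODE $A'(s)-\sqrt2\tanh(\sqrt2 s)A(s)=-\tfrac12$, which I would solve with the integrating factor $1/\cosh(\sqrt2 s)$ and $\int\frac{ds}{\cosh(\sqrt2 s)}=\sqrt2\arctan(e^{\sqrt2 s})+\mathrm{const}$, giving $A(s)=\cosh(\sqrt2 s)\bigl(C-\tfrac{\sqrt2}{2}\arctan(e^{\sqrt2 s})\bigr)$. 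The integration constant is pinned down by the linear growth of $v$ inherited from that of $u$ in Proposition~\ref{prop:dppud}: on the ray $\{y_2=0\}$ the coefficient of the exponentially growing mode $e^{2\sqrt2 r}$ must vanish, forcing $C=\tfrac{\sqrt2\pi}{4}$; then $\arctan(e^{\sqrt2 s})=\tfrac\pi2-\arctan(e^{-\sqrt2 s})$ turns $A(s)$ into $\tfrac{\sqrt2}{2}\cosh(\sqrt2 s)\arctan(e^{-\sqrt2 s})$, and returning to $(y_1,y_2)$ produces exactly \eqref{eq:valueV}.

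\emph{Where the difficulty lies.} The calculations above are routine; the genuine obstacles are conceptual and coincide with the non-rigorous points flagged for these sections. First, one must pass from the possibly ill-posed SDE system \eqref{eq:combcontrol} for the comb-controlled state to the well-posed obliquely reflected process, and then read off the correct oblique — and inhomogeneous — boundary conditions for $V$. Second, the use of Itô's formula and of the dynamic programming principle presupposes that $\tilde V$ is $C^{2}$ up to the boundary, which is not available a priori. Both difficulties are circumvented by the logical architecture of the paper: Section~\ref{s.proof} checks directly that the function \eqref{eq:u} assembled from \eqref{eq:valueV} is a genuine $C^2$ solution of \eqref{eq:pdegeo}, so the role of the present derivation is only to exhibit the right candidate.
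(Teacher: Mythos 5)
Your proof is correct and follows the same overall architecture as the paper's: reduce to the two-dimensional reflected process by the $1\leftrightarrow 3$ symmetry, extract the degenerate equation along the direction $(1,-1)$ together with the two oblique boundary conditions, integrate the resulting family of ODEs along the characteristics, and pin down the integration constant from sub-exponential growth. Your Step~(i), via the monotone decrease of $|Y^{3}_t-Y^{1}_t|$, is a hands-on variant of the paper's appeal to the invariance of $(\Gamma,R)$ under that permutation plus uniqueness, and is arguably more self-contained. The only genuine deviation is in the integration step. The paper writes the solution along a characteristic through the exit-time identity \eqref{eq:expV}, i.e.\ in terms of the boundary traces $V_1=V(\cdot,0)$ and $V_2=V(0,\cdot)$, then differentiates \eqref{eq:expV} at each face and substitutes the reflection conditions to produce a coupled first-order system for $(V_1,V_2)$, which it eliminates into \eqref{eq:V1}. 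You instead pass to $(r,s)=(y_1,y_1+y_2)$ and the $\cosh/\sinh$ basis, so that the condition at $r=0$ fixes $B(s)\equiv-\tfrac{\sqrt2}{4}$ outright and the condition at $r=s$ closes a single first-order ODE for $A(s)$, with no elimination step. These are equivalent linear manipulations of the same two-parameter family of solutions on each leaf $\{s=\mathrm{const}\}$; yours is slightly cleaner as a standalone computation, while the paper's $(V_1,V_2)$ formulation is the one that scales to the boundary system \eqref{syst:hf}--\eqref{syst:hr} later on. One small remark in your favor: your interior equation carries the correct factor $\tfrac12$ in front of $(\pa_1-\pa_2)^2$; the displayed PDE \eqref{eq:pdeV} in the paper omits it, but this is a typo that does not propagate since the paper works from \eqref{eq:expV}, which already has the $\sqrt2$ rate built in -- and your derivation confirms that the $\tfrac12$ is what produces $\cosh(\sqrt2\,\cdot)$ in \eqref{eq:valueV}.
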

\begin{proof}
If $y_1=y_3$ it is clear due to the uniqueness of the solution of the oblique reflection problem \eqref{eq:decomY} that for all 
$$Y^{1,y}_t=Y^{3,y}_t\mbox{ for all }t\geq 0 $$
and the couple $(Y^{1,y}_t,Y^{2,y}_t)$ solves the reflection problem \eqref{eq:redrpm}.
Thus, \eqref{eq:valueondiag} holds. 
Additionally, using \eqref{eq:valueondiag} we can derive the following dynamic programming equations for all $(y_1,y_2)\in(0,\infty)^2$,
\begin{align}\label{eq:pdeV}
V(y_1,y_2)-\begin{pmatrix}
1 \\
-1\end{pmatrix}^T \pa^2 V(y_1,y_2)\begin{pmatrix}
1\\
-1
\end{pmatrix}&=0, \\
\pa_1 V(0,y_2)-\pa_2 V(0,y_2)&=-\frac{1}{2}, \label{eq:pdeV2} \\
 \frac{\pa_1 V(y_1,0)}{2}-\pa_2 V(y_1,0)&=0. \label{eq:pdeV3}
 \end{align}
 First, we compute the functions
$$V_1(x):=V(x,0)\mbox{ and }V_2(x):=V(0,x).$$
Let $y_1>0$ and $y_2>0$ and define 
$$\tau:=\tau_1\wedge \tau_2,$$
where $\tau_1:=\inf\{t\geq 0: W_t\leq -y_1\}$ and $\tau_2:=\inf\{t\geq 0: W_t\geq y_2\}$.
Then, by the dynamic programming principle
\begin{align}
V(y_1,y_2)&:=\E\left[e^{-\tau}V(\cY_\tau^1,\cY^2_\tau)\right]\\
&=\E\left[e^{-\tau}\1_{\tau_1<\tau_2}V_2(y_1+y_2)\right]+\E\left[e^{-\tau}\1_{\tau_1>\tau_2}V_1(y_1+y_2)\right]\notag\\
&=\frac{\sinh(\sqrt{2}y_2)}{\sinh(\sqrt{2}(y_1+y_2))}V_2(y_1+y_2)+\frac{\sinh(\sqrt{2}y_1)}{\sinh(\sqrt{2}(y_1+y_2))}V_1(y_1+y_2)\label{eq:expV}
\end{align}
Assuming $V$ is smooth we differentiate this equality in $y_1$, then in the expression we send $y_1\to 0$ for $y_2>0$ fixed to obtain 
$$\pa_1 V(0,y_2)=\frac{\sqrt{2}}{\sinh(\sqrt{2}y_2)}V_1(y_2)-\frac{\sqrt{2}}{\tanh(\sqrt{2}y_2)}V_2(y_2)+V_2'(y_2).$$
One of the main point of the paper is the fact that the equality \eqref{eq:pdeV2} allows us to eliminate $\pa_1 V(0,y_2)$ so that we can write a system of differential equations for $V_1$ and $V_2$ as follows
$$\pa_2 V_2(0,y_2)-\frac{1}{2}=V_2'(y_2)-\frac{1}{2}=\frac{\sqrt{2}}{\sinh(\sqrt{2}y_2)}V_1(y_2)-\frac{\sqrt{2}}{\tanh(\sqrt{2}y_2)}V_2(y_2)+V_2'(y_2).$$
Similarly, differentiating \eqref{eq:expV} in $y_2$ and taking the limit as $y_2\to 0$, we obtain that 
$$\pa_2 V(y_1,0)=-\frac{\sqrt{2}}{\tanh(\sqrt{2}y_1)}V_1(y_1)+\frac{\sqrt{2}}{\sinh(\sqrt{2}y_1)}V_2(y_1)+V_1'(y_1).$$
Additionally, the reflection conditions at \eqref{eq:pdeV3} yield
$$\frac{ V_1'(y_1)}{2}=\pa_2 V(y_1,0)=-\frac{\sqrt{2}}{\tanh(\sqrt{2}y_1)}V_1(y_1)+\frac{\sqrt{2}}{\sinh(\sqrt{2}y_1)}V_2(y_1)+V_1'(y_1).$$
Combining both equalities we find that $(V_1,V_2)$ solves the system
 \begin{align}\label{eq:syst1}
-\frac{1}{2}&=\frac{\sqrt{2}}{\sinh(\sqrt{2}x)}V_1(x)-\frac{\sqrt{2}}{\tanh(\sqrt{2}x)}V_2(x),\\
0&=-\frac{\sqrt{2}}{\tanh(\sqrt{2}x)}V_1(x)+\frac{\sqrt{2}}{\sinh(\sqrt{2}x)}V_2(x)+\frac{V_1'(x)}{2}.
 \end{align}
 Combining the two equalities we obtain that $V_1$ is a solution to 
  \begin{align}\label{eq:V1}
0&=\frac{1}{\cosh(\sqrt{2}x)}-2\sqrt{2}\tanh(\sqrt{2}x)V_1(x)+{V_1'(x)}.
 \end{align}
 Given the antiderivative of the hyperbolic tangent, the solution to the homogeneous part of \eqref{eq:V1} is $x\mapsto \cosh^2(\sqrt {2}x)$. Thus, we solve \eqref{eq:V1} under the form  
 $$V_1(x)=H(x)\cosh^2(\sqrt {2}x),$$
 which imposes $H'(x)=\frac{-1}{\cosh^3(\sqrt {2}x)}$.
 Thus, for some constant $C$, $V_1$ is
$$V_1(x)=\left(C-\frac{1}{\sqrt{2}}\arctan\left(\tanh\left(\frac{x}{\sqrt{2}}\right)\right)\right)\cosh^2(\sqrt{2}x)-\frac{\sinh(\sqrt{2}x)}{2\sqrt{2}}.$$
With the choice $C=\frac{\pi}{4\sqrt{2}}$ we obtain that 
\begin{align}\label{eq:solV1}
V_1(x)=\frac{1}{\sqrt{2}}\left(\frac{\pi}{4}-\arctan\left(\tanh\left(\frac{x}{\sqrt{2}}\right)\right)\right)\cosh^2(\sqrt{2}x)-\frac{\sinh(\sqrt{2}x)}{2\sqrt{2}}
\end{align}
is the unique bounded solution to \eqref{eq:V1}. Indeed, given the properties of the Gudermannian function, and arctan we have
\begin{align*}
\frac{\pi}{4}-\arctan\left(\tanh\left(\frac{x}{\sqrt{2}}\right)\right)&=\frac{\pi}{2}-\arctan\left(e^{\sqrt{2}x}\right)\\
&=\arctan\left(e^{-\sqrt{2}x}\right)=e^{-\sqrt{2}x}+o(e^{-2\sqrt{2}x})
\end{align*}
as $x\to \infty$. 
Thus, as $x\to \infty$, 
\begin{align*}
&\frac{1}{\sqrt{2}}\left(\frac{\pi}{4}-\arctan\left(\tanh\left(\frac{x}{\sqrt{2}}\right)\right)\right)\cosh^2(\sqrt{2}x)-\frac{\sinh(\sqrt{2}x)}{2\sqrt{2}}\\
&\qquad=\frac{1}{\sqrt{2}}\left(e^{-\sqrt{2}x}+o(e^{-2\sqrt{2}x})\right)\left(\frac{1}{4}e ^{2\sqrt{2}x}+O(1)\right)-\frac{1}{4\sqrt{2}}\left(e ^{\sqrt{2}x}+O(1)\right)
\\ &=O(1)
\end{align*}
which shows that \eqref{eq:solV1} is the unique bounded solution to \eqref{eq:V1}.
Injecting this into \eqref{eq:syst1} and further simplifying we obtain that 
\begin{align*}
V_1(x)&=\frac{1}{\sqrt{2}}\arctan\left(e^{-\sqrt{2}x}\right)\cosh^2(\sqrt{2}x)-\frac{\sinh(\sqrt{2}x)}{2\sqrt{2}},\\
V_2(x)&=\frac{1}{\sqrt{2}}\arctan\left(e^{-\sqrt{2}x}\right)\cosh(\sqrt{2}x).
\end{align*}
Thanks to \eqref{eq:expV}, this finally yields \eqref{eq:valueV}.
\end{proof}
\subsection{Deriving a Hyperbolic system to characterize the value on the boundary}

We now return to the computation of $v$ defined at \eqref{eq:defv} on $\R_+^3$. In order to compute $v$ on the whole domain we first characterize its value on the boundary of this domain. For this purpose, we define for $x,y\geq 0$, 
\begin{align}
f(x,y)&= v\left(0,\frac{x}{\sqrt{2}},\frac{y}{\sqrt{2}}\right)\label{eq:deffrh1},\\
r_1(x,y)&= v\left(\frac{x}{\sqrt{2}},0,\frac{y+x}{\sqrt{2}}\right)\label{eq:deffrh2},\\
h(x,y)&= v\left(\frac{y}{\sqrt{2}},\frac{x}{\sqrt{2}},0\right)-\frac{1}{2\sqrt{2}}\left(1+\frac{e^{-2x}}{3}\right),\label{eq:deffrh3}\\
r_2(x,y)&= v\left(\frac{x+y}{\sqrt{2}},0,\frac{x}{\sqrt{2}}\right)-\frac{2}{3\sqrt{2}}e^{-x}\label{eq:deffrh4}.
\end{align}
The next proposition provides a characterization of these functions and allows us to compute the value function everywhere.
\begin{prop}\label{prop:boundary}
The couples $(f,r_1)$ and $(h,r_2)$ solve the same system of hyperbolic equations on $(0,\infty)^2$
\begin{align}
(\pa_x-2\pa_y)f(x,y)&=\frac{2}{\tanh{x}}f(x,y)-\frac{2}{\sinh{x}}r_1(x,y)\label{syst:hf},\\
\pa_x r_1(x,y)&=-\frac{2}{\sinh{x}}f(x,y)+\frac{2}{\tanh{x}}r_1(x,y)\label{syst:hr},
\end{align}
with the compatibility conditions
\begin{align*}
f(0,y)=r_1(0,y),\, h(0,y)=r_2(0,y)\mbox{ for }y> 0,
\end{align*}
and initial conditions
\begin{align*}
f(x,0)&=\frac{1}{\sqrt{2}}\arctan\left(e^{-x}\right)\cosh(x),\\
r_1(x,0)&=\frac{1}{\sqrt{2}}\arctan\left(e^{-x}\right)\cosh^2(x)-\frac{\sinh(x)}{2\sqrt{2}},\\
h(x,0)&=\frac{1}{\sqrt{2}}\arctan\left(e^{-x}\right)\cosh(x)-\frac{1}{2\sqrt{2}}\left(1+\frac{e^{-2x}}{3}\right),\\
r_2(x,0)&=\frac{1}{\sqrt{2}}\arctan\left(e^{-x}\right)\cosh^2(x)-\frac{\sinh(x)}{2\sqrt{2}}-\frac{2}{3\sqrt{2}}e^{-x}\mbox{ for }x> 0.
\end{align*}
\end{prop}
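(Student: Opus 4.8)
The plan is to derive the hyperbolic system \eqref{syst:hf}--\eqref{syst:hr} for the pair $(f,r_1)$ by exactly mimicking the argument of Proposition~\ref{prop:diag}, but now carried out on the two boundary faces $\{y_1=0\}$ and $\{y_2=0\}$ of the octant $\R^3_+$ instead of on the diagonal $\{y_1=y_3\}$. First I would fix the transversal coordinate $y_3=y/\sqrt2$ and view $v(\,\cdot\,,\,\cdot\,,y_3)$ as a function of $(y_1,y_2)$ solving the two–dimensional reflected problem \eqref{eq:pdev} with the reflection conditions \eqref{eq:refl2}, \eqref{eq:refl3} on the edges $\{y_2=0\}$ and $\{y_1=0\}$; the condition \eqref{eq:refl1} on $\{y_3=0\}$ only contributes through the initial data at $y=0$. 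Exactly as in \eqref{eq:expV}, I would run the process $(Y^{1,y},Y^{2,y})$ until the first time the driving Brownian motion exits an interval — here until $Y^{2,y}$ hits $0$ (reaching the face defining $r_1$) or $Y^{1,y}$ hits $0$ (reaching the face defining $f$) — and write $v$ at an interior point as a convex combination, with the usual $\sinh(\sqrt2\,\cdot)/\sinh(\sqrt2\,\cdot)$ harmonic weights, of the boundary values $f$ and $r_1$ evaluated at shifted arguments. The drift term $-2\pa_y$ in \eqref{syst:hf} is precisely the signature of the fact that when $Y^{2,y}$ hits zero the third coordinate $Y^{3,y}$ has been displaced, via the reflection term $-\tfrac12 d\Lambda^2$ in \eqref{eq:decomY}, so the new value is read off $r_1$ at a point whose $y$–coordinate has moved; tracking this shift carefully is what produces the characteristic direction $\pa_x-2\pa_y$.

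Next I would differentiate the dynamic programming identity in $y_1$ and let $y_1\downarrow0$, and separately differentiate in $y_2$ and let $y_2\downarrow0$, to get two relations between $\pa_1 v$, $\pa_2 v$ and the boundary traces; then I would use the reflection conditions \eqref{eq:refl2} and \eqref{eq:refl3} to eliminate the normal derivatives $\pa_1 v|_{y_1=0}$ and $\pa_2 v|_{y_2=0}$, leaving a closed first–order system in $f$ and $r_1$ alone. Rescaling $x=\sqrt2\,y_1$ (so that $\tanh(\sqrt2 y_1)\to\tanh x$ etc.) turns the coefficients into the $\tfrac{2}{\tanh x}$ and $\tfrac{2}{\sinh x}$ appearing in \eqref{syst:hf}--\eqref{syst:hr}. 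For the pair $(h,r_2)$ I would do the same computation on the two faces adjacent to $\{y_3=0\}$, i.e.\ $\{y_2=0\}$ and $\{y_3=0\}$; here the subtraction of the explicit terms $\tfrac{1}{2\sqrt2}(1+e^{-2x}/3)$ in \eqref{eq:deffrh3} and $\tfrac{2}{3\sqrt2}e^{-x}$ in \eqref{eq:deffrh4} is engineered precisely to absorb the inhomogeneous contribution of the reflection condition \eqref{eq:refl1} (the $-\tfrac12$ on the right of \eqref{eq:refl1}), so that after these shifts $(h,r_2)$ satisfies the \emph{same} homogeneous system as $(f,r_1)$. I would verify this by checking that the explicit corrector $\tfrac{1}{2\sqrt2}(1+e^{-2x}/3)$, resp.\ $\tfrac{2}{3\sqrt2}e^{-x}$, when fed through the would-be system, exactly cancels the extra $-\tfrac12$; this is the motivation for those particular constants $\tfrac12$ and $\tfrac13$.

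The compatibility conditions $f(0,y)=r_1(0,y)$ and $h(0,y)=r_2(0,y)$ are immediate from \eqref{eq:deffrh1}--\eqref{eq:deffrh4}, since both sides equal $v$ evaluated at the common edge $\{y_1=y_2=0\}$ (for the first pair) resp.\ $\{y_2=y_3=0\}$ (for the second), once one checks that the explicit correctors also agree there — e.g.\ at $x=0$ the term $\tfrac{2}{3\sqrt2}e^{-x}$ equals $\tfrac{2}{3\sqrt2}$, which matches the $y$–independent part of $\tfrac{1}{2\sqrt2}(1+e^{-2\cdot 0}/3)=\tfrac{1}{2\sqrt2}\cdot\tfrac43=\tfrac{2}{3\sqrt2}$, giving $h(0,y)=r_2(0,y)$. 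The initial conditions at $y=0$ come directly from Proposition~\ref{prop:diag}: the edge $\{y_2=0,\ y_3=0\}$ lies on the diagonal face, so $f(x,0)=v(0,x/\sqrt2,0)$, which by the $y_1=y_3$ identity equals $V_2(x/\sqrt2)$ up to the rescaling, and similarly $r_1(x,0)$, $h(x,0)$, $r_2(x,0)$ are read off $V_1$, $V_2$ at the appropriate arguments, after subtracting the correctors; plugging in the closed forms for $V_1,V_2$ from the proof of Proposition~\ref{prop:diag} yields the stated $\arctan(e^{-x})\cosh x$ and $\arctan(e^{-x})\cosh^2 x-\sinh(x)/(2\sqrt2)$ expressions. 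The main obstacle, and the step that needs the most care, is the bookkeeping of the transversal shift in the $y$–variable when passing from the interior to the face $\{y_2=0\}$: one must account simultaneously for the change of $Y^{3,y}$ through $-\tfrac12\Lambda^2$ and for the direction in which $r_1$ must be evaluated, and it is exactly this that forces the transport operator $\pa_x-2\pa_y$ rather than a pure ODE in $x$; getting the coefficient $2$ (and not $1$ or $\tfrac12$) right is where an error would most easily creep in, so I would double-check it by an independent infinitesimal/generator computation on \eqref{eq:decomY}.
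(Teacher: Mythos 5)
Your plan is, in its skeleton, the same as the paper's: run the reflected process from an interior point until it first hits a face, write the resulting dynamic–programming identity with $\sinh$ exit weights (the paper's \eqref{eq:v13} and \eqref{eq:v31}), differentiate in the coordinate that is being sent to zero, eliminate the emerging normal derivative $\pa_1 v|_{y_1=0}$ (resp.\ $\pa_2 v|_{y_2=0}$) with the oblique reflection conditions \eqref{eq:refl3} and \eqref{eq:refl2}, rescale $x=\sqrt2\,y_1$, and subtract an explicit corrector so that the $-\tfrac12$ inhomogeneity of \eqref{eq:refl1} is absorbed and $(h,r_2)$ obeys the same homogeneous system. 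Your verification that the correctors $\tfrac{1}{2\sqrt2}(1+e^{-2x}/3)$ and $\tfrac{2}{3\sqrt2}e^{-x}$ do this, and your check of the compatibility condition at $x=0$, and your identification of the initial data with $V_1,V_2$ from Proposition~\ref{prop:diag}, are all in order.

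There is however one conceptual misattribution worth flagging. The transport term $-2\pa_y$ in \eqref{syst:hf} does \emph{not} come from the reflection coefficient $-\tfrac12\,d\Lambda^2$ in $dY^3_t$. In the dynamic–programming identity the process is stopped at the first hit of $\{Y^1=0\}\cup\{Y^2=0\}$, so no local time has accrued and the third coordinate moves only through the shared driving Brownian motion: $Y^3$ comoves with $Y^1$ and moves opposite to $Y^2$, which is why $v(0,y_2+y_1,y_3-y_1)$ appears in \eqref{eq:v13} and why differentiating in $y_1$ produces $(\pa_2-\pa_3)v$. The factor $2$ is then generated by the reflection condition on the face $\{y_1=0\}$, i.e. $\pa_1 v=\tfrac12\pa_2 v$, which traces back to the $-\tfrac12\,d\Lambda^1$ entering $dY^2_t$ (the first column of $R$), not to the $-\tfrac12\,d\Lambda^2$ entering $dY^3_t$. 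A related point you leave implicit: the split of the face $\{y_2=0\}$ into the sub-regions $\{y_1\le y_3\}$ and $\{y_3\le y_1\}$ is forced, because the first exit time depends on whether $Y^1$ or $Y^3$ can reach zero first during the free run; this is why the boundary trace on $\{y_2=0\}$ must be encoded by two different functions $r_1$ and $r_2$ (with oppositely oriented shifts $y_3\mapsto y_3-y_1$ versus $y_1\mapsto y_1-y_3$) rather than one. Neither of these affects the validity of the end result, but carrying out the computation as you describe without this correction would likely get the sign or argument shift of $\pa_3 v$ wrong on the first pass.
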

\begin{remark}
In the definition of $h$ and $r_2$ the terms $\frac{1}{2\sqrt{2}}\left(1+\frac{e^{-2x}}{3}\right)$ and $\frac{\sinh(x)}{2\sqrt{2}}-\frac{2}{3\sqrt{2}}e^{-x}$ are subtracted to eliminate $1$ in equation \eqref{eq:with1}. This allow us to study one system of equation with two different initial condition rather than two systems with the same initial condition. 
\end{remark}
\begin{proof}
Proceeding similarly as in \eqref{eq:expV}, we obtain that 
for $0\leq y_1\leq y_3$ we have  
\begin{align}\label{eq:v13}
{ v}(y_1,y_2,y_3)&={ v}(0,y_2+y_1,y_3-y_1)\frac{\sinh(\sqrt{2}y_2)}{\sinh(\sqrt{2}(y_1+y_2))},\notag\\
&\quad\quad+{ v}(y_1+y_2,0,y_3+y_2)\frac{\sinh(\sqrt{2}y_1)}{\sinh(\sqrt{2}(y_1+y_2))},
\end{align}
and for $0\leq y_3\leq y_1$, 
\begin{align}\label{eq:v31}
{ v}(y_1,y_2,y_3)&={ v}(y_1-y_3,y_2+y_3,0)\frac{\sinh(\sqrt{2}y_2)}{\sinh(\sqrt{2}(y_3+y_2))},\notag\\
&\quad\quad+{ v}(y_1+y_2,0,y_3+y_2)\frac{\sinh(\sqrt{2}y_3)}{\sinh(\sqrt{2}(y_3+y_2))}.
\end{align}
Let us first consider the case  $0\leq y_1\leq y_3$. Similarly to the proof of \eqref{prop:diag}, we differentiate \eqref{eq:v13} in $y_1$, and send $y_1$ to $0$, and obtain that 
\begin{equation*}
\begin{split}
\pa_1 v(0,y_2,y_3)&=\pa_2v(0,y_2,y_3)-\pa_3v(0,y_2,y_3)+v(0,y_2,y_3)\frac{-\sqrt{2}}{\tanh(\sqrt{2}y_2)}
\\&+v(y_2,0,y_2+y_3)\frac{\sqrt{2}}{\sinh(\sqrt{2}y_2)}
\end{split}
\end{equation*}
Additionally, the reflection conditions \eqref{eq:refl3} gives 
$$(2\pa_3-\pa_2) v(0,y_2,y_3)=\frac{2\sqrt{2}}{\sinh{(\sqrt{2}y_2)}} v(y_2,0,y_3+y_2)-\frac{2\sqrt{2}}{\tanh{(\sqrt{2}y_2)}} v(0,y_2,y_3)$$
Then we differentiate \eqref{eq:v13} in $y_2$ and send $y_2$ to $0$ to obtain
\begin{align*}
\pa_2 v(y_1,0,y_3)=&\pa_1 v(y_1,0,y_3)+\pa_3 v(y_1,0,y_3)+v(0,y_1,y_3-y_1)\frac{\sqrt{2}}{\sinh(\sqrt{2}y_1)}\\
&+v(y_1,0,y_3)\frac{-\sqrt{2}}{\tanh(\sqrt{2}y_1)}.
\end{align*}
The reflection conditions \eqref{eq:refl2} yields
$$(\pa_1+\pa_3) v(y_1,0,y_3)=\frac{2\sqrt{2}}{\tanh{(\sqrt{2}y_1)}} v(y_1,0,y_3)-\frac{2\sqrt{2}}{\sinh{(\sqrt{2}y_1)}} v(0,y_1,y_3-y_1).$$
Combining both equalities, and write them in $f(x,y)$ and $r_1(x,y)$, we get the desired system:
\begin{align*}
(\pa_x-2\pa_y)f(x,y)&=\frac{2}{\tanh{x}}f(x,y)-\frac{2}{\sinh{x}}r_1(x,y),\\
\pa_x r_1(x,y)&=-\frac{2}{\sinh{x}}f(x,y)+\frac{2}{\tanh{x}}r_1(x,y).
\end{align*}
Let us now consider the case  $0\leq y_3\leq y_1$. 
Following a similar procedure as before, we differentiate \eqref{eq:v31} in $y_2$, and send $y_2$ to $0$ to obtain  
\begin{align*}
\pa_2v(y_1,0,y_3)=&\pa_1v(y_1,0,y_3)+\pa_3v(y_1,0,y_3)+v(y_1-y_3,y_3,0)\frac{\sqrt{2}}{\sinh(\sqrt{2}y_3)}\\
&+v(y_1,0,y_3)\frac{-\sqrt{2}}{\tanh(\sqrt{2}y_3)}.
\end{align*}
Additionally, the reflection conditions \eqref{eq:refl2} gives 
$$(\pa_1+\pa_3) v(y_1,0,y_3)=\frac{2\sqrt{2}}{\tanh{(\sqrt{2}y_3)}} v(y_1,0,y_3)-\frac{2\sqrt{2}}{\sinh{(\sqrt{2}y_3)}} v(y_1-y_3,y_3,0).$$
Then we differentiate \eqref{eq:v31} in $y_3$ and send $y_3$ to $0$ and obtain
\begin{align*}
\pa_3v(y_1,y_2,0)=&\pa_2v(y_1,y_2,0)-\pa_1v(y_1,y_2,0)+v(y_1,y_2,0)\frac{-\sqrt{2}}{\tanh(\sqrt{2}y_2))}\\
&+v(y_1+y_2,0,y_2)\frac{\sqrt{2}}{\sinh(\sqrt{2}y_2)}.
\end{align*}
The reflection conditions \eqref{eq:refl1} gives
\begin{align}\label{eq:with1}
(2\pa_1-\pa_2)v(y_1,y_2,0)=1-\frac{2\sqrt{2}v(y_1,y_2,0)}{\tanh(\sqrt{2}y_2)}+\frac{2\sqrt{2}v(y_1+y_2,0,y_2)}{\sinh(\sqrt{2}y_2)}.
\end{align}
Combining both equalities, and write them in $h(x,y)$ and $r_2(x,y)$, we have the desired system:
\begin{align*}
(\pa_x-2\pa_y)h(x,y)&=\frac{2}{\tanh{x}}h(x,y)-\frac{2}{\sinh{x}}r_2(x,y),\\
\pa_x r_2(x,y)&=-\frac{2}{\sinh{x}}h(x,y)+\frac{2}{\tanh{x}}r_2(x,y).
\end{align*}
The compatibility conditions and initial conditions follows form the change of variable described at the beginning of this section and Proposition~\ref{prop:diag}.
\end{proof}

\subsection{Solving the Hyperbolic system}
Although first order and linear, the system \eqref{syst:hf} can not be directly solved via the method of characteristics since the characteristics for the two equations are not in the same direction. Additionally, we cannot employ methods described in \cite{tsarev2007factorization} and \cite{fusco1996method}.
\subsubsection{Heuristic to find an ansatz of the solution}
We first note that if $f$ is given then thanks to \eqref{syst:hr}, $r$ solves a linear ODE whose unique solutions that is bounded at infinity is  
\begin{align}\label{sol:r}
r_1(x,y)=2\sinh^2(x)\int_x^\infty\frac{f(r,y)}{\sinh^3(r)}dr.
\end{align}
$\{f(x,0)\}_{x\geq 0}$ being given, we can easily obtain $\{r_1(x,0)\}_{x\geq 0}$ by integration. This allows us to compute 
$\{\pa_y f(x,0)\}_{x\geq 0}$ by isolating it in \eqref{syst:hf}.

Since the system does not depend on $y$ we can differentiate in $y$. Thus, we can compute $\{\pa^2_{y} f(x,0)\}_{x\geq 0}$ with a similar procedure if we start with initial condition $\{\pa_y f(x,0)\}_{x\geq 0}$. Then, we can repeat the procedure to compute several derivatives $\{\pa^n_y f(x,0)\}_{x\geq 0}$. 

Additionally thanks to the form of solutions in \cite{iskenderovinverse}, we expect that the solutions $f$ and $r$ are functions of $x+\frac{y}{2}$ and $\frac{y}{2}$. 
Combining this with the computation of the derivatives $\{\pa^n_y f(x,0)\}_{x\geq 0}$ we conjecture that 
\begin{align*}
f(x,y)=&h_1\left(\frac{y}{2}\right)\arctan\left(e^{-x-\frac{y}{2}}\right)\cosh\left(x+\frac{y}{2}\right)\\
&+h_2\left(\frac{y}{2}\right)\arctanh\left(e^{-x-\frac{y}{2}}\right)\sinh\left(x+\frac{y}{2}\right)+h_3\left(\frac{y}{2}\right)
\end{align*}
with the condition 
$$h_1(0)=\frac{1}{\sqrt{2}}, \, h_2(0)=h_3(0)=0.$$

\subsubsection{Solution to the systems}
Given the ansatz for $f$, one can integrate \eqref{sol:r} to find that $r$ then \eqref{syst:hf} leads to \footnote{This computation could be extremely tedious by hand. We have checked the identity with Mathematica V11. The code for this verification and other tedious computations are provided in \cite{BEYMath}.}  
\begin{align}
&\notag 2 \arctanh(e^{-x})\sinh(x)\left(\coth\left(\frac{y}{2}\right)h_2\left(\frac{y}{2}\right)+2h_3\left(\frac{y}{2}\right)+h_1\left(\frac{y}{2}\right)\tanh\left(\frac{y}{2}\right)\right)\\
\notag&+\arctan\left(e^{-x-\frac{y}{2}}\right)\cosh\left(x+\frac{y}{2}\right)\left(2h_1\left(\frac{y}{2}\right)\tanh\left(\frac{y}{2}\right)-h'_1\left(\frac{y}{2}\right)\right)\\
\notag&+\arctanh\left(e^{-x-\frac{y}{2}}\right)\sinh\left(x+\frac{y}{2}\right)\left(2h_2\left(\frac{y}{2}\right)\coth\left(\frac{y}{2}\right)-h'_2\left(\frac{y}{2}\right)\right)\\
&-h_1\left(\frac{y}{2}\right)-h_2\left(\frac{y}{2}\right)-h_3'\left(\frac{y}{2}\right)=0.\label{eq:solansatz}
\end{align}
Setting the second and the third lines to $0$, we solve the ODE obtained for $h_1$ and $h_2$ with the initial condition to obtain that 
$$h_1(y)=\frac{1}{\sqrt{2}}\cosh^2(y)\mbox{ and }h_2(y)=C\sinh^2(y)\mbox{ for some constant }C.$$ 
Injecting this to the first line, the term in parentheses in the first line becomes 
$$\left(\frac{C}{2}+\frac{1}{2\sqrt{2}}\right)\sinh\left(y\right)+2h_3\left(\frac{y}{2}\right).$$
This allows us to identify 
$$h_3\left({y}\right)=-\frac{C\sqrt{2}+1}{4\sqrt{2}}\sinh\left(2y\right).$$
Thus, to satisfy \eqref{eq:solansatz} we need
$$\frac{1}{\sqrt{2}}\cosh^2(y)+C\sinh^2(y)-\frac{C\sqrt{2}+1}{2\sqrt{2}}\cosh\left(2y\right)=0$$
which is satisfied for $C=\frac{1}{\sqrt{2}}.$
Thus, we obtain $f$ as
\begin{align}\label{eq:f}
f(x,y)&=\frac{1}{\sqrt{2}}\left(\arctan(e^{-x-\frac{y}{2}})\cosh(x+\frac{y}{2})\cosh^2(\frac{y}{2})\right)\\
&+\frac{1}{\sqrt{2}}\left(\arctanh{(e^{-x-\frac{y}{2}})}\sinh(x+\frac{y}{2})\sinh^2(\frac{y}{2})-\frac{1}{2}\sinh(y)\right). \notag
\end{align}
Injecting this expression in \eqref{sol:r} we obtain
\begin{align}\label{eq:r1}
r_1(x,y)&=\frac{1}{\sqrt{2}}\left(\arctan(e^{-x-\frac{y}{2}})\cosh^2(x+\frac{y}{2})\cosh(\frac{y}{2})\right)\\
&+\frac{1}{\sqrt{2}}\left(\arctanh{(e^{-x-\frac{y}{2}})}\sinh^2(x+\frac{y}{2})\sinh(\frac{y}{2})-\frac{1}{2}\sinh(x+y)\right).\notag
\end{align}
Using the same method we can also solve the system \eqref{syst:hf}-\eqref{syst:hr} with initial condition 
$$\left(\frac{1}{2\sqrt{2}}\left(1+\frac{e^{-2x}}{3}\right), \frac{2}{3\sqrt{2}}e^{-x}\right),$$ then using the linearity of the system subtract this from $(f,r)$ to obtain 
\begin{align}\label{eq:h}
h(x,y)&=\frac{1}{\sqrt{2}}\left(\arctan(e^{-x-\frac{y}{2}})\cosh(x+\frac{y}{2})\cosh^2(\frac{y}{2})-\frac{1}{2}-\frac{e^{-2x}}{6}\right)\\
&-\frac{1}{\sqrt{2}}\arctanh{(e^{-x-\frac{y}{2}})}\sinh(x+\frac{y}{2})\sinh^2(\frac{y}{2}), \notag\\
\label{eq:r2}r_2(x,y)&=\frac{1}{\sqrt{2}}\left(\arctan(e^{-x-\frac{y}{2}})\cosh^2(x+\frac{y}{2})\cosh(\frac{y}{2})-2\frac{\cosh (x)}{3} \right)\\
&\notag-\frac{1}{\sqrt{2}}\left(\arctanh{(e^{-x-\frac{y}{2}})}\sinh^2(x+\frac{y}{2})\sinh(\frac{y}{2})-\frac{\sinh (x)}{6} \right).
\end{align}
The reader may find in \cite{BEYMath}, the Mathematica code to check that \eqref{eq:f}-\eqref{eq:r2} provides solutions to the system \eqref{syst:hf} and \eqref{syst:hr}.
Combining \eqref{eq:f}, \eqref{eq:r1}, \eqref{eq:h} and \eqref{eq:r2}, we now give the expression of $v$. 
\begin{prop}
The function $v$ defined at \eqref{eq:defv} is given by 
\begin{align*}
 v(y_1,y_2,y_3)=&-\frac{\sqrt2}{4}\sinh(\sqrt{2}y_3)\\
 &+\frac{\sqrt{2}}{2}\arctan\left(e^{-\frac{y_1+2y_2+y_3}{\sqrt{2}}}\right)\\
 &\cosh\left(\frac{-y_1+y_3}{\sqrt{2}}\right)\cosh\left(\frac{y_1+2y_2+y_3}{\sqrt{2}}\right)\cosh\left(\frac{y_1+y_3}{\sqrt{2}}\right)\notag\\
   &+\frac{\sqrt{2}}{2}\arctanh\left(e^{-\frac{y_1+2y_2+y_3}{\sqrt{2}}}\right)\\
   &\sinh\left(\frac{-y_1+y_3}{\sqrt{2}}\right)\sinh\left(\frac{y_1+2y_2+y_3}{\sqrt{2}}\right)\sinh\left(\frac{y_1+y_3}{\sqrt{2}}\right)\notag
\end{align*}
\end{prop}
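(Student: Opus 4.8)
The plan is to read the claimed formula directly off the interpolation identities \eqref{eq:v13}--\eqref{eq:v31} and the explicit boundary data \eqref{eq:f}, \eqref{eq:r1}, \eqref{eq:h}, \eqref{eq:r2}, after writing $\R_+^3=\{0\le y_1\le y_3\}\cup\{0\le y_3\le y_1\}$ (the two pieces overlapping on $\{y_1=y_3\}$, already handled in Proposition~\ref{prop:diag}). On $\{0\le y_1\le y_3\}$ the right-hand side of \eqref{eq:v13} uses $v$ only on the faces $\{y_1=0\}$ and $\{y_2=0\}$, where by \eqref{eq:deffrh1}--\eqref{eq:deffrh2} we have $v(0,y_1+y_2,y_3-y_1)=f(\sqrt2(y_1+y_2),\sqrt2(y_3-y_1))$ and $v(y_1+y_2,0,y_3+y_2)=r_1(\sqrt2(y_1+y_2),\sqrt2(y_3-y_1))$ — on the $\{y_2=0\}$ face the third coordinate exceeds the first, so it is $r_1$ and not $r_2$ that appears. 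Introducing $B=\tfrac{y_1+2y_2+y_3}{\sqrt2}$, $D=\tfrac{y_3-y_1}{\sqrt2}$, $S=\tfrac{y_1+y_3}{\sqrt2}$ one has $\sqrt2 y_2=B-S$, $\sqrt2 y_1=S-D$, $\sqrt2(y_1+y_2)=B-D$, $\sqrt2 y_3=S+D$, and the arguments of both $f$ and $r_1$ become $x+y/2=B$, $y/2=D$. Substituting \eqref{eq:f}--\eqref{eq:r1} into \eqref{eq:v13} and grouping the terms carrying $\arctan(e^{-B})$, those carrying $\arctanh(e^{-B})$, and the rest (which cannot be matched against one another, since $\arctan(e^{-B})$ and $\arctanh(e^{-B})$ are not elementary hyperbolic functions of $B$), the asserted equality reduces to the three scalar relations
\begin{align*}
\sinh(B-S)\cosh D+\sinh(S-D)\cosh B&=\sinh(B-D)\cosh S,\\
\sinh(B-S)\sinh D+\sinh(S-D)\sinh B&=\sinh(B-D)\sinh S,\\
\sinh(B-S)\sinh 2D+\sinh(S-D)\sinh(B+D)&=\sinh(B-D)\sinh(S+D),
\end{align*}
each of which follows in a line from the hyperbolic addition and product-to-sum formulas, the last one producing the single term $-\tfrac{\sqrt2}{4}\sinh(\sqrt2 y_3)=-\tfrac{\sqrt2}{4}\sinh(S+D)$ in the answer.

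On $\{0\le y_3\le y_1\}$ one repeats this with \eqref{eq:v31} in place of \eqref{eq:v13}, now using \eqref{eq:deffrh3}--\eqref{eq:deffrh4}: $v(y_1-y_3,y_2+y_3,0)=h(\sqrt2(y_2+y_3),\sqrt2(y_1-y_3))+\tfrac{1}{2\sqrt2}(1+\tfrac13 e^{-2\sqrt2(y_2+y_3)})$ and $v(y_1+y_2,0,y_3+y_2)=r_2(\sqrt2(y_3+y_2),\sqrt2(y_1-y_3))+\tfrac{2}{3\sqrt2}e^{-\sqrt2(y_3+y_2)}$, where now the first coordinate of the $\{y_2=0\}$-face point exceeds the third, so $r_2$ is the correct parametrization. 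With $B,S$ as above and $D$ replaced by $-D=\tfrac{y_1-y_3}{\sqrt2}$, the $\arctan$- and $\arctanh$-parts collapse by exactly the same three identities, while the correction exponentials inserted in \eqref{eq:deffrh3}--\eqref{eq:deffrh4} combine with the $\tfrac12\sinh$-tails of $h$ and $r_2$ and with the weights $\sinh(\sqrt2 y_2)/\sinh(\sqrt2(y_2+y_3))$, $\sinh(\sqrt2 y_3)/\sinh(\sqrt2(y_2+y_3))$ to again reproduce $-\tfrac{\sqrt2}{4}\sinh(\sqrt2 y_3)$ — which is exactly why those terms were subtracted off, cf.\ the remark after Proposition~\ref{prop:boundary}. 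The two half-domains thus give the same closed form; it is plainly $C^2$ on $(0,\infty)^3$ (the only candidate singularity, from $\arctanh(e^{-B})$ as $B\downarrow0$, sits at the origin and is annihilated by the accompanying $\sinh$ factor), so the two expressions agree on $\{y_1=y_3\}$ — where one may moreover cross-check them against \eqref{eq:valueV} — and glue into the stated global formula.

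The step I expect to be the genuine obstacle is the computation on the second half-domain: the product of the two $\sinh$-ratio weights with four exponential corrections and the $\sinh$-tails of $h$ and $r_2$ is not transparent by hand, and the cancellation down to a single hyperbolic sine is most safely carried out with computer algebra, as in \cite{BEYMath}. A conceptually cleaner but non-constructive alternative would be to take the displayed expression as an ansatz, verify by direct differentiation that it solves \eqref{eq:pdev} on $(0,\infty)^3$ with the reflection conditions \eqref{eq:refl1}--\eqref{eq:refl3} and is bounded, and then invoke uniqueness for that oblique-derivative problem; since Sections~\ref{s.oblique}--\ref{s.character} are meant to be constructive, I would present the substitution argument.
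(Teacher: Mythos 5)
Your proposal is correct and follows the same route as the paper: split $\R_+^3$ along $\{y_1=y_3\}$, plug the boundary values $f,r_1$ (resp.\ $h,r_2$) into \eqref{eq:v13} (resp.\ \eqref{eq:v31}), simplify, and observe the two closed forms agree. The genuine improvement over the paper's presentation is that by introducing $B=\tfrac{y_1+2y_2+y_3}{\sqrt2}$, $D=\tfrac{y_3-y_1}{\sqrt2}$, $S=\tfrac{y_1+y_3}{\sqrt2}$ you isolate exactly three product-to-sum identities (with $D\mapsto -D$ on the second half-domain) that make the collapse hand-checkable, rather than deferring the algebra to the Mathematica notebook as the paper does. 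One small correction of emphasis: on the second half-domain the contribution from $v(y_1-y_3,y_2+y_3,0)$ has a vanishing non-transcendental tail (the $-\tfrac12-\tfrac{e^{-2x}}6$ in $h$ is precisely what the correction term restores), so the entire $-\tfrac{\sqrt2}{4}\sinh(\sqrt2 y_3)$ comes from collapsing the $r_2$-tail against its single weight $\sinh(\sqrt2 y_3)/\sinh(\sqrt2(y_2+y_3))$ — your third scalar identity is not actually invoked there.
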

\begin{remark}
For reader's convenience we provide in \cite{BEYMath} the Mathematica code to check that this expression provides a solution to the equations \eqref{eq:pdev} and \eqref{eq:refl1}-\eqref{eq:refl3}.
\end{remark}
\begin{proof}
The proof is a direct consequence of identities \eqref{eq:v13}-\eqref{eq:v31} and \eqref{eq:f}-\eqref{eq:r2}.We inject $f(x,y)$ and $r_1(x,y)$ to obtain $v$ for $0\leq y_1\leq y_3$, i.e. 
{\footnotesize
\begin{align*}
 &v(y_1,y_2,y_3)\\
 &=f(\sqrt{2}(y_1+y_2)),\sqrt{2}(y_3-y_1)\frac{\sinh(\sqrt{2}y_2)}{\sinh(\sqrt{2}(y_1+y_2))}
 \\&+r_1(\sqrt{2}(y_1+y_2),\sqrt{2}(y_3-y_1))\frac{\sinh(\sqrt{2}y_1)}{\sinh(\sqrt{2}(y_1+y_2))}\\
 &=\frac{\sqrt{2}}{2}\arctan(e^{-\frac{y_1+2y_2+y_3}{\sqrt{2}}})\cosh(\frac{-y_1+y_3}{\sqrt{2}})\cosh^2(\frac{y_1+2y_2+y_3}{\sqrt{2}})\\&\times \csch(\sqrt{2}(y_1+y_2))\sinh(\sqrt{2}y_1)\\
     &+\frac{\sqrt{2}}{2}\arctan(e^{-\frac{y_1+2y_2+y_3}{\sqrt{2}}})\cosh^2(\frac{-y_1+y_3}{\sqrt{2}})\cosh(\frac{y_1+2y_2+y_3}{\sqrt{2}})\\& \times \csch(\sqrt{2}(y_1+y_2))\sinh(\sqrt{2}y_2)\\
    &-\frac{\sqrt{2}}{4}\csch (\sqrt{2}(y_1+y_2))\sinh(\sqrt{2}y_2)\sinh(\sqrt{2}(-y_1+y_3))\\
    &-\frac{\sqrt{2}}{4}\csch (\sqrt{2}(y_1+y_2))\sinh(\sqrt{2}y_1)\sinh(\sqrt{2}(y_2+y_3))\\
    &+\frac{\sqrt{2}}{2}\arctanh(e^{-\frac{y_1+2y_2+y_3}{\sqrt{2}}})\csch(\sqrt{2}(y_1+y_2))\sinh(\sqrt{2}y_2)\\& \times\sinh^2(\frac{-y_1+y_3}{\sqrt{2}})\sinh(\frac{y_1+2y_2+y_3}{\sqrt{2}})\\
    &+\frac{\sqrt{2}}{2}\arctanh(e^{-\frac{y_1+2y_2+y_3}{\sqrt{2}}})\csch(\sqrt{2}(y_1+y_2))\sinh(\sqrt{2}y_1)\\& \times \sinh(\frac{-y_1+y_3}{\sqrt{2}})\sinh^2(\frac{y_1+2y_2+y_3}{\sqrt{2}}).
\end{align*}}
And injecting $h(x,y)$ and $r_2(x,y)$ we obtain $v$ for $0\leq y_3\leq y_1$
{\footnotesize
\begin{align*}
&v(y_1,y_2,y_3)\\
&=\left(h(\sqrt{2}(y_2+y_3),\sqrt{2}(y_1-y_3))+\frac{1}{2\sqrt{2}}(1+\frac{e^{2\sqrt{2}(y_2+y_3)}}{3})\right)\frac{\sinh(\sqrt{2}y_2)}{\sinh(\sqrt{2}(y_3+y_2))}\\
&+\left(r_2(\sqrt{2}(y_2+y_3),\sqrt{2}(y_1-y_3)+\frac{2}{3\sqrt{2}}e^{-\sqrt{2}(y_2+y_3)})\right)\frac{\sinh(\sqrt{2}y_3)}{\sinh(\sqrt{2}(y_3+y_2))}\\
&= \frac{\sqrt{2}}{2}\arctan(e^{-\frac{y_1+2y_2+y_3}{\sqrt{2}}})\cosh^2(\frac{y_1-y_3}{\sqrt{2}})\cosh(\frac{y_1+2y_2+y_3}{\sqrt{2}})\csch(\sqrt{2}(y_3+y_2))\sinh(\sqrt{2}y_2)\\
     &+\frac{\sqrt{2}}{2}\arctan(e^{-\frac{y_1+2y_2+y_3}{\sqrt{2}}})\cosh(\frac{y_1-y_3}{\sqrt{2}})\cosh^2(\frac{y_1+2y_2+y_3}{\sqrt{2}})\csch(\sqrt{2}(y_3+y_2))\sinh(\sqrt{2}y_3)\\
    &-\frac{\sqrt{2}}{4} \sinh(\sqrt{2}y_3)\\
    &-\frac{\sqrt{2}}{2}\arctanh(e^{-\frac{y_1+2y_2+y_3}{\sqrt{2}}})\csch(\sqrt{2}(y_3+y_2))\sinh(\sqrt{2}y_2)\sinh^2(\frac{y_1-y_3}{\sqrt{2}})\sinh(\frac{y_1+2y_2+y_3}{\sqrt{2}})\\
    &-\frac{\sqrt{2}}{2}\arctanh(e^{-\frac{y_1+2y_2+y_3}{\sqrt{2}}})\csch(\sqrt{2}(y_3+y_2))\sinh(\sqrt{2}y_3)\sinh(\frac{y_1-y_3}{\sqrt{2}})\sinh^2(\frac{y_1+2y_2+y_3}{\sqrt{2}})
\end{align*}}
Note that these expressions can be simplified and combined into one expression on the whole space $0\leq y_1,y_2,y_3$
\begin{align*}
&v(y_1,y_2,y_3)
\\&= \frac{\sqrt{2}}{2}\arctan\left(e^{-\frac{y_1+2y_2+y_3}{\sqrt{2}}}\right)\cosh\left(\frac{-y_1+y_3}{\sqrt{2}}\right)\cosh\left(\frac{y_1+2y_2+y_3}{\sqrt{2}}\right)\cosh\left(\frac{y_1+y_3}{\sqrt{2}}\right)\notag\\
    &-\frac{\sqrt2}{4}\sinh(\sqrt{2}y_3)\notag\\
    &+\frac{\sqrt{2}}{2}\arctanh\left(e^{-\frac{y_1+2y_2+y_3}{\sqrt{2}}}\right)\sinh\left(\frac{-y_1+y_3}{\sqrt{2}}\right)\sinh\left(\frac{y_1+2y_2+y_3}{\sqrt{2}}\right)\sinh\left(\frac{y_1+y_3}{\sqrt{2}}\right).\notag
\end{align*}
\end{proof}

We will close this section by giving a minimum principle for the supersolutions of the system \eqref{syst:hf}-\eqref{syst:hr}, which we will need in the next section when proving our main result. 

\begin{prop}\label{prop:comp}
Let $F,R:[0,\infty)^2\mapsto \R$ be functions that are continuous on their domain and continuously differentiable in the interior of their domain. Assume that for all $x,y\geq 0$, 
 $$F(x,0)\geq 0,\, F(0,y)\geq 0,\,{\liminf_{r^2+s^2\to\infty}F(r,s)\geq 0},\mbox{ and }\lim_{r\to\infty}R(r,y)=0.$$
Assume also that $F,R$ are supersolution of \eqref{syst:hf}-\eqref{syst:hr} in the sense
\begin{align}\label{eq:Fsuper}
(\pa_x-2\pa_y)F(x,y)&\leq \frac{2}{\tanh{x}}F(x,y)-\frac{2}{\sinh{x}}R(x,y)\\
\pa_x R(x,y)&\leq-\frac{2}{\sinh{x}}F(x,y)+\frac{2}{\tanh{x}}R(x,y)\label{eq:Rsuper}
\end{align}
Then $F(x,y)\geq 0$ and $R(x,y)\geq 0$ for all $x,y\geq 0$.
\end{prop}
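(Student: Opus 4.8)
The plan is to remove the singular zeroth‑order coefficient by a substitution, reduce the system to a coefficient‑free form, and then close the argument with an iteration whose convergence comes from factorial decay.

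First I would set $\tilde F:=F/\sinh^2 x$ and $\tilde R:=R/\sinh^2 x$ on $(0,\infty)^2$. Since $\sinh^2 x$ is, for both equations, the integrating factor of the operators $\pa_x-2\pa_y-\tfrac{2}{\tanh x}$ and $\pa_x-\tfrac{2}{\tanh x}$, a direct computation turns \eqref{eq:Fsuper}--\eqref{eq:Rsuper} into $(\pa_x-2\pa_y)\tilde F\le -\tfrac{2}{\sinh x}\tilde R$ and $\pa_x\tilde R\le -\tfrac{2}{\sinh x}\tilde F$ on $(0,\infty)^2$, with $\tilde F(x,0)\ge 0$ for $x>0$ and $\tilde R(x,y)\to 0$ as $x\to\infty$. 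It suffices to prove $\tilde F\ge 0$ and $\tilde R\ge 0$ on $(0,\infty)^2$, since then $F=\sinh^2 x\,\tilde F\ge 0$ and $R=\sinh^2 x\,\tilde R\ge 0$ there, and both inequalities extend to $[0,\infty)^2$ by continuity.

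Next I would record two integral inequalities. Integrating the $\tilde R$‑inequality in $x$ over $[x,\infty)$ and using $\tilde R(\cdot,y)\to 0$ at infinity gives $\tilde R(x,y)\ge \int_x^\infty \tfrac{2\tilde F(s,y)}{\sinh s}\,ds$ for $(x,y)\in(0,\infty)^2$, the improper integral being convergent, which I would deduce from the one‑sided bound $\tilde F\ge -\delta$ below. Integrating the $\tilde F$‑inequality along the characteristic $s\mapsto(x+s,y-2s)$, $s\in[0,y/2]$, from $(x,y)$ down to the boundary point $(x+y/2,0)$ where $\tilde F\ge 0$, gives $\tilde F(x,y)\ge \int_0^{y/2}\tfrac{2\tilde R(x+s,y-2s)}{\sinh(x+s)}\,ds$. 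Both right‑hand sides involve $\tilde F,\tilde R$ only at points whose first coordinate is $\ge x$, so each half‑strip $\Omega_a:=[a,\infty)\times[0,\infty)$, $a>0$, is invariant for this coupling, and I would fix such an $a$ and work inside $\Omega_a$.

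On $\Omega_a$ put $\delta:=-\inf_{\Omega_a}\tilde F$, which is finite because $F$ is continuous and, by $\liminf_{r^2+s^2\to\infty}F\ge 0$, bounded below on $\Omega_a$, while $\sinh^2 x\ge \sinh^2 a>0$ there; if $\delta\le 0$ we are already done on $\Omega_a$, so assume $\delta>0$. Writing $q(x):=\int_x^\infty\tfrac{ds}{\sinh s}=\ln\coth(x/2)$, so that $\int_x^\infty\tfrac{q(s)^n}{\sinh s}\,ds=\tfrac{q(x)^{n+1}}{n+1}$, I would feed $\tilde F\ge -\delta$ into the first integral inequality, then the resulting bound for $\tilde R$ into the second, and iterate. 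A straightforward induction yields, for every $k\ge 0$ and all $(x,y)\in\Omega_a$, the estimate $\tilde F(x,y)\ge -\tfrac{4^k\delta}{(2k)!}\,q(x)^{2k}$ (with companion bound $\tilde R(x,y)\ge -\tfrac{2\cdot 4^k\delta}{(2k+1)!}\,q(x)^{2k+1}$), the recursion for the constants being $C_{k+1}=\tfrac{4C_k}{(2k+1)(2k+2)}$ with $C_0=\delta$. Since $q(x)<\infty$ for every $x>0$, $\tfrac{4^k\delta}{(2k)!}q(x)^{2k}=\delta\,\tfrac{(2q(x))^{2k}}{(2k)!}\to 0$ as $k\to\infty$, whence $\tilde F\ge 0$ on $\Omega_a$, and, letting $a\downarrow 0$, on all of $(0,\infty)^2$; then $\pa_x\tilde R\le -\tfrac{2}{\sinh x}\tilde F\le 0$ forces $\tilde R(\cdot,y)$ nonincreasing with limit $0$, so $\tilde R\ge 0$ too, and multiplying back by $\sinh^2 x$ and passing to the closure finishes the proof. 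The step I expect to be the real obstacle -- where a naive argument breaks -- is calibrating this iteration: a single comparison only improves the lower bound where $q$ is small, i.e. $x$ bounded away from $0$, and $q(x)\to\infty$ as $x\to 0^+$, so no region is ``small'' enough to make the comparison operator a contraction; what rescues the argument is that the gain at stage $k$ is a factor $4/((2k+1)(2k+2))$, so the constants decay like $4^k/(2k)!$ and beat $q(x)^{2k}$ for every fixed $x>0$. (Incidentally, this argument never uses the hypothesis $F(0,y)\ge 0$, which is in fact implied by the remaining assumptions.)
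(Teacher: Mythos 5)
Your proof is correct, and it takes a genuinely different route from the paper's. The paper argues by contradiction with a maximum-principle mechanism: if $F$ has a negative interior minimum at $(x_0,y_0)$, the vanishing gradient there turns \eqref{eq:Fsuper} into $\cosh(x_0)F(x_0,y_0)\ge R(x_0,y_0)$, while integrating \eqref{eq:Rsuper} in $x$ out to $+\infty$ against the integrating factor $\sinh^{-2}x$ gives $R(x_0,y_0)\ge F(x_0,y_0)$ via the structural fact that $2\sinh^2 x\int_x^\infty \sinh^{-3}(r)\,dr\in[0,1]$; together these force $\cosh(x_0)\le 1$, contradicting $x_0>0$. You instead make the substitution $\tilde F=F/\sinh^2 x$, $\tilde R=R/\sinh^2 x$ to remove the zeroth-order coefficients, integrate \eqref{eq:Rsuper} in $x$ toward $+\infty$ and \eqref{eq:Fsuper} along its characteristic $s\mapsto(x+s,y-2s)$ down to the $y=0$ boundary, and close a Volterra-type iteration: the recursion $C_{k+1}=4C_k/((2k+1)(2k+2))$ makes the bound $\tilde F\ge -C_k\,q(x)^{2k}$ vanish as $k\to\infty$ for each fixed $x>0$ because $\sum (2q(x))^{2k}/(2k)!$ converges. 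Both approaches rest on the same two inputs (integrating \eqref{eq:Rsuper} to infinity using $R\to 0$, and reading \eqref{eq:Fsuper} against the boundary/gradient data), but yours is constructive rather than by contradiction, avoids appealing to the boundary condition on $\{x=0\}$ (you correctly note it is superfluous), and produces explicit iterated bounds, at the cost of being longer; the paper's version is shorter and exploits the algebraic identity $2\sinh^2 x\int_x^\infty\sinh^{-3}\le 1$ to get the contradiction in one step. One small point worth making explicit in your write-up: when integrating $\partial_x\tilde R\le -\tfrac{2}{\sinh x}\tilde F$ over $[x,L]$ you should keep $L$ finite and pass to the limit at the end (using $\tilde R(L,y)\to 0$ and the one-sided bound $\tilde F\ge -C_kq^{2k}$), rather than asserting convergence of the improper integral outright, since a priori $\tilde F$ has no upper bound; your iteration only needs the one-sided estimate, so this is a cosmetic fix.
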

\begin{proof}To obtain a contradiction we first assume that $F$ is negative at some point on its domain. Therefore, by the values of this function on the boundary of the domain, its minimum on $[0,\infty)^2$ is achieved and there exists $(x_0,y_0)\in(0,\infty)^2$
and $\d>0$ such that 
$$\inf_{x,y\in [0,\infty)}F(x,y)=F(x_0,y_0)=-\d<0.$$
Thanks to \eqref{eq:Rsuper} we can write 
$$\pa_x R(x,y)=-\frac{2}{\sinh{x}}F(x,y)+\frac{2}{\tanh{x}}R(x,y)-P(x,y)$$
for some $P\geq 0$ and continuous. 
We solve this ODE to obtain similarly to \eqref{sol:r} that 
\begin{align}
R(x,y)&=\sinh^2(x)\int_x^\infty \frac{2F(r,y)}{\sinh^3(r)} +\frac{P(r,y)}{\sinh^2(r)}dr\geq 2\sinh^2(x)\int_x^\infty \frac{F(r,y)}{\sinh^3(r)} dr\label{ineq:RF}\\
&\geq 2\inf_{r\in[x,\infty]}F(r,y)\sinh^2(x)\int_x^\infty \frac{1}{\sinh^3(r)} dr.\notag
\end{align}
We have the identity 
$$2\sinh^2(x)\int_x^\infty \frac{1}{\sinh^3(r)} dr=\cosh(x)-2\arctanh(e^{-x})\sinh^2(x)\in[0,1]\mbox{ for all }x>0.$$
Thus, 
\begin{align}\label{eq:inekR}
R(x_0,y_0)\geq 2\inf_{r\in[x_0,\infty]}F(r,y_0)\sinh^2(x_0)\int_{x_0}^\infty \frac{1}{\sinh^3(r)} dr\geq \inf_{r\in[x_0,\infty]}F(r,y_0)=-\d
\end{align}
where the last inequality is due to the fact that 
$$\inf_{r\in[x_0,\infty]}F(r,y_0)=-\d<0.$$
The minimality of $F$ at $(x_0,y_0)\in (0,\infty)^2$ and the differentiability of $F$ (which implies that $\partial_x F (x_0,y_0)=\partial_y F(x_0,y_0)=0$) combined with \eqref{eq:Fsuper} allows us to claim that 
$${\cosh(x_0)}F(x_0,y_0)\geq R(x_0,y_0).$$
Then, the inequality \eqref{eq:inekR} yields
$$-\d{\cosh(x_0)}={\cosh(x_0)}F(x_0,y_0)\geq R(x_0,y_0)\geq -\d$$
which is in contradiction with $x_0>0$. 
Thus, 
$F\geq 0$. 
Combining this inequality with \eqref{ineq:RF}, we obtain that $R\geq 0$. 
\end{proof}
\comm{\begin{align*}
{\tilde v}(y_1,y_2,y_3)=f(\sqrt{2}(y_1+y_2)),\sqrt{2}(y_3-y_1)\frac{\sinh(\sqrt{2}y_2)}{\sinh(\sqrt{2}(y_1+y_2))}+r_1(\sqrt{2}(y_1+y_2),\sqrt{2}(y_3-y_2))\frac{\sinh(\sqrt{2}y_1)}{\sinh(\sqrt{2}(y_1+y_2))}
\end{align*}
which is:
\begin{align*}
    \tilde v(y_1,y_2,y_3)=& 2\sqrt{2}\arctan(e^{-\frac{y_1+2y_2+y_3}{\sqrt{2}}})\cosh(\frac{-y_1+y_3}{\sqrt{2}})\cosh^2(\frac{y_1+2y_2+y_3}{\sqrt{2}})\csch(\sqrt{2}(y_1+y_2))\sinh(\sqrt{2}y_1)\\
     &+2\sqrt{2}\arctan(e^{-\frac{y_1+2y_2+y_3}{\sqrt{2}}})\cosh^2(\frac{-y_1+y_3}{\sqrt{2}})\cosh(\frac{y_1+2y_2+y_3}{\sqrt{2}})\csch(\sqrt{2}(y_1+y_2))\sinh(\sqrt{2}y_2)\\
    &-\sqrt{2}\csch (\sqrt{2}(y_1+y_2))\sinh(\sqrt{2}y_2)\sinh(\sqrt{2}(-y_1+y_3))\\
    &-\sqrt{2}\csch (\sqrt{2}(y_1+y_2))\sinh(\sqrt{2}y_1)\sinh(\sqrt{2}(y_2+y_3))\\
    &+2\sqrt{2}\arctanh(e^{-\frac{y_1+2y_2+y_3}{\sqrt{2}}})\csch(\sqrt{2}(y_1+y_2))\sinh(\sqrt{2}y_2)\sinh^2(\frac{-y_1+y_3}{\sqrt{2}})\sinh(\frac{y_1+2y_2+y_3}{\sqrt{2}})\\
    &+2\sqrt{2}\arctanh(e^{-\frac{y_1+2y_2+y_3}{\sqrt{2}}})\csch(\sqrt{2}(y_1+y_2))\sinh(\sqrt{2}y_1)\sinh(\frac{-y_1+y_3}{\sqrt{2}})\sinh^2(\frac{y_1+2y_2+y_3}{\sqrt{2}})
\end{align*}
2. In case of setting $y_3\leq y_1$:\\
Let
\begin{align*}
 H(x,y)=h(x,y)+\sqrt{2}(1+\frac{e^{-2x}}{3})\\
 R(x,y)=r_2(x,y)+4\sqrt{2}\frac{e^{-x}}{3}\
\end{align*}
\begin{align*}
\tilde v(y_1,y_2,y_3)=& 2\sqrt{2}\arctan(e^{-\frac{y_1+2y_2+y_3}{\sqrt{2}}})\cosh^2(\frac{y_1-y_3}{\sqrt{2}})\cosh(\frac{y_1+2y_2+y_3}{\sqrt{2}})\csch(\sqrt{2}(y_3+y_2))\sinh(\sqrt{2}y_2)\\
     &+2\sqrt{2}\arctan(e^{-\frac{y_1+2y_2+y_3}{\sqrt{2}}})\cosh(\frac{y_1-y_3}{\sqrt{2}})\cosh^2(\frac{y_1+2y_2+y_3}{\sqrt{2}})\csch(\sqrt{2}(y_3+y_2))\sinh(\sqrt{2}y_3)\\
    &-\sqrt{2} \sinh(\sqrt{2}y_3)\\
    &-2\sqrt{2}\arctanh(e^{-\frac{y_1+2y_2+y_3}{\sqrt{2}}})\csch(\sqrt{2}(y_3+y_2))\sinh(\sqrt{2}y_2)\sinh^2(\frac{y_1-y_3}{\sqrt{2}})\sinh(\frac{y_1+2y_2+y_3}{\sqrt{2}})\\
    &-2\sqrt{2}\arctanh(e^{-\frac{y_1+2y_2+y_3}{\sqrt{2}}})\csch(\sqrt{2}(y_3+y_2))\sinh(\sqrt{2}y_3)\sinh(\frac{y_1-y_3}{\sqrt{2}})\sinh^2(\frac{y_1+2y_2+y_3}{\sqrt{2}})
\end{align*}
After simplification, we get:\\
1. In case of setting $y_1\leq y_3$:
\begin{align*}
    \tilde v(y_1,y_2,y_3)=& 2\sqrt{2}\arctan(e^{-\frac{y_1+2y_2+y_3}{\sqrt{2}}})\cosh(\frac{-y_1+y_3}{\sqrt{2}})\cosh(\frac{y_1+2y_2+y_3}{\sqrt{2}})\cosh(\frac{y_1+y_3}{\sqrt{2}})\\
    &-\sqrt{2}\sinh(\sqrt{2}y_3)\\
    &+\sqrt{2}\arctanh(e^{-\frac{y_1+2y_2+y_3}{\sqrt{2}}})\sinh(\frac{-y_1+y_3}{\sqrt{2}})\sinh(\frac{y_1+2y_2+y_3}{\sqrt{2}})e^{-\frac{y_1+y_3}{\sqrt{2}}}(e^{\sqrt{2}(y_1+y_3)}-1)\\
    \end{align*}
2. In case of setting $y_3\leq y_1$:
\begin{align*}
\tilde v(y_1,y_2,y_3)=& 2\sqrt{2}\arctan(e^{-\frac{y_1+2y_2+y_3}{\sqrt{2}}})\cosh(\frac{y_1-y_3}{\sqrt{2}})\cosh(\frac{y_1+2y_2+y_3}{\sqrt{2}})\cosh(\frac{y_1+y_3}{\sqrt{2}})\\
    &-\sqrt{2} \sinh(\sqrt{2}y_3)\\
    &-\sqrt{2}\arctanh(e^{-\frac{y_1+2y_2+y_3}{\sqrt{2}}})\sinh(\frac{y_1-y_3}{\sqrt{2}})\sinh(\frac{y_1+2y_2+y_3}{\sqrt{2}})e^{-\frac{y_1+y_3}{\sqrt{2}}}(e^{\sqrt{2}(y_1+y_3)}-1)
\end{align*}
and we can tell that the two equations we got are actually the same,i.e there exist a function that solves both PDEs at the region that they are corresponding to.}
\section{Regularity of $u$ and proof of the main theorems}\label{s.proof}
In this section we use the expression of $v$ to define the candidate solution to the PDE \eqref{eq:pdegeo}. Let $\cW_4:=\{x\in \R^d:x_1<x_2<x_3<x_4\}$ and define 
\begin{equation}
\begin{split}\label{def:U}
U:x\in\cW_4\mapsto& x_4+v(x_2-x_1,x_3-x_2,x_4-x_3)=x_4-\frac{\sqrt{2}}{4} \sinh(\sqrt{2}(x_4-x_3))\\
&+\frac{\sqrt{2}}{2} \arctan\left(e^{\frac{x_1+x_2-x_3-x_4}{\sqrt{2}}}\right)\cosh\left(\frac{x_1-x_2+x_3-x_4}{\sqrt{2}}\right)\\
&\quad\quad\quad\cosh\left(\frac{-x_1+x_2+x_3-x_4}{\sqrt{2}}\right)\cosh\left(\frac{-x_1-x_2+x_3+x_4}{\sqrt{2}}\right)\\
&+\frac{\sqrt{2}}{2}\arctanh\left(e^{\frac{x_1+x_2-x_3-x_4}{\sqrt{2}}}\right)\sinh\left(\frac{x_1-x_2+x_3-x_4}{\sqrt{2}}\right)\\
&\quad\quad\quad\sinh\left(\frac{-x_1+x_2+x_3-x_4}{\sqrt{2}}\right)\sinh\left(\frac{-x_1-x_2+x_3+x_4}{\sqrt{2}}\right)
\end{split}
\end{equation}
so that 
\begin{align}\label{eq:defuU}
u(x)=U(x^{(1)},x^{(2)},x^{(3)},x^{(4)})\mbox{ for }x\in \R^4.
\end{align}
We give the following proposition for the regularity of $u$ and $U$. 

\begin{prop}\label{prop:regularity}
$U$ has a $C^2$ extension to $\bar \cW_4$ and the extension satisfies for all $x\in \bar \cW_4$, 
\begin{align}
\pa_1U(x_1,x_1,x_3,x_4)=\pa_2U(x_1,x_1,x_3,x_4),\label{eq:verifbound1}\\
\pa_2U(x_1,x_2,x_2,x_4)=\pa_3U(x_1,x_2,x_2,x_4),\label{eq:verifbound2}\\
\pa_3U(x_1,x_2,x_3,x_3)=\pa_4U(x_1,x_2,x_3,x_3).\label{eq:verifbound3}
\end{align}
Additionally, $u$ defined by \eqref{eq:u} is $C^2$ on $\R^4$ and $U$ satisfies 
\begin{align}\label{eq:PDEUU}
0&=U(x)-\Phi(x)-\frac{1}{2}\begin{pmatrix}
0\\
1\\
0\\
1
\end{pmatrix}^T \pa^2 U(x)\begin{pmatrix}
0\\
1\\
0\\
1
\end{pmatrix}\mbox{ for all }x\in \cW_4.
\end{align}
 \end{prop}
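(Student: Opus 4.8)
The plan is to reduce every assertion of the proposition to the explicit, ``one difference at a time'' structure of the formula \eqref{def:U}, via the relation $U(x)=x_4+v(x_2-x_1,x_3-x_2,x_4-x_3)$ together with the closed form of $v$ computed in Section~\ref{s.character} (Sections~\ref{s.oblique}--\ref{s.character} only motivated this formula; here it is checked directly). The only ingredient in \eqref{def:U} that is not an entire function of $x$ is $\arctanh$, whose argument $e^{(x_1+x_2-x_3-x_4)/\sqrt{2}}$ lies in $(0,1]$ on $\bar\cW_4$ and equals $1$ exactly on the diagonal $\{x_1=x_2=x_3=x_4\}$; on $\bar\cW_4$ minus that line $U$ is real-analytic. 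Hence the entire regularity question concentrates at the diagonal.

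The crux is to show $U$ extends $C^2$ across the diagonal. Writing $t=(y_1+2y_2+y_3)/\sqrt{2}$ and using $\arctanh(e^{-t})=\tfrac12\ln\coth(t/2)=-\tfrac12\ln t+E(t)$, where $E(t)=\tfrac12\ln\cosh(t/2)-\tfrac12\ln\frac{\sinh(t/2)}{t/2}+\tfrac12\ln2$ is an even, real-analytic function of $t$, the $\arctanh$-summand of $v$ becomes $-\tfrac{\sqrt{2}}{4}S(y)\ln t+\tfrac{\sqrt{2}}{2}E(t)S(y)$ with $S(y)=\sinh(\tfrac{y_3-y_1}{\sqrt{2}})\sinh(t)\sinh(\tfrac{y_1+y_3}{\sqrt{2}})$ real-analytic and vanishing to order exactly $3$ at $y=0$. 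The term $E(t)S(y)$ and the $\arctan$-summand of $v$ are real-analytic on all of $\R^3$, so it remains to see that $S(y)\ln t$ — equivalently a homogeneous cubic $G(y)$ times $\ln(y_1+2y_2+y_3)$, the tail $S(y)-G(y)=O(|y|^5)$ contributing only a $C^4$ correction — is $C^2$ on $\R^3_+$. Since $\ell(y):=y_1+2y_2+y_3$ satisfies $\ell(y)\ge c_0|y|$ on $\R^3_+$ while $\partial^\beta G$ is homogeneous of degree $3-|\beta|$ and $\partial^\gamma\ln\ell$ is bounded by $|y|^{-|\gamma|}$ (up to $\ln(1/|y|)$ when $\gamma=0$), Leibniz' rule gives $\partial^\alpha(G\ln\ell)=O(|y|^{3-|\alpha|}\ln(1/|y|))\to0$ for every $|\alpha|\le2$; hence all partials of order $\le2$ extend continuously to the origin and $G\ln\ell\in C^2(\R^3_+)$, whence $v\in C^2(\R^3_+)$ and $U$ has a $C^2$ extension to $\bar\cW_4$. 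I expect this to be the main obstacle: one must see that the logarithmic singularity of $\arctanh$ is always multiplied by a coefficient vanishing to third order, so that only $C^3$ (not $C^2$) regularity fails.

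Next, the boundary relations \eqref{eq:verifbound1}--\eqref{eq:verifbound3}. Differentiating $U(x)=x_4+v(x_2-x_1,x_3-x_2,x_4-x_3)$ gives $\partial_1U=-\partial_1v$, $\partial_2U=\partial_1v-\partial_2v$, $\partial_3U=\partial_2v-\partial_3v$, $\partial_4U=1+\partial_3v$, so \eqref{eq:verifbound1}, \eqref{eq:verifbound2}, \eqref{eq:verifbound3} are respectively equivalent to $\partial_1v-\tfrac12\partial_2v=0$ on $\{y_1=0\}$, $\partial_2v-\tfrac12(\partial_1v+\partial_3v)=0$ on $\{y_2=0\}$, and $\partial_3v-\tfrac12\partial_2v=-\tfrac12$ on $\{y_3=0\}$, i.e.\ precisely the reflection relations \eqref{eq:refl3}, \eqref{eq:refl2}, \eqref{eq:refl1}. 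On each of these faces (away from the diagonal) the argument of $\arctanh$ is strictly below $1$, so $v$ is analytic there and the three identities are verified by substituting $y_i=0$ into the closed form of $v$ and its first partials — a finite elementary computation, also recorded symbolically in \cite{BEYMath}.

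Finally, $u\in C^2(\R^4)$ and the PDE. Since $u(x)=U(x^{(1)},\dots,x^{(4)})$, on each open ordering chamber $u$ is $U$ precomposed with a coordinate permutation, hence real-analytic. Across the wall $\{x_i=x_{i+1}\}$ separating two adjacent chambers, pass to coordinates in which that transposition acts by $q\mapsto-q$ with $q\ge0$ the coordinate normal to the wall pointing into $\cW_4$; then $u$ is even in $q$, and because $U\in C^2(\bar\cW_4)$ with $\partial_qU|_{q=0}=\partial_{i+1}U-\partial_iU=0$ by \eqref{eq:verifbound1}--\eqref{eq:verifbound3}, its even extension is $C^2$. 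At the deeper strata (three or four coordinates coinciding) a short linear-algebra check shows that the conditions $\partial_iU=\partial_{i+1}U$ on the adjacent walls are exactly the conditions for the $2$-jet of $U$ to be invariant under the stabilizer ($S_3$ or $S_4$) acting by permutations, so the $C^2$ pieces glue across the whole big diagonal and $u\in C^2(\R^4)$; combined with the remark after Theorem~\ref{thm:main}, symmetry and \eqref{translate} then yield all the structural properties asserted there. For \eqref{eq:PDEUU}, write $U(x)=x_4+v(Lx)$ with $Lx=(x_2-x_1,x_3-x_2,x_4-x_3)$; since $L(0,1,0,1)^\top=(1,-1,1)^\top$ we get $(0,1,0,1)\,\partial^2U(x)\,(0,1,0,1)^\top=(1,-1,1)\,\partial^2v(Lx)\,(1,-1,1)^\top$ and $U-\Phi=v$ on $\cW_4$, so \eqref{eq:PDEUU} is equivalent to $0=v-\tfrac12(1,-1,1)\,\partial^2v\,(1,-1,1)^\top$ on $(0,\infty)^3$, i.e.\ \eqref{eq:pdev} for the explicit $v$ — a tedious but elementary verification carried out in \cite{BEYMath}.
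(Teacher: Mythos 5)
Your proof is correct and reaches the same conclusions as the paper, but your treatment of the central step---the $C^2$ extension of $U$ across the full diagonal $x_1=x_2=x_3=x_4$---takes a genuinely different and cleaner route. The paper proceeds term by term: it splits $U=G+T$ with $G$ entire, then exhibits the first partials of $T$ as linear combinations of four explicit functions $t_1,\dots,t_4$, and shows each $t_i\to 0$ at the diagonal using inequalities among the $\sinh$ factors, remarking ``similarly'' for the second order. You instead isolate the singular factor once and for all via $\arctanh(e^{-t})=-\tfrac12\ln t+E(t)$ with $E$ even and analytic, observe that the coefficient $S$ of the $\ln t$ term vanishes to order exactly three, and then a single Leibniz/homogeneity count $\partial^\alpha(G\ln\ell)=O(|y|^{3-|\alpha|}\ln(1/|y|))$ handles orders $0,1,2$ simultaneously on $\R^3_+$ (using $\ell\ge c_0|y|$ there). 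This makes visible \emph{why} the extension is exactly $C^2$ but not $C^3$, which the paper's argument leaves implicit. Your derivation of \eqref{eq:verifbound1}--\eqref{eq:verifbound3} from the reflection identities \eqref{eq:refl1}--\eqref{eq:refl3} for the explicit $v$, and of \eqref{eq:PDEUU} from \eqref{eq:pdev}, is the same as the paper's (with both deferring the actual algebraic identities to symbolic computation). Your gluing argument for $u\in C^2(\R^4)$---even reflection in a normal coordinate across each wall, plus the observation that $S_2$-invariance of the $2$-jet on the adjacent walls implies the full $S_3$/$S_4$-invariance needed at deeper strata because transpositions generate---is a more conceptual rephrasing of the paper's explicit list of Hessian identities \eqref{eq:11}--\eqref{eq:cross6}; the two are equivalent. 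One small slip: $\partial_qU|_{q=0}$ equals $\tfrac12(\partial_{i+1}U-\partial_iU)$, not the unscaled difference, but this does not affect the conclusion.
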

\begin{remark}
As needed for the smoothness of $u$, $U$ is symmetric in its variables. 
\end{remark}
\begin{proof}
The main problem with the existence of the extension of $U$ is the fact that the function 
$z\mapsto\arctanh(e^z)$ has a singularity at $0$. Thus, the $C^2$ extension a priori only exists whenever all the components are not equal to each other. 

For the points where all the components are equal to each other we use the fact that $\arctanh(e^z)\sinh(z)\to 0$ as $z\downarrow 0$.
Thus, the last two lines of \eqref{def:U} goes to $0$ as $x$ converges to a point whose components are equal. This shows that there is a continuous extension of $U$ to $\bar \cW_4$. 

To show that the extension is $C^1$ it is now sufficient to show that all partial derivatives admits finite limits as we take the limit to the boundary of $\cW_4$, in particular, when $x_1=x_2=x_3=x_4$. First, we observe that 
\begin{align}\label{eq:G}
    G(x_1,x_2,x_3,x_4)=& x_4-\frac{\sqrt{2}}{4} \sinh(\sqrt{2}(x_4-x_3))\\
&+\frac{\sqrt{2}}{2} \arctan\left(e^{\frac{x_1+x_2-x_3-x_4}{\sqrt{2}}}\right)\cosh\left(\frac{x_1-x_2+x_3-x_4}{\sqrt{2}}\right)\notag\\
&\quad\quad\quad\cosh\left(\frac{-x_1+x_2+x_3-x_4}{\sqrt{2}}\right)\cosh\left(\frac{-x_1-x_2+x_3+x_4}{\sqrt{2}}\right)\notag
\end{align}
is analytic everywhere so we only need to consider the behavior of 
\begin{align*}
    T(x_1,x_2,x_3,x_4)=&\frac{\sqrt{2}}{2}\arctanh\left(e^{\frac{x_1+x_2-x_3-x_4}{\sqrt{2}}}\right)\sinh\left(\frac{x_1-x_2+x_3-x_4}{\sqrt{2}}\right)\\
    &\quad\quad\quad\sinh\left(\frac{-x_1+x_2+x_3-x_4}{\sqrt{2}}\right)\sinh\left(\frac{-x_1-x_2+x_3+x_4}{\sqrt{2}}\right)
\end{align*}
at a point satisfying $x_1=x_2=x_3=x_4$.
By chain rule, the fist order partial derivatives of $U$ are linear combinations of the following 4 terms:
\begin{align*}
    t_1(x_1,x_2,x_3,x_4)=&\arctanh\left(e^{\frac{x_1+x_2-x_3-x_4}{\sqrt{2}}}\right)\sinh\left(\frac{x_1-x_2+x_3-x_4}{\sqrt{2}}\right)\\
    &\quad\quad\quad\sinh\left(\frac{-x_1+x_2+x_3-x_4}{\sqrt{2}}\right)\cosh\left(\frac{-x_1-x_2+x_3+x_4}{\sqrt{2}}\right)\\
    t_2(x_1,x_2,x_3,x_4)=&\arctanh\left(e^{\frac{x_1+x_2-x_3-x_4}{\sqrt{2}}}\right)\sinh\left(\frac{x_1-x_2+x_3-x_4}{\sqrt{2}}\right)\\
    &\quad\quad\quad\cosh\left(\frac{-x_1+x_2+x_3-x_4}{\sqrt{2}}\right)\sinh\left(\frac{-x_1-x_2+x_3+x_4}{\sqrt{2}}\right)\\
    t_3(x_1,x_2,x_3,x_4)=&\arctanh\left(e^{\frac{x_1+x_2-x_3-x_4}{\sqrt{2}}}\right)\cosh\left(\frac{x_1-x_2+x_3-x_4}{\sqrt{2}}\right)\\
    &\quad\quad\quad\sinh\left(\frac{-x_1+x_2+x_3-x_4}{\sqrt{2}}\right)\sinh\left(\frac{-x_1-x_2+x_3+x_4}{\sqrt{2}}\right)\\
    t_4(x_1,x_2,x_3,x_4)=&\frac{e^{\frac{x_1+x_2-x_3-x_4}{\sqrt{2}}}}{1-e^{\sqrt{2}(x_1+x_2-x_3-x_4)}}\sinh\left(\frac{x_1-x_2+x_3-x_4}{\sqrt{2}}\right)\\
    &\quad\quad\quad\sinh\left(\frac{-x_1+x_2+x_3-x_4}{\sqrt{2}}\right)\sinh\left(\frac{-x_1-x_2+x_3+x_4}{\sqrt{2}}\right).
\end{align*}
In $\cW$, as $x_1<x_2<x_3<x_4$,  we have the inequalities 
\begin{align*}
0&\geq x_1-x_2+x_3-x_4 \geq x_1+x_2-x_3-x_4 \\
-x_1-x_2+x_3+x_4&\geq -x_1+x_2+x_3-x_4 \geq x_1+x_2-x_3-x_4.
\end{align*}
Combined with the equality $|\sinh(x)|=\sinh(|x|)$, these inequalities yield
\begin{align*}
    \left|\sinh\left(\frac{x_1-x_2+x_3-x_4}{\sqrt{2}}\right)\right|\leq\left|\sinh\left(\frac{x_1+x_2-x_3-x_4}{\sqrt{2}}\right)\right|\\
     \left|\sinh\left(\frac{-x_1+x_2+x_3-x_4}{\sqrt{2}}\right)\right|\leq\left|\sinh\left(\frac{x_1+x_2-x_3-x_4}{\sqrt{2}}\right)\right|.
\end{align*}
Using the observation that $\arctanh(e^z)\sinh(z)\to 0$ as $z\downarrow 0$ one more time, and the limit
 $\frac{\sinh(z/\sqrt{2})}{1-e^{\sqrt{2}z}} \to \frac{1}{2}$, as $z\downarrow 0$ we can conclude that each of  $t_1$, $t_2$, $t_3$, and $t_4$ $\to 0$ as $x$ converge to a point where components are equal to each other. Thus, we have showed that $T$ has a $C^1$ extension to $\bar \cW_4 $ and in fact all its first order partial derivatives are $0$ on $x_1=x_2=x_3=x_4$.
 
Similarly, using these observations, one can also show that all the second order partial derivatives of $U$ have continuous extension on $x_1=x_2=x_3=x_4$ and all second order partial derivatives of $T$ are $0$ on $x_1=x_2=x_3=x_4$ as well. 

We now use the reflection conditions \eqref{eq:refl1}, \eqref{eq:refl2}, and \eqref{eq:refl3} to show that on the boundaries $x_1=x_2$, $x_2=x_3$, $x_3=x_4$, the first order partial derivatives of $U$ satisfy \eqref{eq:verifbound1}, \eqref{eq:verifbound2}, and \eqref{eq:verifbound3}.

Since $U(x_1,x_2,x_3,x_4)=x_4+v(x_2-x_1,x_3-x_2,x_4-x_3)$ using \eqref{eq:refl3} we obtain that
\begin{align*}
&\pa_1U(x_1,x_1,x_3,x_4)-\pa_2U(x_1,x_1,x_3,x_4)\\
&=-\pa_1v(0,x_3-x_1,x_4-x_3)-\left(\pa_1v(0,x_3-x_1,x_4-x_3)-\pa_2v(0,x_3-x_1,x_4-x_3)\right)\\
&=-2\pa_1v(0,x_3-x_1,x_4-x_3)+\pa_2v(0,x_3-x_1,x_4-x_3)=0.
\end{align*}
Using \eqref{eq:refl2} we obtain
\begin{align*}
&\pa_2U(x_1,x_2,x_2,x_4)-\pa_3U(x_1,x_2,x_2,x_4)\\
&=\pa_1v(x_2-x_1,0,x_4-x_2)-\pa_2v(x_2-x_1,0,x_4-x_2)\\
&\quad\quad-(\pa_2v(x_2-x_1,0,x_4-x_2)-\pa_3v(x_2-x_1,0,x_4-x_2))\\
&=\pa_1v(x_2-x_1,0,x_4-x_2)-2\pa_2v(x_2-x_1,0,x_4-x_2)+\pa_3v(x_2-x_1,0,x_4-x_2)=0.
\end{align*}
On the other hand \eqref{eq:refl1} gives
\begin{align*}
&\pa_3U(x_1,x_2,x_3,x_3)-\pa_4U(x_1,x_2,x_3,x_3)\\
&=\pa_2v(x_2-x_1,x_3-x_2,0)-\pa_3v(x_2-x_1,x_3-x_2,0)-(1+\pa_3v(x_2-x_1,x_3-x_2,0))\\
&=-1+\pa_2v(x_2-x_1,x_3-x_2,0)-2\pa_3v(x_2-x_1,x_3-x_2,0)=0.
\end{align*}

Thus, $U$ has a $C^2$ extension to $\bar \cW_4$, its first order partial derivatives satisfy \eqref{eq:verifbound1}-\eqref{eq:verifbound3} and the first two order of partial derivatives of $T$ are $0$ on $x_1=x_2=x_3=x_4$.

We now show that $u$ defined by \eqref{eq:defuU} or \eqref{eq:u} is $C^2$ on $\R^4$. The smoothness of $U$ and the equalities \eqref{eq:verifbound1}-\eqref{eq:verifbound3} implies that $u$ is $C^1$. 
In order to show that $u$ is $C^2$ we need to show that for any point $x\in\bar \cW_4$ that has two components $x_i,x_j$ equal, the Hessian of $U$ is symmetric in $x_i$ and $x_j$. This is implied by the conditions
\begin{align}
&\pa_{1,1}U(x_1,x_1,x_3,x_4)=\pa_{2,2}U(x_1,x_1,x_3,x_4),\label{eq:11}\\
&\pa_{1,2}U(x_1,x_1,x_3,x_4)=\pa_{2,1}U(x_1,x_1,x_3,x_4),\label{eq:parder1}\\
&\pa_{1,3}U(x_1,x_1,x_3,x_4)=\pa_{2,3}U(x_1,x_1,x_3,x_4),\label{eq:cross1}\\
&\pa_{1,4}U(x_1,x_1,x_3,x_4)=\pa_{2,4}U(x_1,x_1,x_3,x_4),\label{eq:cross2}\\
&\pa_{2,2}U(x_1,x_2,x_2,x_4)=\pa_{3,3}U(x_1,x_2,x_2,x_4),\label{eq:22}\\
&\pa_{2,3}U(x_1,x_2,x_2,x_4)=\pa_{3,2}U(x_1,x_2,x_2,x_4),\label{eq:parder2}\\
&\pa_{2,1}U(x_1,x_2,x_2,x_4)=\pa_{3,1}U(x_1,x_2,x_2,x_4),\label{eq:cross3}\\
&\pa_{2,4}U(x_1,x_2,x_2,x_4)=\pa_{3,4}U(x_1,x_2,x_2,x_4),\label{eq:cross4}\\
&\pa_{3,3}U(x_1,x_2,x_3,x_3)=\pa_{4,4}U(x_1,x_2,x_3,x_3),\label{eq:33}\\
&\pa_{3,4}U(x_1,x_2,x_3,x_3)=\pa_{4,3}U(x_1,x_2,x_3,x_3),\label{eq:parder3}\\
&\pa_{3,1}U(x_1,x_2,x_3,x_3)=\pa_{4,1}U(x_1,x_2,x_3,x_3),\label{eq:cross5}\\
&\pa_{3,2}U(x_1,x_2,x_3,x_3)=\pa_{4,2}U(x_1,x_2,x_3,x_3)\label{eq:cross6}
\end{align}
for $x\in \bar \cW_4$. Thanks to the smoothness of $U$ on $\bar \cW_4$, in fact, we only need these equalities for $x\in \cW_4$.

Note that for $x\in \cW_4$, around each of the points
$$(x_1,x_1,x_3,x_4),\,(x_1,x_2,x_2,x_4),\mbox{ and }(x_1,x_2,x_3,x_3)$$
there exists a neighborhood such that the expression defining $U$ is analytical on this neighborhood. 
Thus, we can apply Schwarz Theorem to obtain \eqref{eq:parder1}, \eqref{eq:parder2} and \eqref{eq:parder3}.
The remaining conditions \eqref{eq:cross1}, \eqref{eq:cross2}, \eqref{eq:cross3}, \eqref{eq:cross4}, \eqref{eq:cross5}, and \eqref{eq:cross6} on cross derivatives are consequences of differentiation of \eqref{eq:verifbound1}-\eqref{eq:verifbound3}.
To show \eqref{eq:11}, we differentiate \eqref{eq:verifbound1} in $x_1$ then subtract \eqref{eq:parder1} to obtain 
$$\pa_{1,1}U(x_1,x_1,x_3,x_4)=\pa_{2,2}U(x_1,x_1,x_3,x_4).$$
Repeating the same procedure with \eqref{eq:verifbound2}, $x_2$ and \eqref{eq:parder2} then with \eqref{eq:verifbound3}, $x_3$ and \eqref{eq:parder3} we obtain \eqref{eq:22} and \eqref{eq:33} which concludes the proof. 
\end{proof}

\subsection{Proof of Theorem \ref{thm:main}}

The expansion of $u$, in \eqref{eq:expofu}, can be found by taking the second order Taylor expansion of $G$ defined in \eqref{eq:G}\footnote{The code of the computation is available in \cite{BEYMath}.}. Note that as discussed in the proof of Proposition \ref{prop:regularity}, the first two derivatives of $u$ and $G$ are equal at $0$ and hence the lack of smoothness of the $\arctanh$ does not contribute to the second order derivative at the origin.

We now show that $U$ defined in \eqref{def:U} solves \eqref{eq:pdegeo} on $\cW_4$ which implies by continuity of the derivatives  that $u$ solves the same PDE on $\R^4$. 
By direct computation\footnote{The code of the computation is available in \cite{BEYMath}} we have that for all $x\in \cW_4$ we have
\begin{align*}
0&=U(x)-\Phi(x)-\frac{1}{2}\begin{pmatrix}
0\\
1\\
0\\
1
\end{pmatrix}^T \pa^2 U(x)\begin{pmatrix}
0\\
1\\
0\\
1
\end{pmatrix},\\
0&=U(x)-\Phi(x)-\frac{1}{2}\begin{pmatrix}
0\\
1\\
1\\
0
\end{pmatrix}^T \pa^2 U(x)\begin{pmatrix}
0\\
1\\
1\\
0
\end{pmatrix}.
\end{align*} 
The function $U$ also satisfies the equality \eqref{translate}. Using its smoothness, we obtain 
\begin{align}\label{eq:deriv}
1=\frac{U(x+\lambda (e_1+e_2+e_3+e_4))-U(x)}{\lambda}\to \sum_{i=1}^4 \pa_i U(x)\mbox{ as }\lambda \to 0.
\end{align}
Note that   
$1= \sum_{i=1}^4 \pa_i U(x)$
implies  
$$\pa^2 U(x)(e_1+e_2+e_3+e_4)=0\mbox{ for all }x\in \R^4.$$
Therefore, for all $J\in P(N)$, we have that 
$$e_{J}^\top \pa^2 U(x)e_{J}-e_{J^c}^\top \pa^2 U(x)e_{J^c}=(e_{J}^\top-e_{J^c}^\top) \pa^2 U(x)(e_{J}+e_{J^c})=0.$$
Thus, if $J$ is a maximizer of the Hamiltonian $\sup_{J\in P(N)} e_J^\top \pa^2 u(x)e_J$ then its complement $J^c$ is also a maximizer of the same Hamiltonian.
This means that in order to show that the comb strategy (and also the strategy that chooses the second and the third leading expert) is optimal it is sufficient to show that the functions $U_1,...,U_6$ defined by
\begin{align*}
U_1(x):= U(x)-\Phi(x)-\frac{1}{2}\begin{pmatrix}
0\\
0\\
0\\
0
\end{pmatrix}^T \pa^2 U(x)\begin{pmatrix}
0\\
0\\
0\\
0
\end{pmatrix},\\
 U_2(x):= U(x)-\Phi(x)-\frac{1}{2}\begin{pmatrix}
0\\
0\\
0\\
1
\end{pmatrix}^T \pa^2 U(x)\begin{pmatrix}
0\\
0\\
0\\
1
\end{pmatrix},\\
U_3(x):=U(x)-\Phi(x)-\frac{1}{2}\begin{pmatrix}
0\\
0\\
1\\
0
\end{pmatrix}^T \pa^2 U(x)\begin{pmatrix}
0\\
0\\
1\\
0
\end{pmatrix},
\\ 
U_4(x):= U(x)-\Phi(x)-\frac{1}{2}\begin{pmatrix}
0\\
0\\
1\\
1
\end{pmatrix}^T \pa^2 U(x)\begin{pmatrix}
0\\
0\\
1\\
1
\end{pmatrix},\\
U_5(x):= U(x)-\Phi(x)-\frac{1}{2}\begin{pmatrix}
0\\
1\\
0\\
0
\end{pmatrix}^T \pa^2 U(x)\begin{pmatrix}
0\\
1\\
0\\
0
\end{pmatrix},
\\
U_6(x):= U(x)-\Phi(x)-\frac{1}{2}\begin{pmatrix}
0\\
1\\
1\\
1
\end{pmatrix}^T \pa^2 U(x)\begin{pmatrix}
0\\
1\\
1\\
1
\end{pmatrix},
\end{align*}
are non-negative. 
We study each term separately. For the first term we have 
\begin{align*}
U_1(x)= U(x)-\Phi(x)-\frac{1}{2}\begin{pmatrix}
0\\
0\\
0\\
0
\end{pmatrix}^T \pa^2 U(x)\begin{pmatrix}
0\\
0\\
0\\
0
\end{pmatrix}=v(x^{(2)}-x^{(1)},x^{(3)}-x^{(2)},x^{(4)}-x^{(3)})\geq 0
\end{align*}
due to the definition of $v$. 
Additionally we have the following identities for $x\in \cW_4$ that can be computed via Mathematica\footnote{The code of the computation is available in \cite{BEYMath}}. 
\begin{align}
\notag U_4\left(\frac{x}{\sqrt{2}}\right)=&\frac{e^{x_4-x_2}(e^{2x_1}-e^{2x_3})(e^{2x_3}-e^{2x_2})}{2\sqrt{2}(e^{2(x_1+x_2)}-e^{2(x_3+x_4)})}\geq 0,\\
\notag\frac{\sqrt{2}(U_3(\sqrt{2}{x})-U_2(\sqrt{2}{x}))}{\sinh(2(x_3-x_4))}=&1-\arctanh(e^{x_1+x_2-x_3-x_4})\cosh(x_1+x_2-x_3-x_4)\\
\label{eq:u3}&+\arctan(e^{x_1+x_2-x_3-x_4})\sinh(x_1+x_2-x_3-x_4)\\
\notag\frac{\sqrt{2}(U_5(\sqrt{2}{x})-U_3(\sqrt{2}{x}))}{\sinh(2(x_2-x_3))}=&-\arctanh(e^{x_1+x_2-x_3-x_4})\cosh(x_1-x_2-x_3+x_4)\\
\label{eq:u5}&+\arctan(e^{x_1+x_2-x_3-x_4})\sinh(x_1-x_2-x_3+x_4)\\
\notag\frac{\sqrt{2}(U_6(\sqrt{2}{x})-U_3(\sqrt{2}{x}))}{\sinh(2(x_3-x_1))}=&\arctanh(e^{x_1+x_2-x_3-x_4})\cosh(x_1-x_2+x_3-x_4)\\
\label{eq:u6}&+\arctan(e^{x_1+x_2-x_3-x_4})\sinh(x_1-x_2+x_3-x_4).
\end{align}
Due to $x\in \cW_4$, $U_4(x)\geq 0$. 
Additionally, the function 
$$x\geq 0\mapsto 1-\arctanh(e^{-x})\cosh(-x)+\arctan(e^{-x})\sinh(-x)$$
is non-positive.
Thus $$U_3\geq U_2.$$
Finding the sign of the right hand side of \eqref{eq:u5} and \eqref{eq:u6} is equivalent to finding the signs of 
$$-\arctanh(e^{-x})\cosh(-x+y)+\arctan(e^{-x})\sinh(-x+y),\mbox{ for }x,y\geq 0$$
and 
$$\arctanh(e^{-x})\cosh(-x+y)+\arctan(e^{-x})\sinh(-x+y),\mbox{ for }x,y\geq 0.$$
These functions are respectively non-positive and non-negative due to the fact that $\arctanh(e^{-x})\geq \arctan(e^{-x})\geq 0$ and $\cosh(x)\geq |\sinh(x)|$. 
Thus $$U_5\geq U_3\mbox{ and } U_6\geq U_3.$$
Finally, to finish the proof of the main theorem, it is sufficient to show that 
\begin{align}\label{eq:ineku2}U_2\geq 0.\end{align}
To show this inequality, it is more convenient to write $U_2$ as in terms of $v$.
Thanks to \eqref{def:U}, 
$$U_2(x)=v(x_2-x_1,x_3-x_2,x_4-x_3)-\frac{1}{2}\begin{pmatrix}
0\\
0\\
1
\end{pmatrix}^T \pa^2 v(x_2-x_1,x_3-x_2,x_4-x_3)\begin{pmatrix}
0\\
0\\
1
\end{pmatrix},
$$
and to show \eqref{eq:ineku2}, it is sufficient to show that for all $y_1,y_2,y_3\geq 0$, 
$$v_2(y_1,y_2,y_3):=v(y_1,y_2,y_3)-\frac{1}{2}\begin{pmatrix}
0\\
0\\
1
\end{pmatrix}^T \pa^2 v(y_1,y_2,y_3)\begin{pmatrix}
0\\
0\\
1
\end{pmatrix}\geq 0.
$$
Thanks to the smoothness of $v$ on $(0,\infty)^3$ and the fact that the data of \eqref{eq:pdev} is constant, we can differentiate \eqref{eq:pdev} to obtain that 
$v_2$ also solves \eqref{eq:pdev}. Thanks to the maximum principle for this PDE, in order to show \eqref{eq:pdev}, it is sufficient to show that $v_2\geq 0$ for $y_1=0$ or $y_2=0$ or $y_3=0$. Our objective is to use the Proposition \ref{prop:comp}. Similarly to \eqref{eq:deffrh1}-\eqref{eq:deffrh4} define 
\begin{align*}
\tilde f(x,y)&= v_2\left(0,\frac{x}{\sqrt{2}},\frac{y}{\sqrt{2}}\right),\\
\tilde r_1(x,y)&= v_2\left(\frac{x}{\sqrt{2}},0,\frac{y+x}{\sqrt{2}}\right),\\
\tilde h(x,y)&= v_2\left(\frac{y}{\sqrt{2}},\frac{x}{\sqrt{2}},0\right),\\
\tilde r_2(x,y)&= v_2\left(\frac{x+y}{\sqrt{2}},0,\frac{x}{\sqrt{2}}\right).
\end{align*}
By direct computation via Mathematica\footnote{The code of the computation is available in \cite{BEYMath}}, these functions satisfy, 
\begin{align*}
(\pa_x-2\pa_y)\tilde f(x,y)&=\frac{2}{\tanh{x}}\tilde f(x,y)-\frac{2}{\sinh{x}}\tilde r_1(x,y),\\
\pa_x \tilde r_1(x,y)&=-\frac{2}{\sinh{x}} \tilde f(x,y)+\frac{2}{\tanh{x}}\tilde r_1(x,y),\\
(\pa_x-2\pa_y)\tilde h(x,y)&=\frac{2}{\tanh{x}} \tilde h(x,y)-\frac{2}{\sinh{x}} \tilde r_2(x,y)\\
&+\frac{1}{\sqrt{2}}\left(1-\arctanh(e^{-x-y/2})\cosh(x+y/2)-\arctan(e^{-x-y/2})\sinh(x+y/2))\right),\\
\pa_x \tilde r_2(x,y)&=-\frac{2}{\sinh{x}} \tilde h(x,y)+\frac{2}{\tanh{x}} \tilde r_2(x,y).
\end{align*}
Since the function $$x\geq 0 \mapsto 1-\arctanh(e^{-x})\cosh(x)-\arctan(e^{-x})\sinh(x)$$
is non-positive, we have that 
 \begin{align*}
(\pa_x-2\pa_y)\tilde f(x,y)&=\frac{2}{\tanh{x}}\tilde f(x,y)-\frac{2}{\sinh{x}}\tilde r_1(x,y),\\
\pa_x \tilde r_1(x,y)&=-\frac{2}{\sinh{x}} \tilde f(x,y)+\frac{2}{\tanh{x}}\tilde r_1(x,y),\\
(\pa_x-2\pa_y)\tilde h(x,y)&\leq \frac{2}{\tanh{x}} \tilde h(x,y)-\frac{2}{\sinh{x}} \tilde r_2(x,y),\\
\pa_x \tilde r_2(x,y)&=-\frac{2}{\sinh{x}} \tilde h(x,y)+\frac{2}{\tanh{x}} \tilde r_2(x,y).
\end{align*}
Thus, to finish the proof of the main result by application of Proposition \ref{prop:comp}, we need to control 
$\tilde f$ and $\tilde h$ on the boundary of their domain of definition and obtain the limit of $\tilde r_1$ and $\tilde r_2$ at infinity. 
{Note that $\tilde r_1$ and $\tilde r_2$ converge to $0$ at infinity.
By a direct computation\footnote{The code of the computation and the expressions for the functions are available in \cite{BEYMath}.}, we have that 
\begin{align*}
\tilde f(x,y)=&F_f(x,y)+\left(\arctan(e^{-x-\frac{y}{2}})-e^{-x-\frac{y}{2}}+\frac{e^{-3x-\frac{3y}{2}}}{3}\right)H_f(x,y)\\&+\left(\arctanh(e^{-x-\frac{y}{2}})-e^{-x-\frac{y}{2}}-\frac{e^{-3x-\frac{3y}{2}}}{3}\right)G_f(x,y),\\
\tilde h(x,y)=&F_h(x,y)+\left(\arctan(e^{-x-\frac{y}{2}})-e^{-x-\frac{y}{2}}+\frac{e^{-3x-\frac{3y}{2}}}{3}\right)H_h(x,y)\\
&+\left(\arctanh(e^{-x-\frac{y}{2}})-e^{-x-\frac{y}{2}}-\frac{e^{-3x-\frac{3y}{2}}}{3}\right)G_h(x,y),
\end{align*}
where as $x^2+y^2\to \infty$,
\begin{align*}
&F_f(x,y):=\frac{e^{-2(2x+y)}\left(4\cosh(y)+6\csch(2x+y)\sinh^2(x)-\sinh(y)\right)}{24\sqrt{2}}=o(1),\\
&H_f(x,y):=\frac{3\cosh(x-\frac{y}{2})+6\cosh(x+\frac{y}{2})-5\cosh(x+\frac{3y}{2})}{16\sqrt{2}}=o(e^{5x+\frac{5y}{2}}),\\ &G_f(x,y):=\frac{3\sinh(x-\frac{y}{2})-6\sinh(x+\frac{y}{2})-5\sinh(x+\frac{3y}{2})}{16\sqrt{2}}=o(e^{5x+\frac{5y}{2}}),\\
&F_h(x,y):=\frac{e^{-2(2x+y)}\left(4+3\coth(2x+y)-3\cosh(y)\csch(2x+y)\right)}{24\sqrt{2}}=o(1),\\
&H_h(x,y):=\frac{(-1+3\cosh(y))\cosh(x+\frac{y}{2})}{8\sqrt{2}}=o(e^{5x+\frac{5y}{2}}),\\
&G_h(x,y):=\frac{(1+3\cosh(y))\sinh(x+\frac{y}{2})}{8\sqrt{2}}=o(e^{5x+\frac{5y}{2}}).
\end{align*}
Given also the expansions at $0$
$$\arctan(x)=x-\frac{x^3}{3}+O(x^5),\quad \arctanh(x)=x+\frac{x^3}{3}+O(x^5),$$
we have that 
$$\lim_{x^2+y^2\to \infty}\tilde f(x,y)=\lim_{x^2+y^2\to \infty}\tilde h(x,y)=0.$$
} Additionally, 
\begin{align*}
&\tilde f(x,0)=\tilde h(x,0)= v_2(0,x/\sqrt{2},0)\\
&=\frac{1}{8\sqrt{2}}\left(2\arctan(e^{-x})\cosh(x)-4\arctanh(e^{-x})\sinh(x)+\tanh(x)\right),\\
&\tilde f(0,y)= \frac{5}{8 \sqrt{2} }e^y (-1 + \coth(y)) \sinh^2(y)+\frac{1}{16 \sqrt{2} }e^y (-1 + \coth(y)) \sinh(y) \\
&\left(\arctan(e^{-y/2}) (9 \cosh(y/2) - 5 \cosh(3 y/2) - 
   \arctanh(e^{-y/2}) (9 \sinh(y/2) + 5 \sinh(3 y/2))\right),\\
&\tilde h(0,y)= \frac{1}{16 \sqrt{2}}e^y (-1 + \coth(y)) \sinh(
  y) \\
  &\left(\arctan(e^{-y/2}) (\cosh(y/2) + 3 \cosh(3 y/2)) + 
   \arctanh(e^{-y/2}) (\sinh(y/2) - 3 \sinh(3 y/2))\right).
\end{align*}
These functions are all non-negative. 
Direct application of Proposition \ref{prop:comp} then yields 
$$\tilde f, \tilde h, \tilde r_1,\tilde r_2\geq 0\mbox{ on }[0,\infty)^2.$$
Thus, for all $x\in \cW_4$ we have
$$U(x)-\frac{1}{2}\sup_{J\in P(N)} e_J^\top \pa^2 U(x)e_J= \Phi(x).$$
Thanks to the smoothness and symmetry of $u$, we obtain \eqref{eq:pdegeo}.

\subsection{Proof of Theorem \ref{thm:asyopt}}\label{sec:profossrst}
We first prove the asymptotics for $\underline u^\d$. This function satisfies the dynamic programming principle
\begin{align*}
\underline  u^\d(x)&=\delta \Phi(x)
\\&+\frac{1-\d}{2}\inf_{\a\in\cU} \left(\underline  u^\d(x+\sqrt{\d}e_{\cJ_\cC(x)})-\a(\cJ_\cC(x))+\underline  u^\d(x+\sqrt{\d}e_{\cJ_\cC^c(x)})-\a(\cJ_\cC^c(x))\right)\\
&=\delta \Phi(x)+\frac{1-\d}{2} \left(\underline  u^\d(x+\sqrt{\d}e_{\cJ_\cC(x)})+\underline u^\d(x+\sqrt{\d}e_{\cJ_\cC^c(x)})-1\right)
\end{align*}
This is equivalent to 
\begin{align*}
\underline u^\d(x)= \Phi(x)+\frac{1-\d}{2\d} \left(\underline  u^\d(x+\sqrt{\d}e_{\cJ_\cC(x)})+\underline u^\d(x+\sqrt{\d}e_{\cJ_\cC^c(x)})-1-2\underline  u^\d(x)\right).
\end{align*}
Similarly to $u$ and $V^\d$, 
$$\underline  u^\d(x+\sqrt{\d}\lambda \sum_{i=1}^4 e_i)=\underline  u^\d(x)+\lambda\mbox{ for all }\lambda \in \R .$$
Thus, 
$$\underline u^\d(x+\sqrt{\d}e_{\cJ_\cC^c(x)})-1=\underline  u^\d(x+\sqrt{\d}e_{\cJ_\cC^c(x)}-\sqrt{\d}\sum_{i=1}^4 e_i)=\underline  u^\d(x-\sqrt{\d}e_{\cJ_\cC(x)})$$ and the DPP becomes 
\begin{align*}
\underline u^\d(x)= \Phi(x)+\frac{1-\d}{2\d} \left(\underline u^\d(x+\sqrt{\d}e_{\cJ_\cC(x)})+\underline  u^\d(x-\sqrt{\d}e_{\cJ_\cC(x)})-2\underline  u^\d(x)\right).
\end{align*}
Due the fact that $\cJ^b_\cC$ is balanced, $\underline u^\d$ in fact does not depend on $  \a\in \cU$. Thus, by choosing a particular control we can prove similarly to the proof of \cite[Theorem 7]{MR3768426} that  
$\underline  u^\d$ converges to the unique viscosity solution of the equation 
$$f(x)-\frac{1}{2} e_{\cJ_\cC(x)}^\top \pa^2 f(x)e_{\cJ_\cC(x)}= \Phi(x)$$
with linear growth. 
Note that thanks to \eqref{eq:PDEUU}, $u$ also solves this PDE and has linear growth. Thus, comb strategies are asymptotically optimal and $\underline  u^\d(x)\to u(x)$ as $\d\downarrow 0$. 
\section{Concluding Remarks}
Using a system of first order hyperbolic PDE, \eqref{syst:hf}-\eqref{syst:hr}, we characterize and compute the expectation \eqref{eq:defv} of the third component of the local time of an obliquely reflected Brownian motion in the first octant. Then, using a maximum principle in Proposition \ref{prop:comp}, we show that this value provides a solution to the Hamilton-Jacobi-Bellman equation \eqref{eq:pdegeo} that characterizes the long time behavior of a regret minimization problem with $4$ experts. Finally, we prove that, as conjectured in \cite{MR3478415}, comb strategies are asymptotically optimal for the nature.

We conjecture that this methodology can be performed for $N\geq 5 $ experts. The starting point of our computation is the Proposition \ref{prop:diag} where we compute the value in an invariant set for the flow of the obliquely reflected Brownian motion. Similarly, to follow such a methodology for $N\geq 5$, one needs to compute the invariant sets of the obliquely reflected Brownian motion in the positive orthant of dimension $N-1$ and compute the expectations in these sets. Since they are consequence of the flow property for the reflected Brownian, one can expect that for $N\geq 5$, a first order system similar to \eqref{syst:hf}-\eqref{syst:hr} can be established to compute the value on the faces of the first orthant using the value in the invariant sets of the obliquely reflected Brownian motion. 

We also mention that one can use our methodology to study the parabolic version of \eqref{eq:pdegeo} which corresponds to the long time behavior of the game with deterministic stopping. In this case, we expect that the system \eqref{syst:hf}-\eqref{syst:hr} has an additional time dependence. 

\bibliography{ref}{}
\bibliographystyle{plain}

\end{document}